\documentclass[a4paper,12pt,reqno]{amsart}
\usepackage{amssymb,amsmath,amsthm}
\usepackage{esint,hyperref}
\usepackage[utf8]{inputenc}
\numberwithin{equation}{section}
\allowdisplaybreaks

\newcommand{\dd}{\mathop{}\!\mathrm{d}}

\newtheorem{thm}{Theorem}[section]
\theoremstyle{definition}
\newtheorem{example}[thm]{Example}
\theoremstyle{plain}
\newtheorem{backgroundthm}{Theorem}

\newtheorem{prop}[thm]{Proposition}
\newtheorem{lem}[thm]{Lemma}
\newtheorem{cor}[thm]{Corollary}
\newtheorem{defn}[thm]{Definition}
\newtheorem*{question}{Question}
\newcounter{step}
\newenvironment{step}[1][]
{\refstepcounter{step}\par\medskip\noindent
	\textbf{Step~\thestep.}\hspace*{0.5em}\textit{#1}\par\smallskip}
{\par\medskip}

\title{Ricci-pinched 3-Manifolds with second-fundamental-form-pinched Boundary}

\author{Chao Xia}
\address{(C.X.) School of Mathematical Sciences,
Xiamen University, 361005, Xiamen, P.~R.~China}
\email{chaoxia@xmu.edu.cn}
\thanks{C.X. is supported by NSFC (Grant No. 12271449, 12671069, 12526203, 12526102) and the Natural Science Foundation of Fujian Province of China (Grant No. 2024J011008).}

\author{Jincong Zheng}
\address{(J.Z.) School of Mathematical Sciences,
Xiamen University, 361005, Xiamen, P.~R.~China}
\email{zhengjincong@stu.xmu.edu.cn}

\begin{document}
	\begin{abstract}
		We prove a boundary analogue of the rigidity theorem for
		Ricci-pinched three-manifolds under a superquadratic volume-growth
		assumption.  More precisely, we show that a complete connected
		three-manifold with convex boundary, pinched Ricci tensor,
		and pinched second fundamental form is isometric to the Euclidean
		half-space.  The proof is potential-theoretic.  We construct and
		study mixed Dirichlet--Neumann $p$-capacitary potentials, establish
		monotonicity formulas for their free-boundary level sets, and use a
		weak Gauss--Bonnet formula adapted to this setting.  
	\end{abstract}

	\maketitle
	
	\section{Introduction}
	A Riemannian manifold $(M,g)$ is called \emph{Ricci-pinched} if
	$\operatorname{Ric}\ge0$ and, for some $\rho>0$,
	\begin{equation}\label{eq:intro-Ricci-pinching}
		\operatorname{Ric}\ge \rho\,\mathrm{R}g.
	\end{equation}
	Here $\operatorname{Ric}$ and $\mathrm R$ denote the Ricci and scalar
	curvatures.  Hamilton's Ricci-flow theorem for compact
	three-manifolds with positive Ricci curvature~\cite{Ham82} led to the
	following scale-invariant flat-or-compact conjecture; see
	\cite[Conjectures~1.1 and~1.2]{Lot24} for its formulation and
	attribution. Hamilton's pinching conjecture was resolved through a sequence of
	Ricci-flow developments.  Chen--Zhu~\cite{CZ00} proved the
	flat-or-compact conclusion under the additional assumptions of
	bounded curvature and nonnegative sectional curvature.
	Lott~\cite{Lot24} retained bounded curvature but relaxed
	nonnegativity of sectional curvature to an inverse-quadratic lower
	bound, allowing a negative part that decays quadratically at infinity.
    Building on Lott's work, Deruelle--Schulze--Simon~\cite{DSS25}
	removed the additional lower sectional-curvature bound and proved
	the conjecture assuming only bounded curvature. Finally,
	Lee--Topping~\cite{LT25} removed the remaining bounded-curvature assumption by constructing a complete Ricci flow from arbitrary complete noncompact Ricci-pinched initial data, thereby completely proving the following result.
	
	\begin{backgroundthm}\label{thm:intrinsic-background}
		Let $(M,g)$ be a complete, connected $3$-manifold without
		boundary.  Suppose that $M$ is \emph{Ricci-pinched}.  Then $M$
		is flat or compact.
	\end{backgroundthm}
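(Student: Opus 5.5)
We argue by contradiction using Ricci flow, following the route of \cite{CZ00,Lot24,DSS25,LT25}. Suppose $(M,g)$ is complete, connected, noncompact, Ricci-pinched with constant $\rho>0$, and not flat. The plan has three steps: run a Ricci flow $g(t)$, $t\in[0,\infty)$, with $g(0)=g$ that keeps the pinching; show the flow is Type~III with positive asymptotic volume ratio; then show that its parabolic blow-down is a nonflat pinched expanding soliton, which is impossible.

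\textbf{Step 1: existence and preservation of pinching.} We need a complete Ricci flow $g(t)$ on $M\times[0,\infty)$ with $g(0)=g$ and $|\mathrm{Rm}|_{g(t)}\le C/t$. For bounded curvature this is Shi's short-time existence. In general we use the construction of~\cite{LT25}: exhaust $M$ by domains, conformally blow the metric up near their boundaries to get complete bounded-curvature metrics, run Shi's flow on each, and pass to a limit. Two ingredients make the limit work:
\begin{itemize}
\item in dimension three the condition $\operatorname{Ric}\ge\rho\,\mathrm R g$ is preserved by Hamilton's maximum principle for the Ricci tensor, including its local form on the approximating flows;
\item the pinching gives local $c/t$ curvature estimates and a lower bound on volumes of unit balls, which are uniform in the approximation.
\end{itemize}
From these we also get $\mathrm R>0$ for $t>0$ by the strong maximum principle, since $g$ is not flat.

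\textbf{Step 2: long-time behaviour.} Pinching together with $\mathrm{Ric}\ge0$ gives $|\mathrm{Rm}|\le C\,\mathrm R$. Hamilton's pinching-improvement estimate shows that $|\mathrm{Ric}-\tfrac13\mathrm R g|^2/\mathrm R^{2-\delta}$ is controlled. Following~\cite{Lot24,DSS25}, we combine this with the Type~III bound $t\,\mathrm R\le C$ to show two things:
\begin{itemize}
\item the flow exists for all time;
\item the asymptotic volume ratio $\theta=\lim_{r\to\infty}\operatorname{Vol}B(x,r)/r^3$ is positive and independent of $t$.
\end{itemize}
Positivity of $\theta$ comes from the non-collapsing in Step~1 together with Bishop--Gromov for $\operatorname{Ric}\ge0$.

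\textbf{Step 3: blow-down and rigidity.} Consider the rescaled flows $g_s(t)=s^{-1}g(st)$ as $s\to\infty$. By the $c/t$ bound and $\theta>0$, Hamilton compactness gives a subsequential limit flow $g_\infty(t)$, $t>0$. This limit comes out of the asymptotic cone of $(M,g)$, is still Ricci-pinched, and, using Perelman/Hamilton monotonicity of an entropy-type quantity, is an expanding gradient soliton.

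For a pinched expanding soliton, $\mathrm R$ decays, and the soliton identity combined with $\operatorname{Ric}\ge\rho\,\mathrm R g$ forces $\mathrm R\equiv0$. Hence the limit is flat and the cone is $\mathbb R^3$, so $\theta=\omega_3$. By rigidity in Bishop--Gromov this forces $g$ to be flat, which is the desired contradiction.

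\textbf{Main obstacle.} I expect the hard part to be Step~1 when the curvature of $g$ is unbounded. The difficulty is obtaining the $c/t$ curvature bound and the preservation of pinching uniformly in the approximating flows, without any a priori global control at infinity. This is exactly what~\cite{LT25} supplies. The soliton rigidity in Step~3 should, by contrast, be a comparatively direct maximum-principle argument.
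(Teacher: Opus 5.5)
The paper gives no proof of its own here; it quotes the theorem from the Ricci-flow literature \cite{CZ00,Lot24,DSS25,LT25}, and your outline follows that route. The genuine gap is in Step~2, where you assert that $\theta=\lim_{r\to\infty}r^{-3}\mathrm{Vol}\,B(x,r)>0$ ``comes from the non-collapsing in Step~1 together with Bishop--Gromov''. That inference is invalid. Bishop--Gromov only says that $r^{-3}\mathrm{Vol}\,B(x,r)$ is nonincreasing, so a lower bound at unit scale says nothing about the limit. The flat manifold $\mathbb{R}^2\times S^1$ is Ricci-pinched, uniformly non-collapsed at unit scale, and has $\theta=0$. Shrinking its circle also shows that pinching gives no unit-scale volume bound, so the one you invoke in Step~1 is not available either.

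Everything in your Step~3 rests on $\theta>0$. You need it for non-collapsed Hamilton compactness of $g_s$, and for the Cheeger--Colding cone structure of the asymptotic cone out of which the limit flow is supposed to emerge. You need it again for the closing Bishop--Gromov rigidity at $\theta=\omega_3$. Excluding collapse at infinity for a nonflat pinched manifold is the real content of the theorem. It is the kind of volume-growth hypothesis that the potential-theoretic proofs \cite{HK24,BMPO,BPO}, and the present paper, have to impose. It must come from a global argument that uses nonflatness and the pinching through the large-time behaviour of the flow, not from local estimates. Your ``main obstacle'' is therefore misplaced. For bounded curvature, short-time existence and preservation of the pinching are classical (Shi, Hamilton), yet that case was only settled in \cite{DSS25}. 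The difficulty sits in your Steps~2--3, not in Step~1.

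Two further ingredients do not work as written.

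First, Hamilton's estimate $|\mathring{\operatorname{Ric}}|^2\le C\mathrm{R}^{2-\delta}$ is not scale invariant. For $g_s(t)=s^{-1}g(st)$ it becomes $|\mathring{\operatorname{Ric}}_s|^2\le Cs^{\delta}\mathrm{R}_s^{2-\delta}$, which degenerates as $s\to\infty$. It therefore contributes nothing to the blow-down.

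Second, the soliton identity plus pinching does not give $\mathrm{R}\equiv0$ by a direct maximum principle. The identity only makes $\mathrm{R}$ superharmonic for the drift Laplacian. That is compatible with $\mathrm{R}>0$, as on the positively curved expanders coming out of cones over spheres of curvature $>1$. The pinching has to be used at infinity: it passes to the asymptotic cone, where the radial Ricci eigenvalue vanishes, forcing $\mathrm{R}=0$ there and hence a flat cone. Making this rigorous needs curvature control of the soliton at infinity. Identifying the blow-down as a gradient expander is itself a nontrivial step once you have only Ricci pinching rather than nonnegative curvature operator.
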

	
	This is a scale-invariant counterpart of Bonnet--Myers: no uniform
	positive lower bound for $\operatorname{Ric}$ is assumed.  Instead,
	\eqref{eq:intro-Ricci-pinching} says that at every point either
	$\operatorname{Ric}=0$, or all Ricci eigenvalues are positive and
	quantitatively comparable.
	Under additional global assumptions, the same rigidity has also been
	obtained through several different approaches.  Under the additional
	assumption that the manifold
	has a pole, Xu~\cite{Xu24} proved that a complete noncompact
	Ricci-pinched $3$-manifold is flat.  Under maximal (Euclidean) volume
	growth,
	Huisken--Koerber~\cite{HK24} used weak inverse mean curvature flow,
	while Benatti--Mantegazza--Oronzio--Pluda~\cite{BMPO} used a harmonic
	potential; more recently, Chen--Xu--Zhang~\cite[Corollary~1.10]{CXZ26}
	obtained the same rigidity from an integral formula involving the
	Green's function.  Building on the harmonic-potential approach,
    Benatti--Le\'on Quir\'os--Oronzio--Pluda~\cite{BPO} developed a
    nonlinear $p$-capacitary argument applicable under superquadratic
    volume growth.  
	Higher-dimensional results are naturally formulated using PIC1
	pinching.  Under the additional assumption of positive asymptotic
	volume ratio, Deruelle--Schulze--Simon~\cite[Theorem~1.3]{DSS24}
	proved that every complete PIC1-pinched manifold is isometric to
	Euclidean space.  The full flat-or-compact rigidity was resolved through
	the work of
	Lee--Topping~\cite{LTPIC1} and its subsequent extension by
	Deruelle--Lee--Schulze--Simon--Topping~\cite{DLSST26}.
	
	The extrinsic analogue replaces the Ricci tensor by the second
	fundamental form.  Hamilton~\cite{Ham94} used the quasiconformal
	Gauss map to prove a compactness theorem for strictly convex
	hypersurfaces with pinched second fundamental form.
	Ni~\cite{Ni25} gave a detailed alternative proof of Hamilton's
	compactness theorem for strictly convex hypersurfaces.
	
	\begin{backgroundthm}[Hamilton]\label{thm:extrinsic-background}
		Let $\Sigma^n\subset\mathbb{R}^{n+1}$, $n\ge2$, be a smooth,
		complete, connected, strictly convex hypersurface without boundary,
		bounding a region.  Suppose that
		\[
		\operatorname{II}\ge\lambda H g
		\]
		for some constant $\lambda>0$, where $g$ is the induced metric
		and $H=\operatorname{tr}_g\operatorname{II}$.  Then $\Sigma$ is compact.
	\end{backgroundthm}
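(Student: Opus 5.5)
We argue by contradiction: assume $\Sigma$ is noncompact, and show that the Gauss map $\nu\colon\Sigma\to\mathbb{S}^n$ must then be constant, which contradicts strict convexity. Three facts combine. First, the pinching $\operatorname{II}\ge\lambda Hg$ makes $\nu$ a quasiregular map. Second, noncompactness confines the Gauss image to a hemisphere. Third, the intrinsic geometry of $\Sigma$ is parabolic in the nonlinear ($n$-capacity) sense. A quasiregular map from an $n$-parabolic manifold into a bounded region of $\mathbb{R}^n$ is then forced to be constant.

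\emph{Step 1 (quasiregularity).} Since $\dd\nu=\operatorname{II}$ via the shape operator, the principal curvatures $\kappa_1\le\dots\le\kappa_n$ satisfy $\lambda H\le\kappa_i\le H$. So at each point the singular values of $\dd\nu$ have ratio at most $\lambda^{-1}$, and $\nu$ is $K$-quasiregular with $K=K(n,\lambda)$. Strict convexity gives $\det\dd\nu=\prod\kappa_i>0$, so $\nu$ is an orientation-preserving local diffeomorphism. \emph{Step 2 (Gauss image).} By Sacksteder/Hadamard, $\Sigma=\partial\Omega$ for a convex body $\Omega$. If $\Sigma$ is noncompact, $\Omega$ contains a ray $x_0+te$, $t\ge0$, and then $\langle\nu,e\rangle\le0$ on $\Sigma$ for the outer normal. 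I will sharpen this to a strict inequality, so that $\nu(\Sigma)$ lies in the open lower hemisphere. Equality at some point would mean the tangent plane contains the recession direction $e$, hence $\Sigma$ contains a half-line through that point, contradicting strict convexity. Composing with stereographic projection from $e$, we obtain a $K'$-quasiregular map $f\colon\Sigma\to B_1\subset\mathbb{R}^n$, and each coordinate $f^j$ is bounded.

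\emph{Step 3 (parabolicity).} By the Gauss equation, $\operatorname{Ric}_\Sigma(e_i,e_i)=\kappa_i(H-\kappa_i)\ge0$. Bishop--Gromov then gives $|B_r|\le\omega_nr^n$ for intrinsic balls, and the standard volume criterion $\int^\infty\bigl(r/|B_r|\bigr)^{1/(n-1)}\dd r=\infty$ shows that $\Sigma$ is $n$-parabolic. When $n=2$ this is classical parabolicity: by Huber, $\Sigma$ is conformally $\mathbb{C}$, and Step 1 gives a bounded quasiconformal map on $\mathbb{C}$, which is constant.

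\emph{Step 4 (Liouville).} For general $n$, the components of a quasiregular map solve an $\mathcal{A}$-harmonic equation of type $n$, namely $\operatorname{div}\mathcal{A}(x,\nabla u)=0$ with $\mathcal{A}(x,\xi)\cdot\xi\simeq|\xi|^n$, by Reshetnyak's theory, which pulls back to Riemannian domains. On an $n$-parabolic manifold, every bounded solution of such an equation is constant. To prove this, test the equation against $\eta^n(u-c)$, where $\eta$ is a cutoff with arbitrarily small $n$-energy, and use Caccioppoli. Hence $\nu$ is constant, so $\operatorname{II}\equiv0$, contradicting strict convexity; therefore $\Sigma$ is compact. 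I expect the main obstacle to be Step 4: it needs the correct structure conditions for $\mathcal{A}$ on a manifold rather than on $\mathbb{R}^n$, and care that bounded $\mathcal{A}$-harmonic functions have $\int|\nabla u|^n<\infty$ on parabolic ends. A secondary delicate point is the strictness argument in Step 2. If that fails, I would instead argue with $\int_\Sigma K=|\nu(\Sigma)|<|\mathbb{S}^n|/2$ combined with the growth of $\int_{B_r}K$ forced by quasiregularity.
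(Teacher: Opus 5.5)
Your argument is correct, and it follows the quasiconformal Gauss map strategy that the paper attributes to Hamilton; the paper only cites this theorem and gives no proof of its own. The two points you flag as delicate are harmless: the strict inequality in Step~2 is not needed, because $\langle\nu,e\rangle\le0$ already places $\nu(\Sigma)$ in the closed lower hemisphere, which stereographic projection from $e$ maps onto $\overline{B_1}$, and testing with $\eta^n(u-c)$ gives $\int_{\{\eta=1\}}|\nabla u|^n\le C\|u-c\|_\infty^n\int|\nabla\eta|^n$ directly, with no a priori finite-energy assumption on $u$.
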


	An alternative approach uses mean curvature flow.
	Bourni--Langford--Lynch~\cite{BLL23} gave such a proof under
	the additional assumption of bounded curvature.
	For hypersurface dimensions $n\ge3$, Cheng--Lu~\cite{CL26}
	constructed the flow without assuming bounded initial curvature,
	thereby removing this assumption from the flow approach.\par
    The natural boundary
analogue of Bonnet--Myers should instead impose a lower bound on the
interior Ricci curvature together with a convexity condition on the
boundary, and should conclude compactness of $M$.  This is precisely
the content of a recent theorem of Yan--Zhu~\cite{YZ26}.  Confirming a
conjecture of M.~Li~\cite[Conjecture~1.3]{Li14}, they prove that if
$(M^n,g)$ is a complete Riemannian $n$-manifold with nonempty smooth
boundary satisfying
\[
\operatorname{Ric}_g\ge0,
\qquad \operatorname{II}\ge1\cdot g
\quad\text{on }\partial M,
\]
then $M$ is compact
\cite[Theorem~1.2]{YZ26}. \par
	Motivated simultaneously by these intrinsic and extrinsic theorems,
	we consider a complete Riemannian $3$-manifold with boundary, impose
	Ricci pinching in the interior, and impose second fundamental form pinching  on
	the boundary.  More precisely, when $\partial M\ne\emptyset$, we call
	$(M,\partial M,g)$ \emph{second-fundamental-form-pinched} if
	$\operatorname{II}\ge0$ and
	\begin{equation}\label{eq:intro-II-pinching}
		\operatorname{II}\ge\lambda H g
		\qquad\text{on }\partial M
	\end{equation}
	for some $\lambda>0$, where
	$\operatorname{II}(X,Y)=\langle\nabla_XY,\mathbf n\rangle$,
	$H=\operatorname{tr}\operatorname{II}$, and $\mathbf n$ is the inward
	unit normal.  This leads to the following boundary analogue of
	Hamilton's pinching problem.

	\begin{question}
		Let $(M,\partial M,g)$ be complete and connected, with
		$\dim M=3$ and nonempty convex boundary.  If $(M,g)$ is
		Ricci-pinched and $(M,\partial M,g)$ is
		second-fundamental-form-pinched, must $M$ be compact or flat?
	\end{question}

	   We prove that
	superquadratic volume growth not only yields flatness in the noncompact
	case, but also rules out such collapsed flat quotients and singles out
	$\mathbb R^3_+$.  We recall the precise growth condition, allowing an
	empty boundary because the same notion is used in the boundaryless
	results above.

	\begin{defn}
		Let $(M,\partial M,g)$ be a complete, connected, noncompact
		Riemannian $3$-manifold with convex boundary, possibly empty, and
		$\operatorname{Ric}\ge0$.  We say $(M,\partial M,g)$ has
		\emph{superquadratic volume growth} if there exist $q\in M$,
		$C_{\mathrm{vol}}>0$, $\tilde r>0$, and $\alpha\in(1,2]$ such
		that for all $r>\tilde r$,
		\[
		C_{\mathrm{vol}}^{-1}\,r^{1+\alpha}\le|B_r(q)|
		\le C_{\mathrm{vol}}\,r^{1+\alpha},
		\]
		where $|B_r(q)|$ is the volume of the geodesic ball of radius
		$r$ centered at $q$.
	\end{defn}

	Our main result gives the anticipated dichotomy in the noncompact
	case and identifies its unique noncollapsed flat model.
	
	\begin{thm}\label{thm:main}
		Let $(M,\partial M,g)$ be a complete, connected $3$-manifold
		with nonempty convex boundary, Ricci-pinched and
		second-fundamental-form-pinched, with superquadratic volume
		growth.  Then $(M,\partial M,g)$ is isometric to the Euclidean
		half-space
		\[
		\mathbb{R}^3_+
		=\{\,x=(x_1,x_2,x_3)\in\mathbb{R}^3:x_3\ge0\,\},
		\]
		equipped with the standard flat metric.
	\end{thm}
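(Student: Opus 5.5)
We follow the abstract: a $p$-capacitary argument in the spirit of Benatti--Le\'on Quir\'os--Oronzio--Pluda~\cite{BPO}, adapted to the free boundary $\partial M$.

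\textbf{Setup.} Assume $M$ is noncompact; the superquadratic growth hypothesis forces this. Fix a small relatively open half-ball $\Omega\subset M$ centred at a boundary point, with $\partial\Omega\cap\partial M\neq\emptyset$. For $p\in(1,1+\alpha)$, where $\alpha$ is the volume growth exponent, we construct the mixed potential $u_p$. It satisfies $\Delta_p u_p=0$ in $M\setminus\overline\Omega$, with $u_p=1$ on $\partial\Omega\cap\mathrm{int}\,M$, the Neumann condition $\partial_{\mathbf n}u_p=0$ on $\partial M\setminus\overline\Omega$, and $u_p\to0$ at infinity. Existence comes from exhaustion by bounded domains, using the variational problem with Neumann data on $\partial M$. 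Decay at infinity should come from nonparabolicity for $p<1+\alpha$. Here we use that $\mathrm{Ric}\ge0$ together with convex boundary gives volume doubling and a Neumann Poincar\'e inequality. Li--Yau type gradient estimates for the $p$-Laplacian with Neumann data on a convex boundary then give the two-sided bounds
\[
u_p\simeq r^{-(1+\alpha-p)/(p-1)},\qquad |\nabla u_p|\lesssim u_p/r .
\]

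\textbf{Monotonicity.} For the level sets $\Sigma_t=\{u_p=t\}$, which meet $\partial M$ orthogonally by the Neumann condition, set
\[
F_p(t)=t^{-\beta}\int_{\Sigma_t}|\nabla u_p|^{\gamma}\,H\,d\sigma ,
\]
with exponents $\beta,\gamma$ chosen so that $F_p$ is scale invariant. Differentiating in $t$ and using the Bochner formula and the Gauss equation, $F_p'$ involves three kinds of terms:
\begin{itemize}
\item the scalar curvature $\mathrm R$ of $M$;
\item the traceless second fundamental form of $\Sigma_t$;
\item a boundary integral over the curves $\Sigma_t\cap\partial M$.
\end{itemize}
The boundary term is handled by the weak Gauss--Bonnet formula for surfaces with boundary. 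The geodesic curvature of $\partial\Sigma_t$ inside $\Sigma_t$ equals $\mathrm{II}(\tau,\tau)$, where $\tau$ is the unit tangent of the boundary curve, because $\Sigma_t$ meets $\partial M$ orthogonally. So Gauss--Bonnet turns $\int_{\partial\Sigma_t}\mathrm{II}(\tau,\tau)$ into $2\pi\chi(\Sigma_t)-\int_{\Sigma_t}K$. This should show that $F_p$ is monotone, with the defect bounded below by
\[
c\int\!\!\int \rho\,\mathrm R\,|\nabla u_p|^{\cdots}\;+\;c\int\!\!\int \lambda H_{\partial M}\,|\nabla u_p|^{\cdots}\;+\;(\text{traceless terms}).
\]
Here the pinchings $\mathrm{Ric}\ge\rho\,\mathrm R g$ and $\mathrm{II}\ge\lambda Hg$ are used to dominate the Ricci term $\mathrm{Ric}(\nu,\nu)$ and the term $\mathrm{II}(\nu,\nu)$ that appear in the Bochner and boundary computations. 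Since $\partial M$ is only weakly convex, the free-boundary level sets might not have the expected topology. For half-disc type regions we need $\chi(\Sigma_t)\le1$, with the boundary contributing only $\pi$ rather than $2\pi$. This is the natural normalisation when the model is $\mathbb R^3_+$, where level sets are hemispheres.

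\textbf{Asymptotics and rigidity.} The gradient decay $|\nabla u_p|\lesssim u_p/r$ is sharp up to constants. Together with superquadratic growth it should show that $F_p(t)$ stays bounded as $t\to0$. Integrating the monotonicity from the inner boundary out to infinity then bounds the total defect
\[
\int_{M\setminus\Omega}\Big(\rho\,\mathrm R+\lambda\, H_{\partial M}\,\delta_{\partial M}\Big)|\nabla u_p|^{\cdots}
\]
by a quantity that vanishes under rescaling. Here $\delta_{\partial M}$ denotes the surface measure on $\partial M$, so the second term is a boundary integral. To exploit this we blow down:
\begin{enumerate}
\item apply the argument to the rescaled metrics $r^{-2}g$;
\item for each rescaled metric, take the corresponding rescaled capacitary potential;
\item use the scale invariance of $F_p$ to conclude that the weighted integrals of $\mathrm R$ and $H_{\partial M}$ over annuli of scale $r$ are $o(1)$;
\item invoke the pinching a second time, together with the Harnack inequality and the gradient lower bound $|\nabla u_p|\gtrsim u_p/r$ (obtained on most of each annulus from the equation), to conclude $\mathrm R\equiv0$ and $H_{\partial M}\equiv0$.
\end{enumerate}

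Then $\mathrm{Ric}\equiv0$, which is flatness in dimension $3$, and $\mathrm{II}\equiv0$, so the boundary is totally geodesic. Doubling $M$ across $\partial M$ gives a complete flat manifold without boundary. Superquadratic growth with $\alpha>1$ excludes the quotients $\mathbb R^2\times S^1$ and $\mathbb R\times T^2$, and by Bieberbach leaves only $\mathbb R^3$. Hence $M$ is a flat region of $\mathbb R^3$ bounded by totally geodesic, connected boundary, that is $\mathbb R^3_+$. Two alternatives must be excluded. A slab $\mathbb R^2\times[0,a]$ has two boundary components, but one checks that a slab has only quadratic volume growth. Quotients of the slab are excluded by the same volume growth argument.

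\textbf{Main obstacle.} The hardest step is the monotonicity computation at the free boundary. It requires low regularity of $\Sigma_t$ near critical points and corners where $\Sigma_t$ meets $\partial M$. The first attempt would be the $\varepsilon$-regularisation $\Delta_p$ with $|\nabla u|^2\mapsto|\nabla u|^2+\varepsilon^2$, combined with a weak Gauss--Bonnet statement valid for almost every $t$. The other difficulty is controlling the sign of the Euler characteristic term for free-boundary level sets. For this we plan to use that $\Sigma_t$ separates $M$, which rules out closed components and leaves disc-like components bounded by curves on $\partial M$.
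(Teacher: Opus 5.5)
There is a genuine gap in your ``Asymptotics and rigidity'' step. Monotonicity, together with finiteness of the total dissipation or smallness of its contribution from annuli at scale $r$ after rescaling by $r^{-2}$, only tells you that the weighted curvature decays at infinity. A blow-down never sees a compact region, and ``invoking the pinching a second time'' gives no mechanism that carries information back to finite points.

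The example in Appendix~C makes this concrete. It has $\operatorname{Ric}\ge\frac18\mathrm{R}g>0$, strictly convex boundary and cubic volume growth. So $F_p$ is monotone with finite total dissipation, and since its curvature is $O(r^{-6})$, every scale-invariant annular curvature integral tends to zero. Yet $\mathrm{R}>0$ everywhere. The example fails only the boundary pinching. Hence the chain ``finite dissipation $\Rightarrow$ small rescaled annular integrals $\Rightarrow$ $\mathrm{R}\equiv0$'' cannot be right. It also shows the boundary pinching must do more than bound the dissipation below by $\lambda H_{\partial M}$: in that example, the Gauss--Bonnet boundary term $\int_{\partial\Sigma_t}k_g$ is not controlled by the boundary dissipation $\int_{\partial\Sigma_t}\operatorname{II}(\nu,\nu)$.

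The missing idea is a sharp threshold turned into an ODE, with the strict deficit produced locally. The paper takes $w_p=-(p-1)\log u_p$ and $F_p(t)=\int_{\Sigma_t}\bigl(\frac{H|\nabla w_p|}{3-p}-\frac{|\nabla w_p|^2}{(3-p)^2}\bigr)\le\frac14\int_{\Sigma_t}H^2$. Monotonicity then needs only $\operatorname{Ric}\ge0$ and $\operatorname{II}\ge0$: the boundary term $-\frac{1}{3-p}\int_{\partial\Sigma_t}\operatorname{II}(\nu,\nu)$ simply has a sign, and $k_g=\operatorname{II}(\tau,\tau)$ does not appear in $F_p'$.

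Gauss--Bonnet enters afterwards, through the quantization $\int_{\Sigma_t}\mathrm{R}^\top+2\int_{\partial\Sigma_t}k_g\in4\pi\mathbb{Z}$ and the dichotomy ``$\le0$ or $\ge4\pi$''. This needs no topological control of $\Sigma_t$. Your claim that separation rules out closed or non-disc components is unjustified, and the case needing separate treatment is $\chi\le0$, which the dissipation handles directly. The dichotomy yields $F_p'\le\max\{-\frac{2}{3-p}F_p,\,-\frac{2\rho}{3-p}(2\pi-F_p)\}$. In the second case, $-4\rho\int k_g$ is absorbed by $2\int\operatorname{II}(\nu,\nu)$, using $\operatorname{II}(\nu,\nu)\ge\lambda H_{\partial M}\ge\lambda k_g$ and $\lambda\ge2\rho$.

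So $F_p(0)<2\pi$ forces exponential decay of $G_p\le F_p$. Combined with $C_p(\Sigma_t)=e^tC_p(\Sigma_0)$, H\"older and coarea, this gives $\operatorname{Vol}\{w_p\le t\}\gtrsim e^{\frac{9-p}{(3-p)^2}t}$. The decay of $u_p$ bounds the radius of $\{w_p\le t\}$, which forces $\alpha\le4/(5-p)$, impossible for $p$ near $1$.

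The initial deficit comes from local Willmore expansions. Fermi half-balls satisfy $\int H^2=8\pi-4\pi H_{\partial M}(q)r+O(r^2)$, which gives $\operatorname{II}\equiv0$. Then geodesic hemispheres satisfy $\int H^2\le8\pi$, which forces $F_p\equiv2\pi$. Every dissipation term therefore vanishes, $\operatorname{Ric}(\nu,\nu)=0$, and the pinching gives $\operatorname{Ric}=0$ outside each small half-ball; shrinking the radius gives $\operatorname{Ric}\equiv0$ on $M$. Your final doubling and classification step is fine and agrees with the paper.
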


	\begin{cor}
		Under the assumptions of Theorem~\ref{thm:main}, the boundary
		$\partial M$ is connected, noncompact, and totally geodesic.  In
		particular, there is no complete connected $3$-manifold with compact
		nonempty convex boundary satisfying the two pinching conditions and
		superquadratic volume growth.
	\end{cor}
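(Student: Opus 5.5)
This corollary follows directly from Theorem~\ref{thm:main}, so the plan is to read the conclusions off the model space. By Theorem~\ref{thm:main}, $(M,\partial M,g)$ is isometric to $\mathbb{R}^3_+$ with the flat metric, and this isometry carries $\partial M$ onto $\partial\mathbb{R}^3_+=\{x_3=0\}$. Connectedness, noncompactness, and total geodesicity are intrinsic and extrinsic isometry invariants. The second fundamental form is one of these, since it is defined through the Levi-Civita connection and the inward normal, both preserved by isometries of manifolds with boundary. So it suffices to verify the three properties for the plane $\{x_3=0\}\subset\mathbb{R}^3_+$.

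The checks are immediate. The plane $\{x_3=0\}\cong\mathbb{R}^2$ is connected and noncompact, because it is unbounded. Its inward unit normal is the constant field $\mathbf n=e_3$. Hence $\operatorname{II}(X,Y)=\langle\nabla_XY,e_3\rangle=0$ for tangent fields $X,Y$, since the flat connection of coordinate-tangent fields stays tangent to the plane (equivalently, $\nabla e_3=0$). So $\partial M$ is totally geodesic, and in particular $H=0$. Note that this is consistent with the pinching \eqref{eq:intro-II-pinching}, which holds trivially as $0\ge0$.

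For the final statement, argue by contradiction. Suppose a complete connected $3$-manifold has compact nonempty convex boundary, is Ricci-pinched and second-fundamental-form-pinched, and has superquadratic volume growth. The definition of superquadratic volume growth already presupposes that $M$ is noncompact, so Theorem~\ref{thm:main} applies. It then forces $\partial M$ to be isometric to $\mathbb{R}^2$, which is noncompact, contradicting compactness of $\partial M$.

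There is no genuine obstacle here. The only point requiring care is that the isometry furnished by Theorem~\ref{thm:main} is an isometry of manifolds with boundary, so it maps $\partial M$ to $\partial\mathbb{R}^3_+$ and preserves $\operatorname{II}$. This is automatic, because isometries preserve the metric boundary (points with no Euclidean neighbourhood), preserve the Levi-Civita connection, and map inward normals to inward normals.
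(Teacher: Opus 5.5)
Your proposal is correct and follows the paper's own proof, which likewise reads off from Theorem~\ref{thm:main} that $\partial M$ is isometric to $\partial\mathbb{R}^3_+=\mathbb{R}^2$ and that its second fundamental form vanishes identically. Your remark that noncompactness of $M$ is already built into the superquadratic-growth hypothesis, so that the theorem applies in the final ``in particular'' clause, makes explicit a step the paper leaves implicit.
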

	\begin{proof}
		By Theorem~\ref{thm:main}, $\partial M$ is isometric to
		$\partial\mathbb{R}^3_+=\mathbb{R}^2$, and its second fundamental form
		vanishes identically.
	\end{proof}
    The boundary pinching assumption is essential even for the interior
	flatness conclusion.  One might try to separate the intrinsic and
	extrinsic parts of the problem: first deduce interior flatness from
	Ricci pinching and boundary convexity alone, and then, after establishing
	an isometric realisation as a convex domain in $\mathbb R^3$, use
	boundary pinching to appeal to Euclidean hypersurface rigidity.
	However, we find that the first step is false, even under cubic volume growth and strict
	boundary convexity, see the example in Appendix \ref{sec:eps-appendix-c}.

	Our proof of Theorem \ref{thm:main} builds on the nonlinear potential-theoretic framework
	developed in~\cite{BFM24,BPP} and its application to Ricci-pinched
	three-manifolds in~\cite{BPO}.  In the boundaryless setting,
	superquadratic volume growth ensures the relevant $p$-nonparabolicity
	and hence the existence of proper $p$-capacitary potentials.  The
	level sets of such a potential carry the quantities $F_p$ and $G_p$,
	whose first-variation identities yield a system of monotonicity
	formulas.  Since these level sets need not be globally smooth,
	the approximation theory of~\cite{BPP} supplies a weak Gauss--Bonnet
	theorem for almost every level.  Combining this theorem with the Gauss
	equation and Ricci pinching reduces the geometric monotonicity formulas
	to differential inequalities.  Finally, the Willmore expansion of
	small geodesic spheres provides a strict initial deficit at any point
	of positive scalar curvature.  The resulting ODE estimates are
	incompatible with superquadratic volume growth, forcing the ambient
	manifold to be Ricci-flat.

	Passing from this scheme to manifolds with boundary requires four new
	ingredients.  We first develop the mixed Dirichlet--Neumann potential
	theory and the corresponding monotonicity formulas, including the
	physical-boundary contribution.  We then establish a free-boundary
	weak Gauss--Bonnet formula for the level surfaces.  Third, we replace
	geodesic balls by Fermi coordinate half-balls and compute a Willmore
	expansion that detects the mean curvature of $\partial M$; this is the
	key step in proving that the boundary is totally geodesic.  Finally,
	we analyse the equality case of the monotonicity formula to pass from
	boundary rigidity to $\operatorname{Ric}\equiv0$ in the interior.  This
	last step is inspired by the monotonicity--rigidity proof
	of~\cite{BFM24} and has no counterpart in~\cite{BPO} in this
	free-boundary form.

	We next outline the proof of Theorem~\ref{thm:main}.  Fix an exponent
	$p\in(1,2)$.  Superquadratic volume growth implies
	$p$-nonparabolicity and yields a proper mixed capacity potential
	$w_p$, whose free-boundary level sets carry monotone quantities
	$F_p$ and $G_p$.  If the second fundamental form of $\partial M$ does
	not vanish identically, its pinching produces a point
	$q\in\partial M$ with $H_{\partial M}(q)>0$.  A sufficiently small
	Fermi coordinate half-ball about $q$ then has
	\[
		\int_{S_r^+(q)}H^2\,\dd\mathcal{H}^2<8\pi,
	\]
	and hence $F_p(0)<2\pi$.  Monotonicity gives exponential decay of
	$F_p$ and $G_p$.  Combining this decay with the capacity identity,
	coarea, and the pointwise decay of the potential forces
	$\alpha\le4/(5-p)$.  Since the argument applies to every
	$p\in(1,2)$, letting $p\downarrow1$ contradicts $\alpha>1$.
	Consequently $\partial M$ is totally geodesic.

	Once $\operatorname{II}\equiv0$, small geodesic hemispheres centred on
	$\partial M$ are free-boundary surfaces and have Willmore energy at
	most $8\pi$.  Choose $p>1$ sufficiently close to $1$ that
	$4/(5-p)<\alpha$.  The preceding volume-growth contradiction rules out
	any strict loss in the initial monotone quantity, so the relevant
	$F_p$ is identically equal to $2\pi$.  Its derivative formula then
	forces equality in every nonnegative dissipation term.  The resulting
	equality system excludes critical points of the potential and, using
	the Ricci pinching, gives $\operatorname{Ric}=0$ outside the initial
	half-ball.  Repeating the argument for every sufficiently small radius
	about one fixed boundary point and letting the radius tend to zero
	proves that $M$ is flat.  The smooth flat double of $M$ has the same
	superquadratic growth exponent and is therefore Euclidean; the
	involution exchanging the two copies becomes reflection in an affine
	plane, which identifies $M$ with $\mathbb{R}^3_+$.

	The paper is organised as follows.  Section~2 develops the mixed
	$p$-capacitary potentials and the monotone quantities.  Section~3
	establishes the free-boundary weak Gauss--Bonnet formula and the
	Willmore-type estimates.  Section~4 proves the main theorem, including
	the Fermi coordinate calculation.  Appendix~A contains the
	regularisation and edge estimates,  Appendix~B proves $p$-nonparabolicity and the decay of the capacity potential, while Appendix~C gives an non-flat example on Ricci-pinched $3$-manifolds with convex boundary, but not second-fundamental-form-pinched.

	\medskip\noindent\textbf{Acknowledgements.}
	The authors would like to thank Luca Benatti, Man-Chun Lee, Alessandra Pluda  for their interest and helpful comments.

    \noindent\textbf{Disclosure on AI assistance.}
The authors used AI-assisted tools to write the paper. The authors verified and completed all mathematical
arguments and take full responsibility for the content.

	\section{\texorpdfstring{$p$}{p}-harmonic functions and monotone quantities}
	
	\subsection{\texorpdfstring{$p$}{p}-harmonic functions}
	
	Let $(M,\partial M,g)$ be a complete, connected, noncompact
	Riemannian $3$-manifold with nonempty convex boundary,
	$\operatorname{Ric}\ge0$, and superquadratic volume growth.
	Let $\Omega\subset M$ be a closed bounded set with smooth boundary,
	$\Omega\cap\partial M\neq\emptyset$, and assume that
	$M\setminus\Omega$ is connected.  For every such domain $K\subset M$
	we use the convention
	\[
		\partial_{\operatorname{int}}K
		:=\overline{\partial K\cap\operatorname{int}(M)};
	\]
	where the closure is taken in $M$; thus the interior face includes its edge
	$\partial_{\operatorname{int}}K\cap\partial M$.  We assume that
	$\partial_{\operatorname{int}}\Omega$ meets $\partial M$ orthogonally
	along this edge.
	
	\begin{defn}[$p$-capacity and $p$-nonparabolicity]
		For a closed bounded set $D\subset M$, the normalised
		$p$-capacity of $\partial_{\operatorname{int}}D$ is
		\begin{equation}\label{eq:capacity-defn}
			C_p(\partial_{\operatorname{int}}D)
			= \inf\Bigl\{\,
			\frac{1}{4\pi}\Bigl(\frac{p-1}{3-p}\Bigr)^{p-1}
			\int_{M\setminus D}|\nabla\psi|^{p}\,\dd\mathcal{H}^{3}
			: \psi\in C_c^{\infty}(M),\;\psi\ge\chi_D
			\Bigr\},
		\end{equation}
		where $\chi_D$ is the characteristic function of $D$.
		We say that $(M,\partial M)$ is \emph{$p$-non\-para\-bolic}
		if there exists a closed bounded set $\Omega\subset M$ as
		above such that $C_p(\partial_{\operatorname{int}}\Omega)>0$.
	\end{defn}
	
	Under the assumption that $(M,\partial M)$ is $p$-nonparabolic,
	the condition $C_p(\partial_{\operatorname{int}}\Omega)>0$ is
	equivalent to the existence of a non-trivial solution to the
	following mixed boundary value problem (for $p\in(1,2)$):
	\begin{equation}
		\begin{cases}
			\Delta_p u_p = 0                     & \text{in } M\setminus\Omega,\\[4pt]
			u_p = 1                              & \text{on }
			\partial_{\operatorname{int}}\Omega,\\[4pt]
			\langle\nabla u_p,\mathbf{n}\rangle=0& \text{on }
			\partial M\setminus\Omega,\\[4pt]
			u_p(x)\to0                           & \text{as }
			d(x,o)\to+\infty,\label{ph}
		\end{cases}
	\end{equation}
	where $\Delta_p f=\operatorname{div}(|\nabla f|^{p-2}\nabla f)$,
	$\mathbf{n}$ is the inward unit normal along $\partial M$, and
	$o\in M$ is a fixed reference point.  Although $u_p$ need not belong to
	the smooth compactly supported class in~\eqref{eq:capacity-defn}, the
	variational identification proved in
	Appendix~\ref{sec:pdepart-appendix} shows that the infimum agrees with
	the energy of $u_p$:
	\begin{equation}\label{eq:capacity-def}
		C_p(\partial_{\operatorname{int}}\Omega)
		= \frac{1}{4\pi}\Bigl(\frac{p-1}{3-p}\Bigr)^{p-1}
		\int_{M\setminus\Omega}|\nabla u_p|^{p}\,\dd\mathcal{H}^{3}.
	\end{equation}
	
	For complete manifolds without boundary, Li--Tam~\cite{LiTam}
	developed the corresponding linear theory for Green functions and
	harmonic functions, while Holopainen~\cite{Holopainen} established
	nonlinear $p$-potential-theoretic volume-growth criteria for the
	parabolicity of ends.  We do not invoke those boundaryless criteria
	directly here: the formulation needed for a convex physical boundary,
	including the relevant global and end issues, is proved separately in
	Appendix~\ref{sec:pdepart-appendix}.
	
	For manifolds with convex boundary, the corresponding analytic input is
	most naturally formulated on the metric-measure space
	$(M,d,\mathcal{H}^3)$.  It is an $\operatorname{RCD}(0,3)$ space and hence
	satisfies volume doubling and a weak $(1,p)$-Poincar\'e inequality,
	including on balls meeting $\partial M$.
	Appendix~\ref{sec:pdepart-appendix} combines these
	estimates with the Neumann Harnack inequality and a direct capacity bound
	for balls centred at arbitrary distant points.  The finite mixed problems
	are solved on a nested Lipschitz exhaustion whose artificial boundary is
	transverse to $\partial M$; orthogonality at this moving outer edge is not
	required, because all local estimates are applied on balls disjoint from
	the artificial Dirichlet face.  This gives the following result.
	
	\begin{prop}[$p$-nonparabolicity]\label{prop:p-nonparabolic}
		Under the above assumptions ($\operatorname{Ric}\ge0$, $\partial M$ convex, and superquadratic volume growth $|B_r|\ge C^{-1}r^{1+\alpha}$ with $\alpha>1$), $(M,\partial M)$ is $p$-nonparabolic, i.e.\ $C_p(\partial_{\operatorname{int}}\Omega)>0$.  Consequently, the solution $u_p$ to~\eqref{ph} exists and satisfies the pointwise decay estimate
		\[
		u_p(x)\le C\,d(x,o)^{-\frac{\alpha+1-p}{p-1}}
		\]
		for all $d(x,o)$ sufficiently large, with $C=C(M,\Omega,p)>0$.
	\end{prop}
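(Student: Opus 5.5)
The plan is to work on the metric-measure space $(M,d,\mathcal H^3)$. Since $\operatorname{Ric}\ge0$ and $\partial M$ is convex, this space is $\operatorname{RCD}(0,3)$. It therefore has volume doubling and a weak $(1,p)$-Poincaré inequality on all balls, including those meeting $\partial M$. On such balls, Neumann-type $p$-harmonic functions (zero conormal derivative on $\partial M$) are exactly the Cheeger $p$-harmonic functions on the RCD space, so the De Giorgi--Moser machinery gives an elliptic Harnack inequality. The proof then has three steps: prove nonparabolicity, build $u_p$ by exhaustion, and prove the decay by a capacity bound on distant balls combined with Harnack.

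\textbf{Step 1 (nonparabolicity).} First I bound the capacity of the condenser $(B_r(o),B_{2r}(o))$ via doubling and Poincaré. The standard estimate is
\[
\operatorname{cap}_p(B_r,B_{2r})\simeq r^{-p}|B_r|\simeq r^{1+\alpha-p},
\]
using the two-sided superquadratic growth. Following Holopainen's criterion, $p$-parabolicity is equivalent to the divergence of
\[
\int^\infty\Bigl(\frac{t}{|B_t|}\Bigr)^{1/(p-1)}\dd t .
\]
Here the integrand is $t^{-\alpha/(p-1)}$, and this is integrable because $\alpha>1>p-1$. To make the argument self-contained I would prove the needed direction directly. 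Sum the annular condenser capacities in series over dyadic annuli $A_k=B_{2^{k+1}}\setminus B_{2^k}$. The reciprocals $\operatorname{cap}_p(A_k)^{-1/(p-1)}$ form a convergent geometric series, since $1+\alpha-p>0$. The series law for capacities then gives
\[
C_p(\partial_{\operatorname{int}}\Omega)\ge c\Bigl(\sum_k \operatorname{cap}_p(A_k)^{-1/(p-1)}\Bigr)^{-(p-1)}>0 .
\]
The lower bound on each annulus uses Poincaré on balls straddling $\partial M$, which is where the RCD structure is essential.

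\textbf{Step 2 (existence).} Take a nested Lipschitz exhaustion $\Omega\subset D_j\nearrow M$ whose outer boundaries are transverse to $\partial M$. On each $D_j\setminus\Omega$, minimise the $p$-energy among $\psi$ with $\psi=1$ on $\partial_{\operatorname{int}}\Omega$, $\psi=0$ on the outer face, and free values on $\partial M$. The minimiser $u_j$ satisfies the Neumann condition weakly and $0\le u_j\le1$, and the $u_j$ increase in $j$ by comparison. Their energies decrease to $C_p$, up to the normalising constant. Local $C^{1,\beta}$ estimates away from the artificial face, valid on balls disjoint from it, give locally uniform convergence to $u_p$ with $\Delta_pu_p=0$ and $\langle\nabla u_p,\mathbf n\rangle=0$. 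Lower semicontinuity yields \eqref{eq:capacity-def}. That $u_p\not\equiv0$, and that $u_p\to0$ at infinity, will follow from Step 3.

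\textbf{Step 3 (decay), the main obstacle.} Fix $x$ with $R=d(x,o)$ large and set $B=B_{R/4}(x)$, which lies away from $\Omega$. Let $m=\inf_B u_p$. Then $\min(u_p/m,1)$ is admissible for the capacity of $B$ relative to $\Omega$, and by symmetry $C_p$ bounds $m^{p-1}$ times a capacity of $B$. More cleanly, test with the level-set truncation: the energy of $u_p$ on $\{u_p<m\}$ equals $m^{p-1}$ times the flux, which is normalised to $4\pi$ in \eqref{eq:capacity-def}. This gives
\[
m^{p-1}\operatorname{cap}_p(B,M)\lesssim 1 .
\]
I then need a lower bound, uniform in the centre, for $\operatorname{cap}_p(B_{R/4}(x),M)$. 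By the series argument of Step 1 centred at $x$, with doubling relative to $|B_R(x)|\simeq R^{1+\alpha}$ (valid since $B_R(x)\subset B_{2R}(o)$ and $B_{2R}(x)\supset B_R(o)$), this is $\gtrsim R^{1+\alpha-p}$. Hence
\[
\inf_B u_p\lesssim R^{-\frac{1+\alpha-p}{p-1}}.
\]
The Neumann Harnack inequality on $B_{R/4}(x)$, which is legitimate since $\partial M$ is allowed to meet the ball, converts the infimum into $\sup u_p$ on $B_{R/8}(x)$, giving the claimed bound.

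The delicate points are the uniformity of the doubling and capacity constants for distant centres, and justifying Harnack up to the convex boundary. I would handle the latter through the RCD identification rather than by PDE methods. Finally, nontriviality comes from $C_p>0$: the $u_j$ have energies bounded below, and the decay shows no mass escapes, so $u_p\not\equiv0$.
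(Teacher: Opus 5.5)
Your architecture matches the paper's Appendix~B: RCD$(0,3)$ via Han, doubling and Poincar\'e on balls meeting $\partial M$, a Neumann Harnack inequality from doubling--Poincar\'e theory, an exhaustion by minimisers that are free on $\partial M$, and the truncation comparison $m^{p-1}\operatorname{cap}_p(B)\lesssim$ energy followed by Harnack. However, the step that carries all the quantitative content fails as stated, because the series law for condensers runs in the opposite direction. Let $f_k$ be competitors for the annular condensers $(\overline{U_{k-1}},U_k)$ and take weights $\lambda_k$ with $\sum_k\lambda_k=1$. Then $\sum_k\lambda_kf_k$ is a competitor for the whole condenser, with energy $\sum_k\lambda_k^p\mathcal{E}(f_k)$, since the gradients have disjoint supports. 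Optimising over $\lambda$ gives
\[
\operatorname{cap}_p\le\Bigl(\sum_k\operatorname{cap}_p(A_k)^{-1/(p-1)}\Bigr)^{-(p-1)}.
\]
Equivalently, shorting the intermediate spheres into equipotentials can only increase capacity. This is the Nash-Williams/Holopainen \emph{parabolicity} direction: it yields upper bounds and can never show $C_p(\partial_{\operatorname{int}}\Omega)>0$.

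Your Step~3 obtains the crucial centre-uniform bound $\operatorname{cap}_p(B_{R/4}(x),M)\gtrsim R^{1+\alpha-p}$ ``by the series argument of Step~1 centred at $x$''. So both nonparabolicity and the decay rate are left unproved. Citing Holopainen's equivalence does not close the gap either. The converse direction needs Poincar\'e, it is proved for manifolds without boundary, and you would need a quantitative form that is uniform in the centre.

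The missing ingredient is a Poincar\'e chaining argument, which is exactly the paper's Lemma~\ref{ap:ball-capacity}. For an admissible $f$, let $f_j$ be its average over $B(x,2^jr)$. Then $f_0\ge1$, and $f_j\to0$ because $f$ has compact support and $|B(x,2^jr)|\to\infty$. Doubling plus Poincar\'e give $|f_j-f_{j+1}|\le C(2^jr)^{1-\beta/p}\mathcal{E}(f)^{1/p}$ with $\beta=1+\alpha>p$; this uses only the lower volume bound transferred to the centre $x$. Summing the geometric series yields $\operatorname{cap}_p(\overline{B}(x,r);M)\ge c\,r^{\beta-p}$. Nonparabolicity then follows at once from a ball inside $\Omega$, so your Step~1 can be dropped.

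Two smaller points. First, $\min(u_p/m,1)$ is not compactly supported, and its admissibility presupposes the decay you are trying to prove. Run the truncation on the exhaustion minimisers $v_j$, uniformly in $j$, and pass to the limit, as the paper does. Second, the energy of $u_p$ on $\{u_p<m\}$ is $m$ times the flux, not $m^{p-1}$ times it. Your resulting inequality $m^{p-1}\operatorname{cap}_p(B)\lesssim1$ is nevertheless correct.
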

	
	\begin{proof}
		This is Corollary~\ref{ap:application}.  Its proof uses a lower
		bound for the capacity of balls centred at arbitrary distant
		points, together with the Neumann Harnack inequality.  Thus the
		decay estimate holds along every sequence tending to infinity,
		without requiring a point to lie on a selected unbounded tail.
	\end{proof}
	\begin{lem}\label{lem:regularity}
		Under $\operatorname{Ric}\ge0$ and superquadratic volume growth,
		the solution $u_p$ to~\eqref{ph} satisfies the lower bound
		\begin{equation}
			u_p(x)\ge C^{-1}d(x,o)^{-\frac{3-p}{p-1}}
			\label{eq:u-lower-bound}
		\end{equation}
		for all $d(x,o)$ sufficiently large, with
		$C=C(M,\Omega,p)>0$.  The corresponding upper bound is
		provided by Proposition~\ref{prop:p-nonparabolic}.
	\end{lem}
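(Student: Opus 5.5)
The lower bound will come from comparing $u_p$ with an explicit radial subsolution built from the distance to $o$. The exponent $(3-p)/(p-1)$ is exactly the one for which $r^{-(3-p)/(p-1)}$ is $p$-harmonic in $\mathbb{R}^3$. Under $\operatorname{Ric}\ge0$, Laplacian comparison gives $\Delta r\le 2/r$ in the barrier sense.

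\textbf{Barrier computation.} Put $\beta=(3-p)/(p-1)$, $r=d(\cdot,o)$ and $v=r^{-\beta}$. For radial $f(r)$ with $f'<0$,
\[
\Delta_p f=|f'|^{p-2}\bigl((p-1)f''+f'\Delta r\bigr).
\]
Since $f'<0$, the inequality $\Delta r\le 2/r$ yields $f'\Delta r\ge 2f'/r$. Hence
\[
\Delta_p v\ge |f'|^{p-2}\Bigl((p-1)f''+\tfrac{2}{r}f'\Bigr)=0,
\]
so $v$ is a $p$-subsolution away from $o$. This holds in the viscosity or distributional sense across the cut locus, by the standard argument of Calabi type. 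Convexity of $\partial M$ enters in two places. First, the Laplacian comparison remains valid in $M$, because minimizing geodesics from $o$ stay in the interior or meet $\partial M$ tangentially; equivalently one may work on the $\operatorname{RCD}(0,3)$ space. Second, $\langle\nabla r,\mathbf n\rangle\ge0$ holds on $\partial M$ in the appropriate weak sense, so that $\langle\nabla v,\mathbf n\rangle\le0$ there. This sign is the right one for a subsolution of the Neumann problem: in the weak formulation with nonnegative test functions, the boundary term has the favorable sign.

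\textbf{Comparison on a truncated domain.} Fix $R_0$ large with $\Omega\subset B_{R_0}(o)$. By the Harnack inequality (Neumann version, Appendix~B) and $u_p>0$, we get $m:=\min_{\partial B_{R_0}}u_p>0$. Set $w=m R_0^{\beta}\,v$, so that $w\le u_p$ on $\partial_{\operatorname{int}}B_{R_0}$. On $B_R\setminus B_{R_0}$ with $R$ large, compare $w-mR_0^\beta R^{-\beta}$ with $u_p$. This function is still a subsolution, it is $\le u_p$ on the inner face, it is $\le 0<u_p$ on the outer face, and it has the correct Neumann sign on the physical boundary. The weak comparison principle for the mixed problem, tested with $(w-c-u_p)_+$, then gives $w-mR_0^\beta R^{-\beta}\le u_p$. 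Letting $R\to\infty$ yields $u_p\ge mR_0^\beta\, d(x,o)^{-\beta}$.

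\textbf{Main obstacle.} The delicate point is justifying the subsolution property of $v$ together with the boundary sign in the weak sense at the cut locus and along $\partial M$. There is also a possible corner singularity of $\partial B_R$ meeting $\partial M$, which is harmless since the test functions vanish there. For the interior, I would invoke Laplacian comparison on $\operatorname{RCD}(0,3)$ spaces (Gigli), which supplies $\Delta r\le 2/r$ as measures on $M\setminus\{o\}$ with no boundary contribution of the wrong sign. That handles both issues at once, because the measure-valued Laplacian in the RCD sense already encodes the Neumann condition. Combined with the chain rule for $f(r)$ with $f$ decreasing and convex, this gives $\Delta_p v\ge0$ weakly in the Neumann sense, and the rest is routine.
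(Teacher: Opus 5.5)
Your proposal is correct and is essentially the paper's argument: the barrier $r^{-(3-p)/(p-1)}$, Laplacian comparison $\Delta r\le 2/r$ with a favorable physical-boundary term, and weak comparison on an exhaustion. The differences are that the paper centres $r$ at a point $q\in\operatorname{int}(M)\cap\operatorname{int}(\Omega)$ and compares directly with $u_p=1$ on $\partial_{\operatorname{int}}\Omega$, so no Harnack step is needed, and that it justifies the barrier by classical barrier-sense comparison plus first variation at $\partial M$ rather than the $\operatorname{RCD}$ Laplacian. Your boundary signs are right only if $\mathbf n$ is the outward normal: with the paper's inward $\mathbf n$ one has $\langle\nabla r,\mathbf n\rangle\le0$, hence $\langle\nabla v,\mathbf n\rangle\ge0$, and that is the sign making the Neumann boundary term favorable for a subsolution (the $\operatorname{RCD}$ formulation you end up using makes this automatic anyway).
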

	
	\begin{proof}
		Choose $q\in\operatorname{int}(M)\cap\operatorname{int}(\Omega)$ and set
		$r(x)=d(x,q)$ and $a=(3-p)/(p-1)$.  Thus $q\notin
		M\setminus\Omega$ and $d(q,\partial_{\operatorname{int}}\Omega)>0$.
		Convexity of $\partial M$ ensures that minimizing geodesics from the
		interior point $q$ to interior points do not leave $M$.  Consequently
		the standard Laplacian comparison under $\operatorname{Ric}\ge0$ gives
		$\Delta r\le2/r$ in the barrier, hence distributional, sense away
		from $q$.  At every smooth point of $r$,
		\[
			\Delta_p(r^{-a})
			=a^{p-1}r^{-a(p-1)-p}
			\bigl((p-1)(a+1)-r\Delta r\bigr)\ge0,
		\]
		because $(p-1)(a+1)=2$; the usual barrier-to-distribution argument
		across the cut locus therefore gives
		\[
			\Delta_p(r^{-a})\ge0\qquad\text{in }M\setminus\Omega.
		\]
		On the physical boundary let $\nu_{\mathrm{out}}=-\mathbf{n}$.  The first
		variation formula for a minimizing segment from the interior point
		$q$ to $x\in\partial M$ gives
		$\partial_{\mathbf{n}}r(x)\le0$ in the barrier sense, and hence
		\[
			\partial_{\nu_{\mathrm{out}}}(r^{-a})
			=a r^{-a-1}\partial_{\mathbf{n}}r\le0.
		\]
		Equivalently, including the physical-boundary contribution, for every
		nonnegative test function
		$\phi\in C_c^\infty(\overline{M\setminus\Omega})$ whose support is
		disjoint from $\partial_{\operatorname{int}}\Omega$,
		\begin{equation}\label{eq:barrier-weak-supersolution}
			\int_{M\setminus\Omega}
			\bigl\langle |\nabla r^{-a}|^{p-2}\nabla r^{-a},
			\nabla\phi\bigr\rangle\,\dd\mu\le0.
		\end{equation}
		This is precisely the weak inequality needed to compare $r^{-a}$
		from below with the solution of the mixed problem; see also
		\cite[Proposition~2.10]{BFM24} for the distance-function computation.

		Since $r$ is bounded away from zero on
		$\partial_{\operatorname{int}}\Omega$, one can choose $c>0$ so small
		that $c r^{-a}\le1=u_p$ there.  Fix $\delta>0$.  Because
		$r^{-a}\to0$ and $u_p\ge0$, a sufficiently large smooth relatively
		bounded exhaustion domain $E_k\subset M\setminus\Omega$,
		whose relative boundary
		consists of the initial face, a physical-boundary part, and an
		artificial outer face, satisfies
		$c r^{-a}\le u_p+\delta$ on its artificial outer face.  Testing the
		difference of the weak inequalities for $c r^{-a}$ and $u_p+\delta$
		with $(c r^{-a}-u_p-\delta)_+$ (justified by approximation in the
		mixed Sobolev test space), the physical-boundary term having the
		favourable sign by \eqref{eq:barrier-weak-supersolution}, and using
		strict monotonicity of $\xi\mapsto|\xi|^{p-2}\xi$, yields
		$c r^{-a}\le u_p+\delta$ in $E_k$.  Letting first $k\to\infty$ and
		then $\delta\downarrow0$ gives
		\[
			c r^{-a}\le u_p\qquad\text{on }M\setminus\Omega.
		\]
		Finally $r(x)$ and $d(x,o)$ are comparable for $d(x,o)$ large by the
		triangle inequality, which proves \eqref{eq:u-lower-bound}.
	\end{proof}
	
	Setting $w_p=-(p-1)\log u_p$, a direct computation gives
	\begin{equation}\label{eq:wp}
		\begin{cases}
			\Delta_p w_p = |\nabla w_p|^{p}  & \text{in } M\setminus\Omega,\\
			w_p = 0                          & \text{on }
			\partial_{\operatorname{int}}\Omega,\\
			\langle\nabla w_p,\mathbf{n}\rangle = 0 & \text{on }
			\partial M\setminus\Omega,\\
			w_p(x)\to+\infty                 & \text{as }
			d(x,o)\to+\infty.
		\end{cases}
	\end{equation}
	Moreover, $\nabla w_p=-(p-1)\nabla u_p/u_p$, and $w_p$ inherits
	the regularity of $u_p$.

	\medskip
	\noindent\textit{Choice of the auxiliary domains.}
	The auxiliary domains used below need only have artificial boundary
	transverse to $\partial M$.  Choose a smooth nonnegative proper
	exhaustion $\rho:M\to[0,+\infty)$.  By Sard's theorem, there are values
	$c_j\to+\infty$ which are regular both for
	$\rho|_{\operatorname{int}(M)}$ and for $\rho|_{\partial M}$.  Fix a
	compact connected neighborhood
	$K\supset\Omega$, and, after increasing $c_1$ so that
	$K\subset\{\rho<c_1\}$, let $D_j$ be the component of
	$\{\rho<c_j\}$ containing $K$.  These components are nested and exhaust
	$M$: every point can be joined to $K$ by a compact path, which is
	contained in $\{\rho<c_j\}$ for all sufficiently large $j$.  Properness
	of $\rho$ makes every $D_j$ relatively compact.

	The artificial boundary
	$\Gamma_j=\partial_{\operatorname{int}}D_j$ is smooth and meets
	$\partial M$ transversely: at an edge point the regularity of
	$\rho|_{\partial M}$ says that the tangential component of
	$\nabla\rho$ is nonzero, so the normal to $\Gamma_j$ is not parallel to
	the normal to $\partial M$.
	Finally, for any prescribed $T,\tau>0$, properness of $w_p$ makes
	$\{w_p\le T+\tau\}$ compact.  Taking $j$ so large that this set is
	contained in $D_j$ gives, by compactness of $\Gamma_j$,
	\[
		\inf_{\Gamma_j}w_p>T+\tau.
	\]
	Thus the sublevel sets of $w_p$ determine how far out the auxiliary
	domain must be taken, but are not themselves used as its artificial
	boundary.  Indeed, at this stage a level of $w_p$ may contain critical
	points and hence need not be a smooth hypersurface.  This is the same
	type of transverse exhaustion used in
	Appendix~\ref{sec:pdepart-appendix} to construct the global potential.
	
	\begin{lem}\label{lem:eps-regularization}
		Let $D\subset M$ be a bounded open set containing $\Omega$, with
		smooth interior boundary and
		$\partial_{\operatorname{int}}D$ transverse to $\partial M$.
		For each $\varepsilon>0$ there exists a weak solution
		$w_p^\varepsilon$ on $D\setminus\Omega$ solving
		\begin{equation}\label{eq:eps-regularized}
			\begin{cases}
				\Delta_p^\varepsilon
				\bigl(e^{-w_p^\varepsilon/(p-1)}\bigr) = 0
				& \text{in } D\setminus\Omega,\\[4pt]
				w_p^\varepsilon = 0
				& \text{on } \partial_{\operatorname{int}}\Omega,\\[4pt]
				\langle\nabla w_p^\varepsilon,\mathbf{n}\rangle = 0
				& \text{on } \partial M\cap(D\setminus\Omega),\\[4pt]
				w_p^\varepsilon = w_p
				& \text{on } \partial D\cap\operatorname{int}(M),
			\end{cases}
		\end{equation}
		where $\Delta_p^\varepsilon f =
		\operatorname{div}\bigl((|\nabla f|^{2}+\varepsilon^{2})^{(p-2)/2}
		\,\nabla f\bigr)$.  The solution is smooth in the interior and up to
		the smooth portions of the physical Neumann face and of the initial
		Dirichlet face away from their intersection.  It is continuous up to
		the artificial outer Dirichlet face and assumes the prescribed trace
		there, but no smoothness along that face is asserted.  As
		$\varepsilon\to0^{+}$ the following holds:
		\begin{enumerate}
			\item\label{item:C1} For some $\beta>0$ the family
			$\{w_p^\varepsilon\}$ is uniformly bounded
			in $C^{1,\beta}(K)$ and converges to $w_p$ in $C^1(K)$ for every
			compact
			$K\subset\overline{D\setminus\Omega}
			\setminus\partial_{\operatorname{int}}D$.  In
			particular, there is a
			neighborhood $U_0$ of $\partial_{\operatorname{int}}\Omega$ such
			that the same bounds and convergence hold in
			$C^{1,\beta}(\overline{U_0\setminus\Omega})$ and
			$C^1(\overline{U_0\setminus\Omega})$, respectively;
			\item\label{item:W12} $\{|\nabla w_p^\varepsilon|\}$ is uniformly bounded in
			$W^{1,2}_{\mathrm{loc}}(D\setminus\Omega)$, and
			$|\nabla w_p^\varepsilon|\rightharpoonup|\nabla w_p|$
			weakly in $W^{1,2}_{\mathrm{loc}}(D\setminus\Omega)$;
			\item\label{item:levels} for almost every
			$t\in(0,\inf_{\partial D}w_p)$, the level set
			$\Sigma_t^\varepsilon=\{w_p^\varepsilon=t\}$ is a smooth
			hypersurface meeting $\partial M$ orthogonally;
		\item\label{item:reg} $w_p\in C^{1,\beta}_{\mathrm{loc}}(D\setminus\Omega)$,
			and $w_p$ is smooth on
			$D\setminus\bigl(\Omega\cup\operatorname{Crit}(w_p)\bigr)$.
		\item\label{item:initial-edge} Let
			$E=\partial_{\operatorname{int}}\Omega\cap\partial M$.
			There are a neighborhood $U$ of
			$\partial_{\operatorname{int}}\Omega$, constants $c>0$ and
			$\beta\in(0,1)$, such that, up to the Dirichlet--Neumann edge,
			\[
				w_p\in C^{1,\beta}\bigl(\overline{U\setminus\Omega}\bigr)
				\cap W^{2,2}(U\setminus\Omega),
				\qquad |\nabla w_p|\ge c.
			\]
			After shrinking $U$ if necessary, for all sufficiently small
			$\varepsilon$ one also has
			$|\nabla w_p^\varepsilon|\ge c/2$ on $U\setminus\Omega$ and
			$\{w_p^\varepsilon\}$ is uniformly bounded in
			$W^{2,2}(U\setminus\Omega)$.
	\end{enumerate}
	\end{lem}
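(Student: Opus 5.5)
The plan is to work with $u^\varepsilon:=e^{-w/(p-1)}$ rather than $w$ itself, since this turns \eqref{eq:eps-regularized} into a uniformly elliptic, nondegenerate quasilinear equation. In terms of $u$ the problem reads $\Delta_p^\varepsilon u=0$ in $D\setminus\Omega$, with $u=1$ on $\partial_{\operatorname{int}}\Omega$, $\partial_{\mathbf n}u=0$ on the physical face, and $u=u_p$ on the artificial face. Existence will come from the direct method. We minimise the strictly convex functional
\[
\int_{D\setminus\Omega}\frac1p\bigl(|\nabla u|^2+\varepsilon^2\bigr)^{p/2}\,\dd\mathcal H^3
\]
in the affine space of $W^{1,p}$ functions that equal $1$ on the initial face and $u_p$ on the artificial face, with no condition imposed on $\partial M$. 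The Neumann condition then arises as the natural boundary condition. The maximum principle, applied via truncation test functions, gives $\min u_p\le u^\varepsilon\le1$ on $D\setminus\Omega$, and $\min u_p>0$ there. Consequently $w^\varepsilon=-(p-1)\log u^\varepsilon$ is well defined and bounded. Continuity up to the artificial face follows from the Wiener criterion: that face is smooth and transverse to $\partial M$, hence Lipschitz.

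For smoothness, the $\varepsilon$-equation has coefficients $(|\xi|^2+\varepsilon^2)^{(p-2)/2}$ that are smooth and nondegenerate once $\nabla u$ is bounded. Interior and flat-Neumann (or smooth-Dirichlet) De Giorgi and $C^{1,\beta}$ estimates will give $C^{1,\beta}$ regularity, and Schauder bootstrapping then gives $C^\infty$ away from the edges. To obtain the $\varepsilon$-uniform estimates in (1), I will use the Lieberman and DiBenedetto--Tolksdorf $C^{1,\beta}$ theory for $p$-Laplace type operators with structure constants independent of $\varepsilon$. Near $\partial M$ I flatten in Fermi coordinates and reflect evenly across $\partial M$. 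The Neumann condition makes the reflected function a weak solution of an equation of the same structure, with merely Lipschitz coefficients, and the Lieberman theory tolerates this. Uniqueness of the limit problem (strict monotonicity of the operator, together with the comparison argument of Lemma~\ref{lem:regularity}) identifies every subsequential limit with $u_p$, which yields $C^1$ convergence on compact sets away from the artificial face.

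Item (2) follows from the standard estimate
\[
\int \eta^2\,\bigl(|\nabla u|^2+\varepsilon^2\bigr)^{(p-2)/2}\,|\nabla^2 u|^2\le C,
\]
obtained by differentiating the equation, combined with the local gradient bounds from (1) and the lower bound $u\ge c>0$, exactly as in \cite{BPP}. On the Neumann face, the boundary term produced in this Bochner-type computation is $-\mathrm{II}(\nabla^{\top}u,\nabla^{\top}u)$ times a positive weight, which has a favourable sign by convexity of $\partial M$. This gives uniform $W^{1,2}_{\mathrm{loc}}$ bounds on $|\nabla w^\varepsilon|$, and weak compactness then gives the stated weak convergence. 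For item (3), Sard's theorem applied to $w^\varepsilon$ on the interior and to $w^\varepsilon|_{\partial M}$ shows that almost every level is regular for both. Orthogonality to $\partial M$ is immediate from $\partial_{\mathbf n}w^\varepsilon=0$, since $\nabla w^\varepsilon$ is then tangent to $\partial M$. Item (4) is the classical regularity statement: $C^{1,\beta}$ from the uniform bounds, and smoothness where $\nabla w_p\neq0$ because the equation is nondegenerate there.

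The main obstacle is item (5), regularity at the Dirichlet--Neumann edge $E$. For this I will exploit the hypothesis that $\partial_{\operatorname{int}}\Omega$ meets $\partial M$ orthogonally. In Fermi coordinates adapted to both faces, the even reflection across $\partial M$ maps the Dirichlet face to a smooth hypersurface, orthogonal to the first order. The reflected function therefore solves a pure Dirichlet problem with datum $1$ on a $C^{1,1}$ boundary and Lipschitz coefficients, and boundary $C^{1,\beta}$ estimates (Lieberman) hold uniformly in $\varepsilon$. The gradient lower bound $|\nabla w_p|\ge c$ near $\partial_{\operatorname{int}}\Omega$ is obtained from the Hopf lemma for the reflected problem; for $u_p$ itself it can also be obtained by comparison with a barrier of the form $1-\kappa\,d(\cdot,\partial_{\operatorname{int}}\Omega)$ from below. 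Once $|\nabla u|\ge c$, the equation is uniformly elliptic with $C^{0,\beta}$ coefficients near the edge, and the difference-quotient method in tangential directions, combined with the equation for the normal direction, gives $W^{2,2}$ bounds uniform in $\varepsilon$. Finally, $C^1$ convergence transfers the lower bound $c/2$ to $w^\varepsilon$ for small $\varepsilon$. The delicate point to check is that the reflected metric is only Lipschitz across $\partial M$. I expect this to be sufficient for $W^{2,2}$ and $C^{1,\beta}$ regularity, but not beyond, which matches what the lemma asserts.
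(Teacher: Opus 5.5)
Your proposal is correct. For existence, the $\varepsilon$-uniform $C^{1,\beta}$ estimates, the identification of the limit, item (3), and the edge analysis in item (5), it follows the paper's route. That route is: minimisation or monotone-operator existence, DiBenedetto--Tolksdorf--Lieberman with structure constants independent of $\varepsilon$, strict monotonicity to identify the limit, even reflection in Fermi coordinates so that the orthogonal corner becomes a $C^2$ Dirichlet boundary, the Hopf lemma, and tangential difference quotients in the uniformly elliptic collar. There is one cosmetic difference in item (1): you reflect along the whole Neumann face, whereas the paper applies Lieberman's conormal estimate away from $E$ and reflects only at the edge.

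The genuine difference is item (2). The paper deliberately avoids any boundary computation. It regards $u_p^\varepsilon$ as a local weak solution on the $\operatorname{RCD}(0,3)$ space $M$, with convexity entering through Han's characterisation. It then takes from Schulz--Violo an $\varepsilon$-uniform $W^{1,2}$ bound on the flux magnitude $\Psi_\varepsilon(|\nabla u|)|\nabla u|$, together with a Lipschitz bound, and inverts $s\mapsto s\Psi_\varepsilon(s)$ with an $\varepsilon$-independent Lipschitz constant. You instead integrate the weighted Bochner identity
\[
\operatorname{div}(\nabla_{\nabla u}V)=a\operatorname{Ric}(\nabla u,\nabla u)+\operatorname{tr}(\nabla^2u\circ\nabla V),\qquad V=a(|\nabla u|)\nabla u,
\]
against $\eta^2$ up to $\partial M$. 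The boundary flux is exactly $-\eta^2a\,\operatorname{II}(\nabla u,\nabla u)\le0$ and can be dropped. Since $p<2$ and $|\nabla u^\varepsilon|$ is locally bounded by (1), the weight $a$ is bounded below. This gives an $\varepsilon$-uniform local $W^{2,2}$ bound on $u^\varepsilon$ up to $\partial M$, which is stronger than needed, and item (2) then follows by Kato's inequality.

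Your route is elementary and self-contained, and convexity is not even essential, since a trace inequality plus Young would absorb the boundary term. Its price is that it relies on fixed-$\varepsilon$ smoothness up to $\partial M$ to justify the integration by parts. The paper's route imports a heavy theorem but never touches the boundary geometry.

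One small slip concerns your alternative barrier argument for $|\nabla u_p|\ge c$. As phrased, $u_p\ge 1-\kappa d$ bounds $|\nabla u_p|$ from above on the face, which is the wrong direction. The Hopf-type bound needs $1-u_p\ge\kappa\phi$ for a subsolution barrier $\phi$ vanishing on the reflected face, and that is exactly what the Hopf lemma you invoke first provides. The main argument is therefore unaffected.
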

	
	\begin{proof}[Proof of Lemma~\ref{lem:eps-regularization} (overview)]
		The boundaryless regularization and compactness statements are
		collected in \cite[Propositions~2.1 and~2.2]{BPP}; the only part that
		is not already contained there is the free-boundary (Neumann)
		treatment, which is
		carried out in Appendix~\ref{sec:eps-appendix}.  We outline the
		proof here.
		
		Set $u_p^\varepsilon:=e^{-w_p^\varepsilon/(p-1)}$, so that
		\eqref{eq:eps-regularized} is equivalent to
		$\Delta_p^\varepsilon u_p^\varepsilon=0$ with
		$\langle\nabla u_p^\varepsilon,\mathbf{n}\rangle=0$ on
		$\partial M\cap(D\setminus\Omega)$ and the corresponding
		Dirichlet data.  Since
		$A_\varepsilon(\xi)=(|\xi|^2+\varepsilon^2)^{(p-2)/2}\xi$ is
		strictly monotone, existence and uniqueness of
		$u_p^\varepsilon$ follow from the standard theory of monotone
		operators, and the comparison principle gives
		$0<c\le u_p^\varepsilon\le1$ with $c$ independent of
		$\varepsilon$.
		
		\noindent\textit{(i).}  The interior $C^{1,\alpha}$ estimate is
		due to DiBenedetto~\cite{DiBenedetto83} (Tolksdorf~\cite{Tolksdorf84}
		for the more general class), while the boundary $C^{1,\beta}$
		estimate is due to Lieberman~\cite[Theorems~1 and~2]{Lieberman88},
		applied respectively to the Dirichlet and conormal parts away from
		their intersection.  At the initial Dirichlet--Neumann edge, the
		uniform estimate follows from the reflection argument in
		Appendix~\ref{sec:eps-appendix}.  All constants are independent of
		$\varepsilon$.  No regularity at the artificial outer Dirichlet face
		$\partial_{\operatorname{int}}D$ is asserted or used.  The energy
		bound gives weak compactness in
		$W^{1,p}$, strict monotonicity identifies every weak limit with the
		unique solution $u_p:=e^{-w_p/(p-1)}$ of the limiting mixed problem,
		and the uniform $C^{1,\beta}$ estimate then gives, by
		Arzel\`a--Ascoli, convergence of the whole family in
		$C^1_{\mathrm{loc}}$.
		
		\noindent\textit{(ii).}  The $\varepsilon$-uniform local
		$W^{1,2}$ estimate needed here follows directly from
		Schulz--Violo~\cite[Theorem~1.4]{SV25}.  Indeed, with
		\[
			\Psi_\varepsilon(s)=(s^2+\varepsilon^2)^{(p-2)/2},
			\qquad
			q_\varepsilon
			=\Psi_\varepsilon(|\nabla u_p^\varepsilon|)
			 |\nabla u_p^\varepsilon|,
		\]
		the theorem gives an $\varepsilon$-uniform local $W^{1,2}$ bound for
		$q_\varepsilon$ and an $\varepsilon$-uniform local Lipschitz bound for
		$u_p^\varepsilon$.  The homogeneous Neumann condition is precisely
		what makes $u_p^\varepsilon$ a local weak solution on the metric-measure
		space $M$, including at its physical boundary.  Since the map
		$s\mapsto\Psi_\varepsilon(s)s$ has, on the resulting bounded gradient
		range, an inverse with Lipschitz constant independent of
		$\varepsilon$, it follows that
		$\{|\nabla u_p^\varepsilon|\}$ is locally bounded in $W^{1,2}$.
		The logarithmic change of variables then shows that
		$\{|\nabla w_p^\varepsilon|\}$ is bounded in
		$W^{1,2}_{\mathrm{loc}}$ and
		$|\nabla w_p^\varepsilon|\rightharpoonup|\nabla w_p|$ weakly in
		$W^{1,2}_{\mathrm{loc}}$.  The uniformity and the inverse-map step are
		verified in Appendix~\ref{sec:eps-appendix}.
		
		\noindent\textit{(iii).}  For each fixed $\varepsilon>0$, Sard's
		theorem gives, for a.e.\ $t\in(0,\inf_{\partial D}w_p)$, that
		$\Sigma_t^\varepsilon=\{w_p^\varepsilon=t\}$
		is smooth; the conditions $w_p^\varepsilon=0$ on
		$\partial_{\operatorname{int}}\Omega$ and
		$w_p^\varepsilon=w_p\ge\inf_{\partial D}w_p$ on
		$\partial D\cap\operatorname{int}(M)$ keep $\Sigma_t^\varepsilon$
		away from the corners, and the Neumann condition
		$\langle\nabla w_p^\varepsilon,\mathbf{n}\rangle=0$ forces
		$\Sigma_t^\varepsilon$ to meet $\partial M$ orthogonally.
		
		\noindent\textit{(iv).}  Local $C^{1,\beta}$ regularity follows by
		compactness from (i), while smoothness of $w_p$ on
		$D\setminus\bigl(\Omega\cup\operatorname{Crit}(w_p)\bigr)$ follows
		from ordinary uniformly elliptic regularity there.

		\noindent\textit{(v).}
		The assertion at the initial Dirichlet--Neumann edge follows by
		evenly reflecting the equation across the Neumann face.  The
		orthogonality of the two faces turns the reflected initial face into
		a regular Dirichlet boundary.  Boundary regularity, the Hopf lemma,
		and the resulting uniform ellipticity in a collar of the initial
		face then give the stated conclusion.  The details are included in
		Appendix~\ref{sec:eps-appendix}.
	\end{proof}

	\begin{lem}[Cheng--Yau-type gradient estimate]\label{lem:cheng-yau}
		On a complete Riemannian manifold with
		$\operatorname{Ric}\ge0$ and convex boundary $\partial M$,
		the function $w_p=-(p-1)\log u_p$ defined above satisfies the pointwise
		gradient bound
		\[
		|\nabla w_p|(x)\le C_0\,d(x,o)^{-1}
		\]
		for all $x$ with $d(x,o)$ sufficiently large, where
		$C_0=C_0(n,p)$ is allowed to depend on $p$.
	\end{lem}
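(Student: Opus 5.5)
Our plan is to prove the bound as a local gradient estimate for positive $p$-harmonic functions with homogeneous Neumann data, applied on a ball of radius comparable to $d(x,o)$, and then to rescale. Since $\nabla w_p=-(p-1)\nabla u_p/u_p$, it suffices to show that
\[
\frac{|\nabla u_p|}{u_p}(x)\le \frac{C(n,p)}{R}
\]
whenever $u_p>0$ is $p$-harmonic in $B_{2R}(x)\cap M$ with $\langle\nabla u_p,\mathbf n\rangle=0$ on $\partial M\cap B_{2R}(x)$. For $d(x,o)$ large, take $R=d(x,o)/4$. Then $B_{2R}(x)$ is disjoint from the compact set $\Omega$, so only the physical boundary can interfere, and the claim follows with $C_0=(p-1)C/4\cdot$const.

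For the local estimate we follow the Kotschwar--Ni / Wang--Zhang route for $v=-(p-1)\log u=w_p$, which satisfies $\Delta_p v=|\nabla v|^p$. The steps are as follows.

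\textbf{Bochner formula.} Let $\mathcal L$ be the linearized operator of $\Delta_p$ at $v$. This operator is degenerate elliptic, uniformly so where $\nabla v\neq0$. We compute the $p$-Bochner formula for $f=|\nabla v|^2$. Using $\operatorname{Ric}\ge0$ and a refined Kato inequality, we obtain
\[
\mathcal L f\ge c_{n,p}\,f^{\,p/2+1}-C\,|\nabla v|^{p-1}|\nabla f|
\]
at points where $\nabla v\neq0$.

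\textbf{Maximum principle.} We apply the maximum principle to $G=\eta f$, where $\eta=\phi(d(x,\cdot)/R)$ is a cutoff. Laplacian comparison for $d(x,\cdot)$ holds on $M$ because convexity of $\partial M$ keeps minimizing geodesics away from the boundary, as already used in Lemma~\ref{lem:regularity}. At an interior maximum point, the standard computation gives $f\le C/R^2$ there.

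\textbf{Boundary maximum.} We must rule out, or handle, a maximum of $G$ on $\partial M$. Since $\partial_{\mathbf n}v=0$, the classical Reilly-type identity gives
\[
\partial_{\mathbf n}|\nabla v|^2=-2\operatorname{II}(\nabla v,\nabla v)\le0
\]
by convexity (sign convention with $\mathbf n$ inward). Moreover $\partial_{\mathbf n}\eta\le0$ in the barrier sense, by the first variation argument from Lemma~\ref{lem:regularity}. Hence $\partial_{\mathbf n}G\le0$ at a boundary maximum. If the inequality were strict, or if we perturb $G$ slightly, say $G_\delta=\eta f-\delta\,\eta\, d_{\partial M}$ near the boundary, Hopf's lemma would contradict a boundary maximum. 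We therefore reduce to an interior maximum.

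\textbf{Regularity.} The degeneracy at $\operatorname{Crit}(v)$ and the lack of smoothness are handled using the regularized solutions $w_p^\varepsilon$ of Lemma~\ref{lem:eps-regularization}. These are smooth up to the Neumann face away from the edges, and the estimate is uniform in $\varepsilon$. We then pass to the limit using the $C^1$ convergence in Lemma~\ref{lem:eps-regularization}\eqref{item:C1}.

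The main obstacle is the boundary step, and in particular its interaction with the regularization. The $\varepsilon$-operator's linearization must still produce the favourable Neumann sign. The cutoff must also be handled where $d(x,\cdot)$ is non-smooth at boundary points; there we plan to use Calabi's trick together with the barrier sign $\partial_{\mathbf n}d\le0$. As a fallback, we would use the RCD$(0,3)$ structure of $(M,d,\mathcal H^3)$: the gradient estimate for $p$-harmonic functions on RCD$(0,N)$ spaces, in the form of the Cheng--Yau estimate of Wang--Zhang type extended to RCD spaces. This applies directly because the Neumann condition makes $u_p$ a local $p$-harmonic function on the metric measure space, exactly as in the proof of Lemma~\ref{lem:eps-regularization}(ii).
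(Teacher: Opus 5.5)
Your fallback is exactly the paper's proof. The paper uses Han's characterization to see that $(M,d,\operatorname{vol}_g)$ is $\operatorname{RCD}(0,n)$. It notes that the weak mixed formulation makes $u_p$ $p$-harmonic in the metric-measure sense on $B(x,\tfrac12 d(x,\Omega))\subset M\setminus\Omega$, also when this ball meets $\partial M$. It then applies the Schulz--Violo Cheng--Yau estimate and compares $d(x,\Omega)$ with $d(x,o)$.

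Your primary route, a Wang--Zhang/Kotschwar--Ni maximum-principle argument, is genuinely different, but as written its boundary step fails.
\begin{itemize}
\item \emph{The Neumann identity has the wrong sign.} With $\mathbf n$ inward and $\operatorname{II}(X,Y)=\langle\nabla_XY,\mathbf n\rangle$, differentiate $\langle\nabla v,\mathbf n\rangle=0$ along $X\in T\partial M$. This gives $\nabla^2v(X,\mathbf n)=\operatorname{II}(X,\nabla v)$, hence $\partial_{\mathbf n}|\nabla v|^2=+2\operatorname{II}(\nabla v,\nabla v)\ge0$. The paper uses this same identity when computing $\langle Y_\varepsilon,\mathbf n\rangle$.
\item \emph{The cutoff has the wrong sign too.} Since $\partial_{\mathbf n}d(x,\cdot)\le0$ and $\phi'\le0$, you get $\partial_{\mathbf n}\eta\ge0$, not $\le0$.
\item \emph{Your conclusion gives no contradiction.} The inequality $\partial_{\mathbf n}G\le0$ holds automatically at every boundary maximum. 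Hopf's lemma predicts precisely the strict version $\partial_{\mathbf n}G<0$, so there is nothing to contradict. Subtracting $\delta\eta\,d_{\partial M}$ only pushes further in the same direction.
\end{itemize}

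The fix is to use the correct signs.
\begin{itemize}
\item At a boundary maximum $x_0$ you have $\partial_{\mathbf n}G=f\,\partial_{\mathbf n}\eta+2\eta\operatorname{II}(\nabla v,\nabla v)\ge0$. For $\eta$ this holds in the barrier sense: Calabi's upper barrier for the distance only lowers $\eta$, so $x_0$ stays a maximum.
\item Maximality then forces $\partial_{\mathbf n}G(x_0)=0$. The tangential derivatives vanish as well, so $\nabla G(x_0)=0$.
\item The Hessian quadratic form is nonpositive on the half-space of inward directions and is even, so it is nonpositive definite.
\item The interior computation therefore applies verbatim at $x_0$, with no Hopf lemma or perturbation needed.
\end{itemize}

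The $\varepsilon$-regularization is unnecessary and would add work. Wherever $G>0$ one has $\nabla v\neq0$, so the equation is uniformly elliptic nearby and $v$ is smooth there, up to the Neumann face. By contrast, $w_p^\varepsilon$ does not satisfy a scale-invariant equation after the logarithmic substitution, so its Bochner formula would not be the clean one.

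With these corrections your route is an independent, elementary proof that makes explicit where convexity enters. The paper's route is two lines but rests on the $\operatorname{RCD}$ machinery and the Schulz--Violo theorem.
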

	
	\begin{proof}
		Endow $M$ with its intrinsic distance and Riemannian volume measure.
		By~\cite[Corollaries~2.5 and~2.6]{Han20}, the assumptions
		$\operatorname{Ric}\ge0$ and $\operatorname{II}\ge0$ imply that
		$(M,d,\operatorname{vol}_g)$ is an $\operatorname{RCD}(0,n)$ space.
		For $x$ sufficiently far from $\Omega$, set
		\[
		R=\frac12d(x,\Omega).
		\]
		Then the intrinsic ball $B(x,R)$ is contained in $M\setminus\Omega$.
		The weak formulation of the mixed problem, including the homogeneous
		Neumann condition on the physical boundary, says precisely that $u_p$
		is $p$-harmonic on this ball in the metric-measure sense.  Therefore
		the Cheng--Yau-type estimate of
		Schulz--Violo~\cite[Corollary~1.2]{SV25} gives
		\[
		|\nabla\log u_p|(x)\le \frac{C(n,p)}{R}.
		\]
		Since $\Omega$ is compact,
		$d(x,\Omega)\ge d(x,o)-C_\Omega$ for some $C_\Omega<\infty$;
		hence $R\ge c\,d(x,o)$ when $d(x,o)$ is sufficiently large.
		Finally,
		$|\nabla w_p|=(p-1)|\nabla\log u_p|$, which proves the claim.
	\end{proof}
	
	\medskip\noindent
	\textit{Mean curvature of the level sets.}
	A direct computation from the equations satisfied by
	$w_p^\varepsilon$ and $w_p$
	(see~\cite[(2.2), (2.3), and~(2.5)]{BPP}) gives
	the following formulas for the mean curvature of their level sets
	at regular points.  For the $\varepsilon$-regularised solution,
	\begin{equation}\label{eq:H-epsilon}
		H^\varepsilon
		= \Bigl(|\nabla w_p^\varepsilon|
		- (p-1)\frac{\langle\nabla|\nabla w_p^\varepsilon|,
			\nabla w_p^\varepsilon\rangle}
		{|\nabla w_p^\varepsilon|^{2}}\Bigr)
		\Bigl(1+\frac{2-p}{p-1}\,\theta_\varepsilon\Bigr),
	\end{equation}
	where $\theta_\varepsilon\in[0,1]$ is defined by
	$\theta_\varepsilon
	=1-|\nabla e^{-w_p^\varepsilon/(p-1)}|^{2}
	/|\nabla e^{-w_p^\varepsilon/(p-1)}|_{\varepsilon}^{2}$.
	For the limit function $w_p$,
	\begin{equation*}
		H
		= |\nabla w_p|
		- (p-1)\frac{\langle\nabla|\nabla w_p|,\nabla w_p\rangle}
		{|\nabla w_p|^{2}}.
	\end{equation*}
	Both formulas hold at every regular point of the corresponding
	level sets, i.e.\ outside $\operatorname{Crit}(w_p^\varepsilon)$ (resp.\
	$\operatorname{Crit}(w_p)$).
	
	We use the normalized $p$-capacity introduced in
	\eqref{eq:capacity-defn}.  For the fixed obstacle $\Omega$, its
	variational identification with the energy of $u_p$ is
	\eqref{eq:capacity-def}.
	
	For $t\ge0$, set
	\[
	\Omega_t^{(p)} = \{x\in M : w_p(x)\le t\}\cup\Omega,
	\qquad
	\Sigma_t = \partial_{\operatorname{int}}\Omega_t^{(p)}
	= \overline{\partial\Omega_t^{(p)}\cap\operatorname{int}(M)}.
	\]
	By coarea, for a.e.\ $t>0$ the set $\Sigma_t$ is countably
	$\mathcal{H}^2$-rectifiable and
	$\mathcal{H}^2(\Sigma_t\cap\operatorname{Crit}(w_p))=0$; its regular
	part is smooth and meets $\partial M$ orthogonally.  No global
	$C^{1,\alpha}$ regularity of the entire level set is asserted here.
	
	\begin{lem}\label{lem:capacity}
		For a.e.\ $t\in[0,+\infty)$,
		\begin{equation}
			C_p(\Sigma_t)
			= \frac{1}{4\pi}\int_{\Sigma_t}
			\Bigl(\frac{|\nabla w_p|}{3-p}\Bigr)^{p-1}\dd\mathcal{H}^{2},
			\label{eq:capacity-levelset}
		\end{equation}
		and $C_p(\Sigma_t)=e^{t}\,C_p(\Sigma_0)$ where
		$\Sigma_0=\partial_{\operatorname{int}}\Omega$.
		
	\end{lem}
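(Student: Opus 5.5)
The plan has three steps: identify the capacity of the level set with the energy of a rescaled potential, rewrite that energy as a flux integral over $\Sigma_t$ via the divergence theorem, and read off the exponential growth from the rescaling factor.

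\emph{Step 1: the rescaled potential.} Fix a level $t>0$ and set $\Omega_t=\Omega_t^{(p)}$. Since $u_p=e^{-w_p/(p-1)}$, the function
\[
v_t:=e^{t/(p-1)}u_p=e^{-(w_p-t)/(p-1)}
\]
has the following properties on $M\setminus\Omega_t$. It is $p$-harmonic, because the equation is invariant under multiplication by a constant. It equals $1$ on $\Sigma_t$. It satisfies the Neumann condition on $\partial M\setminus\Omega_t$ and tends to $0$ at infinity. So $v_t$ solves \eqref{ph} with $\Omega$ replaced by $\Omega_t$. The hypotheses on $\Omega$ (smooth interior face meeting $\partial M$ orthogonally) fail for a general level, so I will not invoke \eqref{eq:capacity-def} directly. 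Instead I rerun the variational identification of Appendix~\ref{sec:pdepart-appendix}: $v_t$ is the energy minimiser among functions $\ge1$ on $\Omega_t$ with the Neumann condition free. This uses only the weak formulation, uniqueness from strict monotonicity, and the transverse exhaustion. It gives
\[
C_p(\Sigma_t)=\frac1{4\pi}\Bigl(\frac{p-1}{3-p}\Bigr)^{p-1}\int_{M\setminus\Omega_t}|\nabla v_t|^p\,\dd\mathcal H^3 .
\]
Because $|\nabla v_t|=e^{t/(p-1)}|\nabla u_p|$, this energy is $e^{t}$ times the energy of $u_p$ over the smaller region $M\setminus\Omega_t$.

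\emph{Step 2: the flux identity.} The weak equation for $u_p$ is tested against $(\min(u_p,s)-\delta)_+$ type truncations, with Neumann boundary terms vanishing. For a.e. level this shows that the flux
\[
\int_{\{u_p=s\}}|\nabla u_p|^{p-1}\,\dd\mathcal H^2
\]
is independent of $s$. Call this constant $\Phi$. By coarea,
\[
\int_{\{u_p<s\}}|\nabla u_p|^p=\int_0^s\!\!\int_{\{u_p=\sigma\}}|\nabla u_p|^{p-1}\,\dd\mathcal H^2\,\dd\sigma=s\,\Phi .
\]
Taking $s=e^{-t/(p-1)}$, which is the value of $u_p$ on $\Sigma_t$, the energy of $v_t$ becomes $e^{2t/(p-1)}\,e^{-t/(p-1)}\,\Phi=e^{t/(p-1)}\Phi$. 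Now express the flux through $w_p$ using $|\nabla u_p|=u_p|\nabla w_p|/(p-1)$ on $\Sigma_t$:
\[
\Phi=e^{-t}\int_{\Sigma_t}\Bigl(\frac{|\nabla w_p|}{p-1}\Bigr)^{p-1}\dd\mathcal H^2 .
\]
Multiplying by the normalising constant, the factors $(p-1)^{p-1}$ cancel and the prefactors combine exactly into \eqref{eq:capacity-levelset}.

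\emph{Step 3: exponential growth.} Apply Step 2 at $t=0$, where $\Omega_0=\Omega$ and the boundary is smooth, so $\Phi=C_p(\Sigma_0)\cdot 4\pi\,(3-p)^{p-1}/(p-1)^{p-1}$. Since $\Phi$ is independent of the level, combining this with the formula of Step 2 at level $t$ gives $C_p(\Sigma_t)=e^{t}C_p(\Sigma_0)$.

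\emph{Main obstacle.} The delicate point is Step 1 together with the flux constancy at a.e. non-smooth level that touches $\partial M$. I would handle it by working with the Lipschitz truncations $\min(v_t,1)$ as competitors and by approximating the test functions in the mixed Sobolev space, so that the edge of $\Sigma_t$ on $\partial M$ contributes nothing. This is justified by the Neumann condition and by $\mathcal H^2(\Sigma_t\cap\operatorname{Crit}(w_p))=0$. The a.e. restriction comes from the coarea/Sard-type selection of levels.
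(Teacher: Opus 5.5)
Your route is the paper's. You identify $v_t=e^{t/(p-1)}u_p$ as the capacitary potential of $\Omega_t^{(p)}$ through the variational argument of Appendix~\ref{sec:pdepart-appendix}, get constancy of the flux from the weak equation and coarea, and convert the $u_p$-flux into the $w_p$-integral on $\Sigma_t$. Working with $u_p$-levels instead of testing with $\eta(w_p)$, $\eta\in C_c^\infty((0,+\infty))$, is only cosmetic.

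You do need to fix the exponents. Since $|\nabla v_t|^p=e^{pt/(p-1)}|\nabla u_p|^p$ and $\int_{\{u_p<s\}}|\nabla u_p|^p=s\Phi$ with $s=e^{-t/(p-1)}$, the energy of $v_t$ is $e^{pt/(p-1)}e^{-t/(p-1)}\Phi=e^{t}\Phi$. It is neither $e^{t/(p-1)}\Phi$ (in Step~2 you used $e^{2t/(p-1)}$ in place of $e^{pt/(p-1)}$), nor $e^t$ times the energy of $u_p$ on $M\setminus\Omega_t$, as you wrote in Step~1. Only the value $e^t\Phi$ makes the prefactors combine into \eqref{eq:capacity-levelset} and gives $C_p(\Sigma_t)=e^tC_p(\Sigma_0)$ in Step~3. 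The value you wrote would leave a spurious factor $e^{(2-p)t/(p-1)}$ in both conclusions.

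Also, $(\min(u_p,s)-\delta)_+$ equals $s-\delta$ on $\partial_{\operatorname{int}}\Omega$, so as it stands it is not an admissible test function. Either keep the resulting initial-face flux term, or test with $\eta(u_p)$ for $\eta$ Lipschitz with compact support in $(0,1)$. The latter is exactly the paper's choice after the change of variables $u_p=e^{-w_p/(p-1)}$.
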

	\begin{proof}
		We first prove that the $p$-flux is constant without assuming
		that every level is regular.  For
		$\eta\in C_c^{\infty}((0,+\infty))$, use $\eta(w_p)$ as a test
		function in the weak equation for $u_p$.  The Neumann condition is
		incorporated in this weak formulation.  Since
		$\nabla w_p=-(p-1)\nabla u_p/u_p$, the coarea formula gives
		\begin{align*}
			0
			&=\int_{M\setminus\Omega}|\nabla u_p|^{p-2}
			\langle\nabla u_p,\nabla(\eta(w_p))\rangle
			\,\dd\mathcal{H}^3 \\
			&=-\int_0^{+\infty}\eta'(t)
			\left(\int_{\Sigma_t}|\nabla u_p|^{p-1}
			\,\dd\mathcal{H}^2\right)\dd t.
		\end{align*}
		Consequently, there is a constant $A$ such that
		\[
		\int_{\Sigma_t}|\nabla u_p|^{p-1}\,\dd\mathcal{H}^2=A
		\qquad\text{for a.e. }t>0.
		\]
		A second application of coarea, using
		$|\nabla w_p|=(p-1)|\nabla u_p|/u_p$ and
		$u_p=e^{-w_p/(p-1)}$, yields
		\[
		\int_{M\setminus\Omega}|\nabla u_p|^p\,\dd\mathcal{H}^3
		=\frac{A}{p-1}\int_0^{+\infty}e^{-t/(p-1)}\,\dd t=A.
		\]
		Thus
		$C_p(\Sigma_0)=\frac1{4\pi}
		(\frac{p-1}{3-p})^{p-1}A$.
		
		For a.e. $t>0$, the $p$-capacity potential of
		$\Omega_t^{(p)}$ is not $u_p$ itself, but
		$v_t:=e^{t/(p-1)}u_p$.  Indeed, $v_t=1$ on $\Sigma_t$, it satisfies
		the same homogeneous Neumann condition on $\partial M$, and it
		vanishes at infinity.  The truncation and variational argument used
		in Appendix~\ref{sec:pdepart-appendix} identifies $v_t$ as the
		minimizer in the relaxed energy class.  Therefore,
		\begin{align*}
		C_p(\Sigma_t)
		&=\frac1{4\pi}\Bigl(\frac{p-1}{3-p}\Bigr)^{p-1}
		e^{pt/(p-1)}\int_{\{w_p>t\}}|\nabla u_p|^p
		\,\dd\mathcal{H}^3 \\
		&=\frac1{4\pi}\Bigl(\frac{p-1}{3-p}\Bigr)^{p-1}
		e^{pt/(p-1)}\frac{A}{p-1}
		\int_t^{+\infty}e^{-s/(p-1)}\,\dd s \\
		&=e^t C_p(\Sigma_0).
		\end{align*}
		Finally, on $\Sigma_t$,
		$|\nabla w_p|^{p-1}=(p-1)^{p-1}
		e^t|\nabla u_p|^{p-1}$.  Integrating this identity and using the
		definition of $A$ gives~\eqref{eq:capacity-levelset}.
	\end{proof}
	
	\subsection{Monotone quantities}
	
	For a.e.\ $t\ge0$, let $H$ be the mean curvature of $\Sigma_t$
	with respect to $\nu=\nabla w_p/|\nabla w_p|$.  Define
	\begin{equation*}
		F_p(t)
		:= \int_{\Sigma_t}
		\Bigl(\frac{H\,|\nabla w_p|}{3-p}
		-\frac{|\nabla w_p|^{2}}{(3-p)^{2}}\Bigr)
		\dd\mathcal{H}^{2},
	\end{equation*}
	\begin{equation*}
		G_p(t)
		:= \int_{\Sigma_t}
		\frac{|\nabla w_p|^{2}}{(3-p)^{2}}\,\dd\mathcal{H}^{2}.
	\end{equation*}
	Both are well-defined $L^{1}_{\mathrm{loc}}$-functions
	(see~\cite{BPP}).
	Expanding $(H/2-|\nabla w_p|/(3-p))^{2}\ge0$ gives
	\[
	\frac{H|\nabla w_p|}{3-p}-\frac{|\nabla w_p|^{2}}{(3-p)^{2}}
	\le\frac{H^{2}}{4},
	\]
	hence
	\begin{equation}\label{eq:Fp-bound}
		F_p(t)\le\frac14\int_{\Sigma_t}H^{2}\,\dd\mathcal{H}^{2}.
	\end{equation}
	For $x\in\Sigma_t$, one has
	$u_p(x)=e^{-t/(p-1)}$.  Hence, for $t$ sufficiently large, the
	lower bound in Lemma~\ref{lem:regularity} gives
	\[
		e^{-t/(p-1)}
		\ge C^{-1}d(x,o)^{-\frac{3-p}{p-1}},
		\qquad\text{and therefore}\qquad
		d(x,o)\ge c\,e^{t/(3-p)}.
	\]
	The Cheng--Yau-type estimate in Lemma~\ref{lem:cheng-yau} consequently
	yields
	\[
		\sup_{\Sigma_t}|\nabla w_p|^{3-p}\le C e^{-t}.
	\]
	Combining this estimate with Lemma~\ref{lem:capacity}, we obtain
	\[
	G_p(t)
	\le\frac{4\pi\,C_p(\Sigma_t)}{(3-p)^{3-p}}
	\sup_{\Sigma_t}|\nabla w_p|^{3-p}
	\le C\,e^{-t}C_p(\Sigma_t)=C\,C_p(\Sigma_0),
	\]
	and $G_p(t)$ is uniformly bounded as $t\to+\infty$.
	
	\begin{lem}\label{lem:Fp-derivative}
		$F_p\in W^{1,1}_{\mathrm{loc}}(0,+\infty)$ and is nonincreasing
		on $(0,+\infty)$.  For a.e.\ $t>0$,
		\begin{align}
			F_p'(t) &= -\frac{1}{3-p}\int_{\Sigma_t}
			\biggl(
			\operatorname{Ric}(\nu,\nu)
			+|\mathring{h}|^{2}
			+\frac{|\nabla^{\top}|\nabla w_p||^{2}}{|\nabla w_p|^{2}}
			\nonumber\\
			&\qquad
			+\frac{3-p}{2(p-1)}
			\Bigl(H-\frac{2\,|\nabla w_p|}{3-p}\Bigr)^{\!2}
			\biggr)\dd\mathcal{H}^{2} \nonumber\\
			&\qquad
			-\int_{\Sigma_t\cap\partial M}
			\frac{\operatorname{II}(\nabla w_p,\nabla w_p)}
			{(3-p)\,|\nabla w_p|^{2}}\,\dd\mathcal{H}^{1}
			\;\le\;0,
			\label{eq:Fp-derivative}
		\end{align}
		where $\nu=\nabla w_p/|\nabla w_p|$, $\mathring{h}$ is the
		traceless second fundamental form of $\Sigma_t$, and
		$\nabla^{\top}$ is the tangential gradient.  Moreover,
		$G_p\in W^{2,1}_{\mathrm{loc}}(0,+\infty)$ with
		\begin{equation}
			G_p'(t)
			= \frac{1}{p-1}\int_{\Sigma_t}
			\Bigl(\frac{2\,|\nabla w_p|^{2}}{(3-p)^{2}}
			-\frac{H\,|\nabla w_p|}{3-p}\Bigr)
			\dd\mathcal{H}^{2}.
			\label{eq:Gp-derivative}
		\end{equation}
	\end{lem}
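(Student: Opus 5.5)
The plan is to prove the monotonicity formula first for the $\varepsilon$-regularised potentials $w_p^\varepsilon$ of Lemma~\ref{lem:eps-regularization}, on an auxiliary domain $D_j$ with $\inf_{\Gamma_j}w_p>T+\tau$, and then pass to the limit $\varepsilon\to0$. Define $F_p^\varepsilon(t)$ and $G_p^\varepsilon(t)$ on $\Sigma_t^\varepsilon$ in the obvious way. By Lemma~\ref{lem:eps-regularization}(iii), for a.e.\ $t$ these levels are smooth and meet $\partial M$ orthogonally. We avoid differentiating level by level. Instead, following \cite{BPP}, we write $\int_{s}^{T}\eta(t)F_p^\varepsilon(t)\,\dd t$ via coarea as a bulk integral over $\{s<w_p^\varepsilon<T\}$ of
\[
\eta(w)\,|\nabla w|\bigl(H^\varepsilon|\nabla w|/(3-p)-|\nabla w|^2/(3-p)^2\bigr).
\]
Using \eqref{eq:H-epsilon}, this integrand is a divergence-type expression. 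We then integrate by parts with a cutoff $\eta\in C_c^\infty((0,T))$.

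The interior computation is the Bochner formula for $|\nabla w|$ combined with the equation $\Delta_p^\varepsilon u^\varepsilon=0$. As in \cite[Section~3]{BPP}, it produces $-\operatorname{Ric}(\nu,\nu)$, $-|\mathring h|^2$, $-|\nabla^\top|\nabla w||^2/|\nabla w|^2$ and the square $\bigl(H-2|\nabla w|/(3-p)\bigr)^2$ with weight $(3-p)/(2(p-1))$. On top of these come error terms involving $\theta_\varepsilon$ and $\varepsilon$, which tend to zero in $L^1_{\mathrm{loc}}$ thanks to the uniform $W^{1,2}$ bound of Lemma~\ref{lem:eps-regularization}(ii).

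The new ingredient is the boundary term on $\partial M\cap\{s<w<T\}$. There we have to evaluate the normal flux, in the direction $\mathbf n$, of the vector field whose divergence we took; schematically this is $|\nabla w|\nabla|\nabla w|$ together with $\Delta w\,\nabla w$-type pieces. The Neumann condition $\langle\nabla w,\mathbf n\rangle=0$ kills the pieces proportional to $\nabla w$. For the remaining piece we differentiate $\langle\nabla w,\mathbf n\rangle=0$ along the tangent vector $\nabla w$ of $\partial M$, which gives $\nabla^2w(\nabla w,\mathbf n)=\operatorname{II}(\nabla w,\nabla w)$ up to a sign fixed by the inward convention. Hence
\[
\langle\nabla|\nabla w|,\mathbf n\rangle=\frac{\operatorname{II}(\nabla w,\nabla w)}{|\nabla w|}.
\]
After coarea along $\partial M$, where the level curves $\Sigma_t\cap\partial M$ have co-area factor $|\nabla w|$ because $\nabla w$ is tangent to $\partial M$, this produces exactly
\[
-\int_{\Sigma_t\cap\partial M}\frac{\operatorname{II}(\nabla w,\nabla w)}{(3-p)|\nabla w|^2}\,\dd\mathcal H^1 ,
\]
which is $\le0$ by convexity. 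The edges where $\Sigma_t^\varepsilon$ meets $\partial M$ contribute no extra terms, because orthogonality makes the boundary of the level set inside $\partial M$ transverse and smooth. Near $t=0$ the edge regularity in Lemma~\ref{lem:eps-regularization}(v) lets us also take $s$ close to $0$. The outer face is never reached, since $T<\inf_{\Gamma_j}w_p$.

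Passing to the limit, the $C^1$ convergence in Lemma~\ref{lem:eps-regularization}(i) handles every term except those quadratic in second derivatives. For those we use weak $W^{1,2}$ convergence together with lower semicontinuity; this is enough because the quadratic terms have a favourable sign. The result is a distributional inequality $F_p'\le-(\dots)$, together with an identity for the linear terms. To get the equality \eqref{eq:Fp-derivative} and $F_p\in W^{1,1}_{\mathrm{loc}}$, we argue as in \cite{BPP}. In the limit the formula holds directly, using $|\nabla w_p|\in W^{1,2}_{\mathrm{loc}}$ and $\mathcal H^2(\Sigma_t\cap\operatorname{Crit})=0$ for a.e.\ $t$. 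The critical set contributes nothing, because integrals of the form $\int\eta(w)\,\mathrm{div}(\dots)$ are estimated only on regular points.

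The identity \eqref{eq:Gp-derivative} for $G_p$ is obtained the same way. Writing $\int\eta G_p=\int\eta(w)|\nabla w|^3/(3-p)^2$ and integrating by parts in the direction $\nabla w$ produces no boundary term at all, since the boundary term is proportional to $\langle\nabla w,\mathbf n\rangle=0$. Then $G_p'$ is a combination of the integrands of $F_p$ and $G_p$, which gives $W^{2,1}_{\mathrm{loc}}$.

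I expect the main obstacle to be the justification of the boundary flux in the $\varepsilon$-scheme. The regularised operator must still satisfy the clean identity $\langle\nabla|\nabla w^\varepsilon|,\mathbf n\rangle=\operatorname{II}(\nabla w^\varepsilon,\nabla w^\varepsilon)/|\nabla w^\varepsilon|$. This needs smoothness up to $\partial M$ away from the edges, which Lemma~\ref{lem:eps-regularization} provides. The remaining difficulty is controlling this boundary term uniformly in $\varepsilon$ near critical points lying on $\partial M$. I would handle it by keeping the term with its sign, so that Fatou applies, rather than proving convergence.
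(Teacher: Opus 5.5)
Your proposal has a genuine gap: it does not establish $F_p\in W^{1,1}_{\mathrm{loc}}$ or the equality in \eqref{eq:Fp-derivative}. Your $\varepsilon$-regularisation can only give the one-sided distributional inequality ``$F_p'\le$ right-hand side of \eqref{eq:Fp-derivative}'', for two reasons. First, even at fixed $\varepsilon$ the field $Y_\varepsilon$ is singular on $\operatorname{Crit}(w_p^\varepsilon)$, and the gradient-cutoff error there is only known to have a favourable sign, so it is discarded. Second, the quadratic dissipations pass to the limit only by lower semicontinuity. Such an inequality says that $F_p'$ is a nonpositive Radon measure, so $F_p$ is merely $BV_{\mathrm{loc}}$. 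Both the $W^{1,1}_{\mathrm{loc}}$ regularity and the equality are used later: in endpoint continuity, for $G_p\in W^{2,1}_{\mathrm{loc}}$, and in Lemma~\ref{lem:Fp-Gp-relation}.

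For the equality you switch to a direct argument on $w_p$, and there the decisive step is missing. The field $Y$ of \eqref{eq:Y-def} is defined only on the regular set. The whole issue is whether its distributional divergence in $M\setminus\Omega$ has a singular part carried by $\operatorname{Crit}(w_p)$. Neither $|\nabla w_p|\in W^{1,2}_{\mathrm{loc}}$ nor $\mathcal{H}^2(\Sigma_t\cap\operatorname{Crit}(w_p))=0$ for a.e.\ $t$ rules this out. Saying that the integrals are ``estimated only on regular points'' assumes what has to be proved.

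What is needed, and what the paper does, is the gradient-cutoff argument of \cite[Step~1 of Theorem~3.1]{BPP}, applied to the limit function itself. You apply Gauss--Green to $\varphi(w_p)\chi_\delta(|\nabla w_p|)Y$ and prove that $E_\delta=\int\varphi(w_p)\chi_\delta'(|\nabla w_p|)\langle Y,\nabla|\nabla w_p|\rangle\,\dd\mathcal{H}^3\to0$ as $\delta\downarrow0$. This uses coarea on the sets $\{|\nabla w_p|=q\}$ and the exponent conditions $\alpha=3-p>(n-p)/(n-1)$ and $\alpha+p-2>0$ for $n=3$.

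In the free-boundary setting you must also check one further flux. Gauss--Green on the gradient annuli $\{r<|\nabla w_p|<q\}$ produces a physical-boundary flux $-\operatorname{II}(\nabla w_p,\nabla w_p)/|\nabla w_p|\le0$, which only strengthens the one-sided estimate used there. Your boundary computation handles the flux through $\partial M\cap\{s<w<T\}$ correctly, but not this one. Once $E_\delta\to0$, monotone convergence for the nonnegative volume and boundary dissipations gives the exact weak identity directly. No $\varepsilon$-regularisation is needed for this lemma; the paper uses it only for the weak Gauss--Bonnet theorem.

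Your claim that the $\theta_\varepsilon$-dependent terms vanish in $L^1_{\mathrm{loc}}$ by the uniform $W^{1,2}$ bound is also unjustified. Terms such as $\theta_\varepsilon|\nabla^\perp|\nabla w_p^\varepsilon||^2/|\nabla w_p^\varepsilon|$ can concentrate near critical points, where $\theta_\varepsilon$ is close to one. In the paper their vanishing is obtained only in Proposition~\ref{prop:convergence-second-fundamental-form}, whose proof relies on the exact identity of the present lemma. Your treatment of $G_p$ agrees with the paper.
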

	\begin{proof}
		On the regular set
		$M\setminus(\Omega\cup\operatorname{Crit}(w_p))$, define
		\begin{align}
			X &:= \frac{|\nabla w_p|\,\nabla w_p}{(3-p)^{2}},
			 \\[4pt]
			Y &:= \frac{1}{3-p}
			\Bigl(\frac{\Delta w_p\,\nabla w_p}{|\nabla w_p|}
			-\nabla|\nabla w_p|\Bigr)-X.
			\label{eq:Y-def}
		\end{align}
		The pointwise computations on this set give
		\begin{align}
			\operatorname{div}X
			&= \frac{|\nabla w_p|}{p-1}
			\Bigl(\frac{2|\nabla w_p|^{2}}{(3-p)^{2}}
			-\frac{H|\nabla w_p|}{3-p}\Bigr),
			\label{eq:div-X} \\[4pt]
			\operatorname{div}Y
			&= -\frac{|\nabla w_p|}{3-p}\biggl(
			\operatorname{Ric}(\nu,\nu)
			+|\mathring{h}|^{2}
			+\frac{|\nabla^{\!\top}|\nabla w_p||^{2}}{|\nabla w_p|^{2}}
			+\frac{3-p}{2(p-1)}
			\Bigl(H-\frac{2|\nabla w_p|}{3-p}\Bigr)^{\!2}
			\biggr).
			\label{eq:div-Y}
		\end{align}
		We justify these identities across the critical set by the
		gradient-cutoff procedure of
		\cite[Step~1 in the proof of Theorem~3.1]{BPP}.

		Choose a nondecreasing function
		$\chi\in C^{\infty}([0,+\infty))$ with $\chi=0$ on $[0,1]$
		and $\chi=1$ on $[2,+\infty)$, and put
		$\chi_\delta(r)=\chi(r/\delta)$ for $r\ge0$.  Let
		$\varphi\in\operatorname{Lip}_c((0,+\infty))$ be nonnegative.
		The vector field $\varphi(w_p)\chi_\delta(|\nabla w_p|)Y$ vanishes near
		$\operatorname{Crit}(w_p)$, and its support stays away from the
		initial and outer Dirichlet faces.  Gauss--Green therefore gives
		\begin{align}
		-\int_{M\setminus\Omega}\chi_\delta(|\nabla w_p|)
		\langle Y,\nabla[\varphi(w_p)]\rangle\,\dd\mathcal{H}^3
		&=\int_{M\setminus\Omega}\varphi(w_p)
		\chi_\delta(|\nabla w_p|)
		\operatorname{div}Y\,\dd\mathcal{H}^3 \nonumber\\
		&\quad+E_\delta
		-\int_{\partial M\setminus\Omega}\varphi(w_p)
		\chi_\delta(|\nabla w_p|)
		\langle Y,-\mathbf{n}\rangle\,\dd\mathcal{H}^2,
		\label{eq:Fp-cutoff-identity}
		\end{align}
		where
		\[
		E_\delta
		:=\int_{M\setminus\Omega}\varphi(w_p)
		\chi_\delta'(|\nabla w_p|)
		\langle Y,\nabla|\nabla w_p|\rangle\,\dd\mathcal{H}^3.
		\]
		To identify the cutoff term precisely, rescale $Y$ by the constant
		$-(3-p)$.  The level-set identity
		\[
		H=|\nabla w_p|-(p-1)
		\frac{\partial_\nu|\nabla w_p|}{|\nabla w_p|}
		\]
		gives
		\begin{equation*}
			-(3-p)Y
			=\nabla^{\!\top}|\nabla w_p|
			+(p-1)(\partial_\nu|\nabla w_p|)\nu
			-\frac{2-p}{3-p}|\nabla w_p|^2\nu.
		\end{equation*}
		The rescaled field on the left-hand side is exactly the cutoff field used in
		\cite[Step~1 in the proof of Theorem~3.1]{BPP} when
		$n=3$ and $\alpha=3-p$.  The hypotheses needed there are satisfied:
		by item~\ref{item:W12} of
		Lemma~\ref{lem:eps-regularization},
		$|\nabla w_p|\in W^{1,2}_{\mathrm{loc}}$, while
		\[
			\alpha=3-p>\frac{3-p}{2}=\frac{n-p}{n-1},
			\qquad \alpha+p-2=1>0.
		\]
		In particular, the coarea argument there
		proves
		\begin{equation}\label{eq:Fp-cutoff-error}
		E_\delta\longrightarrow0\qquad\text{as }\delta\downarrow0.
		\end{equation}
		Indeed,
		\[
		E_\delta=-\frac1{3-p}\int_{M\setminus\Omega}\varphi(w_p)
		\chi_\delta'(|\nabla w_p|)
		\langle-(3-p)Y,\nabla|\nabla w_p|\rangle\,\dd\mathcal{H}^3.
		\]
		The argument for the integral on the right-hand side uses only coarea on the
		hypersurfaces $\{|\nabla w_p|=q\}$ and the local second-order estimates for
		$w_p$.  The only new contribution in the free-boundary setting
		appears when that argument applies Gauss--Green to
		$\{r<|\nabla w_p|<q\}$.  On the physical boundary, the Neumann condition and
		Hessian symmetry give
		\begin{equation*}
		\langle Y,-\mathbf{n}\rangle
		=\frac{\operatorname{II}(\nabla w_p,\nabla w_p)}
		{(3-p)|\nabla w_p|}
		\ge0
		\qquad\text{on }\partial M\cap\{|\nabla w_p|>0\}.
		\end{equation*}
		Correspondingly,
		\[
		\langle-(3-p)Y,-\mathbf{n}\rangle
		=-\frac{\operatorname{II}(\nabla w_p,\nabla w_p)}
		{|\nabla w_p|}\le0.
		\]
		In the identity for the flux through the gradient annulus this term
		is subtracted from the physical-boundary side, and therefore only
		strengthens the one-sided estimate used in the cited proof.  Thus
		the conclusion that the gradient-level flux tends to zero, and hence
		\eqref{eq:Fp-cutoff-error}, remains valid.  No initial-edge term occurs because
		$\operatorname{supp}\varphi\Subset(0,+\infty)$.

		Denote the nonnegative expression in parentheses in
		\eqref{eq:div-Y} by $\mathcal{Q}$, and set it equal to zero on the
		critical set.  The same cutoff estimate gives its local
		coarea integrability.  Letting $\delta\downarrow0$ in
		\eqref{eq:Fp-cutoff-identity}, using
		\eqref{eq:Fp-cutoff-error} and monotone convergence for the
		nonnegative volume and boundary dissipations, yields
		\begin{align}
		-\int_0^{+\infty}\varphi'(t)F_p(t)\,\dd t
		&=-\frac1{3-p}\int_0^{+\infty}\varphi(t)
		\int_{\Sigma_t}\mathcal{Q}\,\dd\mathcal{H}^2\,\dd t \nonumber\\
		&\quad-\int_0^{+\infty}\varphi(t)
		\int_{\Sigma_t\cap\partial M}
		\frac{\operatorname{II}(\nabla w_p,\nabla w_p)}
		{(3-p)|\nabla w_p|^2}\,\dd\mathcal{H}^1\,\dd t.
		\label{eq:Fp-weak-derivative}
		\end{align}
		Here we used coarea in $M$ for the volume term and coarea on
		$\partial M$ for the boundary term; the Neumann condition implies
		$|\nabla^{\partial M}w_p|=|\nabla w_p|$ there.  Splitting a general test
		function into its positive and negative parts shows that
		\eqref{eq:Fp-weak-derivative} holds for every
		$\varphi\in\operatorname{Lip}_c((0,+\infty))$.  Hence
		$F_p\in W^{1,1}_{\mathrm{loc}}(0,+\infty)$ and its weak
		derivative is~\eqref{eq:Fp-derivative}.  In particular, $F_p$ is
		nonincreasing.

		For $G_p$, the same argument is simpler.  The field $X$ extends
		by zero across the critical set and
		$\langle X,\mathbf{n}\rangle=0$ on $\partial M$, so no physical
		boundary term occurs.  The weak form of~\eqref{eq:div-X} and
		coarea give~\eqref{eq:Gp-derivative}.  Comparing this formula with
		the definitions of $F_p$ and $G_p$ gives
		\[
		G_p'=\frac{G_p-F_p}{p-1}.
		\]
		Since $F_p,G_p\in W^{1,1}_{\mathrm{loc}}$, it follows that
		$G_p\in W^{2,1}_{\mathrm{loc}}(0,+\infty)$.
	\end{proof}

	\begin{lem}[Endpoint continuity]\label{lem:Fp-endpoint}
		The locally absolutely continuous representative of $F_p$ on
		$(0,+\infty)$ extends continuously to the geometric initial value:
		\begin{equation}\label{eq:Fp-endpoint}
			F_p(0+):=\lim_{t\downarrow0}F_p(t)=F_p(0).
		\end{equation}
		With this extension, $F_p$ is nonincreasing on $[0,+\infty)$.
	\end{lem}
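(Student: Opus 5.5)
The plan is to exploit the collar $U$ of $\partial_{\operatorname{int}}\Omega$ from item~\ref{item:initial-edge} of Lemma~\ref{lem:eps-regularization}. There $|\nabla w_p|\ge c>0$, $w_p\in C^{1,\beta}(\overline{U\setminus\Omega})\cap W^{2,2}(U\setminus\Omega)$, and $w_p$ is smooth away from the Dirichlet--Neumann edge $E$. Choose $t_0>0$ so small that $\{w_p\le t_0\}\setminus\Omega\subset U$; this is possible because $w_p$ is continuous and vanishes on the compact face $\partial_{\operatorname{int}}\Omega$. For $t\in[0,t_0]$ the level sets $\Sigma_t$ then contain no critical points. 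The map $\Phi:\Sigma_0\times[0,t_0]\to\overline{U\setminus\Omega}$ obtained by flowing along $\nabla w_p/|\nabla w_p|^2$ is a $C^{0,\beta}$-in-space, Lipschitz-in-$t$ parametrisation. The flow is tangent to $\partial M$ because of the Neumann condition, so the edge $\Sigma_t\cap\partial M$ is carried to itself.

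The core of the argument is to work on the slab $\{0<w_p<t\}$ with $t\le t_0$ and apply the divergence theorem directly to the vector field $Y$ of \eqref{eq:Y-def}. No cutoff is needed, since there are no critical points. Because $w_p\in W^{2,2}$ and $|\nabla w_p|\ge c$, both $Y$ and $\operatorname{div}Y$ are integrable on the slab. The identity \eqref{eq:div-Y} holds pointwise a.e., with the right-hand side in $L^1$ by $W^{2,2}$ (the $|\mathring h|^2$ and Ricci terms are controlled by $|\nabla^2 w_p|^2/|\nabla w_p|$). Gauss--Green for $W^{1,1}$ fields on a Lipschitz domain then gives
\[
F_p(t)-F_p(0)=\int_{\{0<w_p<t\}}\operatorname{div}Y\,\dd\mathcal H^3-\int_{\partial M\cap\{0<w_p<t\}}\langle Y,-\mathbf n\rangle\,\dd\mathcal H^2 .
\]
Here $F_p(t)=-\int_{\Sigma_t}\langle Y,\nu\rangle$ up to the sign convention used in the proof of Lemma~\ref{lem:Fp-derivative}. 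Both integrals on the right are absolutely convergent, so they tend to $0$ as $t\downarrow0$, and this yields \eqref{eq:Fp-endpoint}.

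The identification $F_p(0)=-\int_{\Sigma_0}\langle Y,\nu\rangle$ requires the trace of $Y$ on $\Sigma_0$. That trace involves $\nabla|\nabla w_p|$, which is only in $L^2$. It is equally natural to define $F_p(0)$ through the geometric $H$ of the smooth surface $\Sigma_0$; matching the two uses the formula $H=|\nabla w_p|-(p-1)\partial_\nu|\nabla w_p|/|\nabla w_p|$ up to $\Sigma_0$. This is valid where $w_p$ is smooth up to the Dirichlet face away from $E$, and the edge $E$ has $\mathcal H^2$-measure zero.

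\textbf{Main obstacle.} The delicate point is the trace and continuity at the edge $E$, where only $C^{1,\beta}\cap W^{2,2}$ regularity is available. To handle it I would use the $\varepsilon$-regularisations. Item~\ref{item:initial-edge} gives $w_p^\varepsilon$ uniformly bounded in $W^{2,2}(U\setminus\Omega)$ with $|\nabla w_p^\varepsilon|\ge c/2$. The flux-of-$Y$ identity also holds for $w_p^\varepsilon$; alternatively, one can exploit the even reflection across $\partial M$ from Appendix~\ref{sec:eps-appendix}, which turns the edge into interior points of a regular Dirichlet face, where boundary regularity gives smoothness. After reflection, $\Sigma_0$ becomes a smooth closed surface and the edge disappears. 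The boundary term $\int\langle Y,-\mathbf n\rangle$ becomes $\operatorname{II}$-weighted and nonnegative, so it is handled by monotone convergence.

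Finally, monotonicity on $[0,+\infty)$ follows by combining this continuity with Lemma~\ref{lem:Fp-derivative}.
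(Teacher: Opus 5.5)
Your architecture matches the paper's: work in the Hopf collar of item~(v) of Lemma~\ref{lem:eps-regularization}, apply Gauss--Green to $Y$ on the slab $\{0<w_p<t\}$, and let the volume term and the physical-boundary term tend to zero. However, the step you flag as the main obstacle is the actual content of the lemma, and none of your proposed resolutions closes it.

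From $w_p\in C^{1,\beta}\cap W^{2,2}$ and $|\nabla w_p|\ge c$ you only get $Y\in L^2$ near the edge $E$. Since $Y$ is built from second derivatives, $Y\in W^{1,1}$ would require third derivatives of $w_p$ in $L^1$, which nothing provides. So ``Gauss--Green for $W^{1,1}$ fields'' is unjustified. A priori the distributional divergence of $Y$, or the boundary flux, could charge $E$. The pointwise identity~\eqref{eq:div-Y}, valid away from $E$, says nothing about this.

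The reflection route fails. The reflected metric $g(x,|y|)$ is in general only Lipschitz, because $\partial_y g_{ij}=-2\operatorname{II}_{ij}$ at $y=0$ changes sign under reflection. Boundary regularity after reflection therefore returns exactly the $C^{1,\beta}\cap W^{2,2}$ information of item~(v), not smoothness. The reflected initial face is not smooth either. The $\varepsilon$-route meets the same edge problem for each $w_p^\varepsilon$, which is also only uniformly $C^{1,\beta}\cap W^{2,2}$ at $E$, and you give no mechanism for passing the identity to the limit. As an aside, your flow map is not needed: $\nabla w_p/|\nabla w_p|^2$ is only $C^{0,\beta}$ at $E$, so its flow need not even be unique.

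The missing idea is that no regularity gain is required. $E$ is a curve, so it has codimension two in $M^3$ and zero $2$-capacity, and this is exactly matched by $Y\in L^2$. Take a logarithmic cutoff $\chi_\delta$ that vanishes on the $\delta^2$-neighbourhood of $E$, equals $1$ outside the $\delta$-neighbourhood, and satisfies $\int|\nabla\chi_\delta|^2\,\dd\mathcal{H}^3\le C/|\log\delta|$. Apply Gauss--Green to $\chi_\delta Y$ on the slab; every face is smooth away from $E$, so this is classical. The only new term is bounded by $\|Y\|_{L^2}\|\nabla\chi_\delta\|_{L^2}\to0$. The remaining terms converge by dominated convergence, since $\operatorname{div}Y\in L^1$ and the integrands on the initial face and on $\partial M$ are bounded in the collar.

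This argument proves your displayed identity rigorously. It excludes any flux concentrated on $E$, and it identifies the initial-face flux with $F_p(0)$. With it in place, the rest of your argument goes through. Both error terms vanish as $t\downarrow0$. Monotonicity on $[0,+\infty)$ follows either from $\operatorname{div}Y\le0$ together with the nonnegative $\operatorname{II}$-flux, or from continuity combined with Lemma~\ref{lem:Fp-derivative}.
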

	\begin{proof}
		By item~\ref{item:initial-edge} of
		Lemma~\ref{lem:eps-regularization}, there is a collar of the initial
		face on which $w_p\in C^{1,\beta}\cap W^{2,2}$ and
		$|\nabla w_p|$ is bounded away from zero.  Hence the vector field
		$Y$ in~\eqref{eq:Y-def} belongs to $L^2$ in this collar.  Away from
		the edge, the standard approximation giving~\eqref{eq:div-Y} shows
		that this identity holds weakly and that its right-hand side belongs
		to $L^1$.

		Let $E=\partial_{\operatorname{int}}\Omega\cap\partial M$.
		To apply Gauss--Green to $\{0<w_p<t\}$, choose a logarithmic cutoff
		$\chi_\delta$ which vanishes in a $\delta^2$-neighborhood of $E$,
		equals one outside a $\delta$-neighborhood, and satisfies
		\[
			\int|\nabla\chi_\delta|^2\,\dd\mathcal{H}^3
			\le \frac{C}{|\log\delta|}\longrightarrow0.
		\]
		For a test function $\varphi$, apply the weak identity away from $E$
		to $\chi_\delta\varphi$.  The additional term satisfies
		\[
			\left|\int\varphi\langle Y,\nabla\chi_\delta\rangle
			\,\dd\mathcal{H}^3\right|
			\le \|\varphi Y\|_{L^2}\|\nabla\chi_\delta\|_{L^2}
			\longrightarrow0,
		\]
		while the $L^1$ right-hand side converges by absolute continuity.
		Thus~\eqref{eq:div-Y} holds weakly across $E$ as well.  Applying the
		same cutoff in Gauss--Green and letting $\delta\downarrow0$ gives,
		for almost every sufficiently small $t>0$,
		\begin{equation}\label{eq:Fp-endpoint-GG}
			F_p(t)-F_p(0)
			=\int_{\{0<w_p<t\}}\operatorname{div}Y\,\dd\mathcal{H}^3
			-\int_{\partial M\cap\{0<w_p<t\}}
			\langle Y,-\mathbf{n}\rangle\,\dd\mathcal{H}^2.
		\end{equation}
		Here the definition~\eqref{eq:Y-def} identifies the initial-face
		flux in the direction $\nu=\nabla w_p/|\nabla w_p|$ with $F_p(0)$,
		and the logarithmic cutoff excludes any additional flux concentrated
		on $E$.  The physical-boundary flux is nonnegative by the computation
		in the proof of Lemma~\ref{lem:Fp-derivative}; together
		with~\eqref{eq:div-Y}, this gives $F_p(t)\le F_p(0)$.
		Moreover, the volume integral in~\eqref{eq:Fp-endpoint-GG} tends to
		zero by the absolute continuity of the $L^1$ integral.  The boundary
		integral also tends to zero: its integrand is bounded in the collar,
		and the boundary coarea formula, using that
		$|\nabla^{\partial M}w_p|=|\nabla w_p|\ge c$, shows that the area of
		$\partial M\cap\{0<w_p<t\}$ tends to zero.  Thus
		$F_p(t)\to F_p(0)$ through the full-measure set of values for which
		\eqref{eq:Fp-endpoint-GG} holds.  Since $F_p$ is locally
		absolutely continuous and nonincreasing on $(0,+\infty)$, the same
		limit holds without restricting to regular values, proving
		\eqref{eq:Fp-endpoint}.
	\end{proof}
	
	\begin{lem}\label{lem:Fp-Gp-relation}
		Under $\operatorname{Ric}\ge0$ and $\operatorname{II}\ge0$,
		$F_p$ and $G_p$ are nonincreasing.  For a.e.\ $t$,
		\begin{equation}
			0\le G_p(t)\le F_p(t),
			\label{eq:G-F-inequality}
		\end{equation}
		\begin{equation}
			\bigl(e^{-\frac{t}{p-1}}G_p'(t)\bigr)'
			=-\frac{1}{p-1}\,e^{-\frac{t}{p-1}}F_p'(t)\ge0,
			\label{eq:ODE-relation}
		\end{equation}
		For a.e.\ $\delta>0$,
		\begin{equation}
			e^{-\frac{\delta}{p-1}}G_p'(\delta)
			=\frac{1}{p-1}\int_\delta^{\infty}
			e^{-\frac{t}{p-1}}F_p'(t)\,\dd t.
			\label{eq:integral-relation}
		\end{equation}
	\end{lem}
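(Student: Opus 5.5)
We derive everything from the ODE $G_p'=(G_p-F_p)/(p-1)$ established at the end of the proof of Lemma~\ref{lem:Fp-derivative}, together with the monotonicity of $F_p$ and the uniform bound on $G_p$ obtained before that lemma.

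\emph{Relation \eqref{eq:ODE-relation}.} Since $G_p\in W^{2,1}_{\mathrm{loc}}$, we can differentiate $G_p'=(G_p-F_p)/(p-1)$ once more. This gives $G_p''=(G_p'-F_p')/(p-1)$ a.e. Hence
\[
\bigl(e^{-\frac{t}{p-1}}G_p'\bigr)'=e^{-\frac{t}{p-1}}\Bigl(G_p''-\frac{G_p'}{p-1}\Bigr)=-\frac{1}{p-1}e^{-\frac{t}{p-1}}F_p'.
\]
This is nonnegative by \eqref{eq:Fp-derivative}, which requires $\operatorname{Ric}\ge0$ and $\operatorname{II}\ge0$.

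\emph{Integral relation \eqref{eq:integral-relation}.} Integrate \eqref{eq:ODE-relation} over $[\delta,T]$. The only point is that $e^{-T/(p-1)}G_p'(T)\to0$ along a suitable sequence $T\to\infty$. We have $G_p$ bounded and $0\le G_p$. Also $F_p$ is nonincreasing, so $F_p(T)\le F_p(0)$ by Lemma~\ref{lem:Fp-endpoint}. If $F_p$ were unbounded below, $G_p'\ge (G_p-F_p)/(p-1)$ would grow, making $G_p$ unbounded; this is a contradiction. So $F_p$ is bounded as well, and $|G_p'|\le C$. Hence $e^{-T/(p-1)}G_p'(T)\to0$. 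The same bound shows that $\int_\delta^\infty e^{-t/(p-1)}F_p'\,dt$ converges absolutely, since the integrand has one sign and the partial integrals are bounded. This proves \eqref{eq:integral-relation}.

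\emph{The inequality \eqref{eq:G-F-inequality} and monotonicity of $G_p$.} We have $G_p\ge0$ trivially. The right-hand side of \eqref{eq:integral-relation} is $\le0$ because $F_p'\le0$. Hence $G_p'\le0$ a.e., so $G_p$ is nonincreasing. Then $G_p-F_p=(p-1)G_p'\le0$, which gives $G_p\le F_p$. $F_p$ is nonincreasing by Lemma~\ref{lem:Fp-derivative}.

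\emph{Main obstacle.} The main obstacle is the boundary term at infinity, which needs the a priori boundedness of $F_p$ from below. We obtain it from the uniform bound on $G_p$ via the ODE, as above. A more careful version argues by contradiction: if $F_p(T_0)<-K$ for large $K$, then $G_p'\ge(K-G_p)/(p-1)\ge cK$ for all $t\ge T_0$. This forces $G_p\to\infty$, contradicting $G_p\le C\,C_p(\Sigma_0)$. Hence $F_p$ is bounded below by a constant depending only on $\sup G_p$.
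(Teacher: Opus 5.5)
Your argument is correct. It uses the same ingredients as the paper: the first-order relation $G_p'=(G_p-F_p)/(p-1)$, $F_p'\le0$, and $0\le G_p\le C$. You assemble them in a different order.

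\emph{The paper's order.} It first reads \eqref{eq:ODE-relation} as saying that $e^{-t/(p-1)}G_p'$ is nondecreasing. An exponential-growth contradiction with the boundedness of $G_p$ then gives $G_p'\le0$, hence $G_p\le F_p$. A second contradiction, this time with $G_p\ge0$, shows that this nondecreasing nonpositive function tends to $0$, and that yields \eqref{eq:integral-relation}.

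\emph{Your order.} You work with the first-order relation alone. A linear-growth contradiction bounds $F_p$ from below, so $G_p'$ is bounded on $[\delta,\infty)$. The boundary term $e^{-T/(p-1)}G_p'(T)$ then vanishes simply because of the exponential weight. The sign of $G_p'$ and the inequality $G_p\le F_p$ are read off afterwards from \eqref{eq:integral-relation}.

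\emph{What each buys.} Your route never uses the monotonicity of the weighted derivative to control the limit at infinity, and it gives two-sided bounds on $F_p$ and $G_p'$ as a by-product. The paper's route needs no a priori lower bound on $F_p$; nonnegativity of $F_p$ comes out only as a consequence of $G_p\le F_p$.

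\emph{A small sharpening.} Because $G_p\ge0$, your growth argument works for any negative value of $F_p$. If $F_p(T_0)<0$, then $G_p'\ge -F_p(T_0)/(p-1)>0$ on $[T_0,\infty)$, which already gives $F_p\ge0$ with no large constant $K$. Likewise, your lower bound $(K-G_p)/(p-1)$ is valid but can be replaced by the cleaner $(G_p+K)/(p-1)\ge K/(p-1)$.
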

	\begin{proof}
		Comparing the expressions of $F_p$, $G_p$, and $G_p'$ one
		checks that
		\begin{equation}
			G_p'(t)=\frac{1}{p-1}\bigl(G_p(t)-F_p(t)\bigr).
			\label{eq:G-F-relation}
		\end{equation}
		Differentiating and substituting $G_p''=(G_p'-F_p')/(p-1)$,
		\[
		\bigl(e^{-\frac{t}{p-1}}G_p'(t)\bigr)'
		=-\tfrac{1}{p-1}e^{-\frac{t}{p-1}}F_p'(t),
		\]
		proving~\eqref{eq:ODE-relation}; $F_p'\le0$ gives $\ge0$.
		
		Thus $e^{-t/(p-1)}G_p'(t)$ is nondecreasing.  If $G_p'(s)>0$
		for some $s$, then
		\[
		G_p'(t)\ge e^{(t-s)/(p-1)}G_p'(s)>0
		\qquad\text{for }t\ge s,
		\]
		and integration forces $G_p(t)\to+\infty$, contradicting its
		boundedness.  Hence $G_p'\le0$.  From~\eqref{eq:G-F-relation},
		$G_p\le F_p$, yielding~\eqref{eq:G-F-inequality}.
		
		Fix Lebesgue points $0<\delta<R$.  Integrating
		\eqref{eq:ODE-relation} on $[\delta,R]$ gives
		\[
			e^{-\frac{R}{p-1}}G_p'(R)
			-e^{-\frac{\delta}{p-1}}G_p'(\delta)
			=-\frac1{p-1}\int_\delta^R
			e^{-\frac{t}{p-1}}F_p'(t)\,\dd t.
		\]
		The nondecreasing function
		$e^{-t/(p-1)}G_p'(t)\le0$ has limit zero at infinity.  Indeed,
		if its limit were a negative number, then $G_p'(t)$ would be
		bounded above by a negative multiple of $e^{t/(p-1)}$ for all
		large $t$, contradicting $G_p\ge0$.  Letting $R\to\infty$ in the
		last identity proves~\eqref{eq:integral-relation}.
	\end{proof}
	
	\section{Weak Gauss--Bonnet theorem and consequences}
	
	\subsection{Weak Gauss--Bonnet with free boundary}
	
	For a smooth surface $\Sigma$ in $(M,g)$, the Gauss equation
	expresses the induced scalar curvature $\mathrm{R}^{\top}$ as
	\begin{equation}
		\mathrm{R}^{\top}
		= \mathrm{R}-2\operatorname{Ric}(\nu,\nu)
		+H^{2}-|h|^{2},
		\label{eq:Gauss-equation}
	\end{equation}
	where $\nu$, $H$, $h$ are the unit normal, mean
	curvature, and second fundamental form of $\Sigma$.
	
	In our setting the level sets $\Sigma_t$ of $w_p$ lack the
	regularity for classical Gauss--Bonnet: $w_p$ is merely
	$C^{1,\alpha}$ for $p\neq2$, and although $\Sigma_t$ is
	smooth away from the critical set, the second fundamental form
	is not globally defined.  This nonlinear low-regularity difficulty
	is the one encountered in
	Benatti--Le\'{o}n Quir\'{o}s--Oronzio--Pluda~\cite{BPO}.  By contrast,
	the linear harmonic potential used in
	Benatti--Mantegazza--Oronzio--Pluda~\cite{BMPO} is smooth, so regular
	levels there can be handled directly by Sard's theorem.
	
	We overcome it via the weak Gauss--Bonnet theorem of
	Benatti--Pluda--Pozzetta~\cite{BPP}, which approximates $w_p$
	by smooth functions, passes to the limit in the curvature
	varifold sense, and uses $L^{2}$-semicontinuity estimates
	combined with $p$-harmonic energy identities to obtain a
	Gauss--Bonnet formula for the level sets.  In our free-boundary
	setting, the boundary $\Sigma_t\cap\partial M$ introduces an
	additional geodesic curvature term, whose convergence relies
	on the $C^{1}$-convergence of the approximating functions and
	on varifold convergence results for the boundary curves within
	$\partial M$; see~\cite[Appendix~A]{BPP} and
	\cite[Theorem~6.1]{Mantegazza} for the required compactness and
	identification results.
	
	To carry out this programme we first adapt the regularity
	estimate of~\cite[Proposition~3.5]{BPP} to the free boundary
	setting.  The weak Gauss--Bonnet theorem will be proved at the
	end of the section (Theorem~\ref{thm:weak-GB}).
	
	Let $D\supset\Omega$ and $w_p^\varepsilon$ be as in
	Lemma~\ref{lem:eps-regularization}.  Define the
	vector field
	\begin{equation}\label{eq:Yeps-def}
		Y_\varepsilon
		:= \nabla|\nabla w_p^\varepsilon|
		+(p-2)(1-\theta_\varepsilon)
		\nabla^\perp|\nabla w_p^\varepsilon|,
	\end{equation}
	evaluated on $D\setminus\operatorname{Crit}(w_p^\varepsilon)$, where
	$\nabla^\perp|\nabla w_p^\varepsilon|
	=\langle\nabla|\nabla w_p^\varepsilon|,
	\frac{\nabla w_p^\varepsilon}{|\nabla w_p^\varepsilon|}\rangle
	\frac{\nabla w_p^\varepsilon}{|\nabla w_p^\varepsilon|}$.
	By~\cite[Lemma~3.4]{BPP},
	\begin{equation}\label{eq:divY}
		\operatorname{div}(Y_\varepsilon)
		= |\nabla w_p^\varepsilon|\,
		\bigl(\mathcal{D}_+^\varepsilon
		+\mathcal{D}_\sigma^\varepsilon\bigr),
	\end{equation}
	where $\mathcal{D}_\sigma^\varepsilon\in
	L^2_{\mathrm{loc}}(D\setminus\Omega)$ and
	\begin{align}
		\mathcal{D}_+^\varepsilon
		&:= (p-1)^2\Bigl(1+\frac{2-p}{p-1}\theta_\varepsilon\Bigr)
		\nonumber\\
		&\qquad\times
		\Bigl[\frac{1}{p-1}
		-\frac12\Bigl(1+\frac{2-p}{p-1}\theta_\varepsilon\Bigr)
		\Bigr]
		\frac{|\nabla^\perp|\nabla w_p^\varepsilon||^{2}}
		{|\nabla w_p^\varepsilon|^{2}} \nonumber\\
		&\qquad
		+\frac{|\nabla^\top|\nabla w_p^\varepsilon||^{2}}
		{|\nabla w_p^\varepsilon|^{2}}
		+|\mathring{h}^\varepsilon|^{2},\\[4pt]
		\mathcal{D}_\sigma^\varepsilon
		&:= \operatorname{Ric}
		\Bigl(\frac{\nabla w_p^\varepsilon}{|\nabla w_p^\varepsilon|},
		\frac{\nabla w_p^\varepsilon}{|\nabla w_p^\varepsilon|}\Bigr)
		+\frac12\Bigl(1+\frac{2-p}{p-1}\theta_\varepsilon\Bigr)^{2}
		|\nabla w_p^\varepsilon|^{2} \nonumber\\
		&\qquad
		+2\frac{2-p}{(p-1)^{2}}(1-\theta_\varepsilon)\theta_\varepsilon
		|\nabla w_p^\varepsilon|^{2} \nonumber\\
		&\qquad
		+(p-1)\Bigl(1+\frac{2-p}{p-1}\theta_\varepsilon\Bigr)^{2}
		\Bigl\langle\nabla|\nabla w_p^\varepsilon|\,
		\Bigl|
		\frac{\nabla w_p^\varepsilon}{|\nabla w_p^\varepsilon|}
		\Bigr\rangle \nonumber\\
		&\qquad
		+(2-p)(1-\theta_\varepsilon)
		\Bigl(1-\frac{p}{p-1}\theta_\varepsilon\Bigr)
		\Bigl\langle\nabla|\nabla w_p^\varepsilon|\,
		\Bigl|
		\frac{\nabla w_p^\varepsilon}{|\nabla w_p^\varepsilon|}
		\Bigr\rangle.
		\label{eq:calD}
	\end{align}
	For $n=3$ and $p\in(1,2)$, the coefficient of
	$|\nabla^\perp|\nabla w_p^\varepsilon||^{2}/|\nabla w_p^\varepsilon|^{2}$
	in $\mathcal{D}_+^\varepsilon$ simplifies to
	\begin{equation*}
		\Psi_p(\theta)
		=\frac12[p-1+(2-p)\theta][3-p-(2-p)\theta]
		\ge c_p:=\frac{(p-1)(3-p)}2>0,
		\qquad 0\le\theta\le1.
	\end{equation*}
	Indeed, writing $b=p-1+(2-p)\theta\in[p-1,1]$, this is
	$b(2-b)/2$, an increasing function of $b$ on that interval.
	Consequently $\mathcal{D}_+^\varepsilon\ge0$, with a positive
	coefficient independent of $\varepsilon$ in each gradient-square term.
	
	\begin{prop}[Regularity estimate with free boundary]\label{prop:free-boundary-regularity}
		For every $0<\tau<T<\inf_{\partial D}w_p$ and
		$\tau<\inf_{\partial D}w_p-T$, the $\varepsilon$-level sets
		$\Sigma_s^\varepsilon=\{w_p^\varepsilon=s\}$ satisfy
		\begin{equation}\label{eq:interior-regularity-eps}
			\int_\tau^{T}\!\int_{\Sigma_s^\varepsilon}
			|\mathring{h}^\varepsilon|^{2}
			+\frac{|\nabla|\nabla w_p^\varepsilon||^{2}}
			{|\nabla w_p^\varepsilon|^{2}}
			\,\dd\mathcal{H}^{2}\,\dd s
			\;\le\;
			C\Bigl(
			1+\int_{\tau/2}^{T+\tau}\!\int_{\Sigma_s^\varepsilon}
			|\nabla w_p^\varepsilon|^{2}\,\dd\mathcal{H}^{2}\,\dd s
			\Bigr),
		\end{equation}
		for some $C>0$ depending on $\tau,T,D,p,n=3$ and a lower bound on
		$\operatorname{Ric}$.  Consequently,
		\begin{equation}\label{eq:2ff-estimate}
			\int_\tau^{T}\!\int_{\Sigma_s^\varepsilon}
			|h^\varepsilon|^{2}\,\dd\mathcal{H}^{2}\,\dd s
			\;\le\;
			C\Bigl(
			1+\int_{\tau/2}^{T+\tau}\!\int_{\Sigma_s^\varepsilon}
			|\nabla w_p^\varepsilon|^{2}\,\dd\mathcal{H}^{2}\,\dd s
			\Bigr).
		\end{equation}
	\end{prop}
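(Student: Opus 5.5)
We follow the strategy of \cite[Proposition~3.5]{BPP}: integrate the divergence identity \eqref{eq:divY} against a cutoff in $w_p^\varepsilon$. The only new ingredients are the physical-boundary flux of $Y_\varepsilon$ and the lower edge near $\Omega$.

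\emph{Test function and integration.} Fix a Lipschitz $\eta:\mathbb R\to[0,1]$ with $\eta=1$ on $[\tau,T]$, support in $[\tau/2,T+\tau]$, and $|\eta'|\le 4/\tau$. Since $T+\tau<\inf_{\partial D}w_p$, the field $\eta(w_p^\varepsilon)Y_\varepsilon$ vanishes near both Dirichlet faces. Near $\operatorname{Crit}(w_p^\varepsilon)$ we either insert the gradient cutoff $\chi_\delta(|\nabla w_p^\varepsilon|)$ from the proof of Lemma~\ref{lem:Fp-derivative}, or use that for fixed $\varepsilon>0$ the function is smooth and Sard plus coarea make the critical set negligible. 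Integrating \eqref{eq:divY} against $\eta(w_p^\varepsilon)$ gives
\[
\int \eta(w_p^\varepsilon)\,|\nabla w_p^\varepsilon|\,\bigl(\mathcal D_+^\varepsilon+\mathcal D_\sigma^\varepsilon\bigr)
= -\int \eta'(w_p^\varepsilon)\langle Y_\varepsilon,\nabla w_p^\varepsilon\rangle
+\int_{\partial M}\eta(w_p^\varepsilon)\langle Y_\varepsilon,\mathbf n\rangle .
\]
By coarea, the left-hand side is at least $\int_\tau^T\int_{\Sigma_s^\varepsilon}\mathcal D_+^\varepsilon+\mathcal D_\sigma^\varepsilon$. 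Moreover $\mathcal D_+^\varepsilon$ dominates $c_p$ times the integrand of \eqref{eq:interior-regularity-eps}, because $|\nabla|\nabla w||^2=|\nabla^\perp|\nabla w||^2+|\nabla^\top|\nabla w||^2$ and $\Psi_p\ge c_p$.

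\emph{Boundary term.} By the Neumann condition, $\nabla w_p^\varepsilon$ is tangent to $\partial M$. Hence $\nabla^\perp|\nabla w_p^\varepsilon|$ is tangent there and has no $\mathbf n$-component. Differentiating $\langle\nabla w_p^\varepsilon,\mathbf n\rangle=0$ along $\partial M$ gives
\[
\langle\nabla|\nabla w_p^\varepsilon|,\mathbf n\rangle=\nabla^2w_p^\varepsilon(\nu,\mathbf n)|\nabla w_p^\varepsilon|^{-1}\cdot|\nabla w_p^\varepsilon| = -\operatorname{II}(\nabla w_p^\varepsilon,\nabla w_p^\varepsilon)/|\nabla w_p^\varepsilon|\le 0 .
\]
This is the same computation as in Lemma~\ref{lem:Fp-derivative}. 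So the boundary term is nonpositive and can be discarded. This sign is the point that requires convexity, and it is the step to check most carefully, in particular the orientation of $\mathbf n$ and the smoothness of $w_p^\varepsilon$ up to the Neumann face away from corners, which holds by Lemma~\ref{lem:eps-regularization}.

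\emph{Remaining terms.} The bulk term $\eta'\langle Y_\varepsilon,\nabla w\rangle$ is bounded by $C\tau^{-1}|\nabla|\nabla w||\,|\nabla w|$ on $\{\tau/2<w<T+\tau\}$. By Young's inequality this is at most $\tfrac{c_p}{2}\,|\nabla|\nabla w||^2/|\nabla w|^2\cdot|\nabla w|$ plus $C|\nabla w|^3$, which after coarea equals $C\int\!\int_{\Sigma_s}|\nabla w|^2$.

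\emph{Main obstacle.} The absorption is the delicate part. The good term carries the weight $\eta$, while $\eta'$ lives on $[\tau/2,\tau]\cup[T,T+\tau]$. As in BPP, we handle this by taking $\eta^2$ as the test function, so that $|\eta'|\le C\eta^{1/2}$-type bounds permit absorption.

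\emph{The $\mathcal D_\sigma^\varepsilon$ terms.} These are handled similarly. The Ricci term is bounded below by $-C|\nabla w|^2\ldots$ and contributes at most $C\cdot\mathcal H^3(D)$, hence the constant $1$. The $|\nabla w|^2$ terms are nonnegative. The term $\langle\nabla|\nabla w|\,|\nu\rangle$ is absorbed by Young's inequality into $\tfrac{c_p}{4}|\nabla^\perp|\nabla w||^2/|\nabla w|^2$ plus $C|\nabla w|^2$. Here the exact coefficient bookkeeping of \cite[Lemma~3.4]{BPP} matters.

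\emph{Conclusion.} This yields \eqref{eq:interior-regularity-eps}. For \eqref{eq:2ff-estimate}, write $|h^\varepsilon|^2=|\mathring h^\varepsilon|^2+\tfrac12(H^\varepsilon)^2$. By \eqref{eq:H-epsilon} and $\theta_\varepsilon\in[0,1]$,
\[
(H^\varepsilon)^2\le C\bigl(|\nabla w|^2+|\nabla|\nabla w||^2/|\nabla w|^2\bigr),
\]
and both pieces are already controlled.
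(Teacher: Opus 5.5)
Your outline matches the paper's proof: cutoff in $w_p^\varepsilon$, physical-boundary flux of favourable sign from convexity and the Neumann condition, test function $\varphi^2(w_p^\varepsilon)$ with Young's inequality and coarea, then \eqref{eq:H-epsilon} for $|h^\varepsilon|^2$. However, the step across $\operatorname{Crit}(w_p^\varepsilon)$ is not justified by either route you offer.

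\emph{Sard plus coarea.} These only give that almost every level is regular and that the critical set carries no $|\nabla w_p^\varepsilon|\,\dd\mathcal{H}^3$-mass. They say nothing about flux of $Y_\varepsilon$ concentrating on the critical set, i.e.\ about a possible singular part of the distributional divergence of $Y_\varepsilon$ there. Note that $Y_\varepsilon$ is undefined, and in general discontinuous, on that set. So the identity you display is not available.

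\emph{The cutoff of Lemma~\ref{lem:Fp-derivative}.} Borrowing it does not help. The vanishing $E_\delta\to0$ there was established for the limit potential $w_p$ via the delicate argument of \cite{BPP}, not for $w_p^\varepsilon$.

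\emph{The missing observation.} No vanishing is needed. With $Y_\varepsilon^\delta=\chi_\delta(|\nabla w_p^\varepsilon|)Y_\varepsilon$, the cutoff error is $\int\varphi(w_p^\varepsilon)\chi_\delta'(|\nabla w_p^\varepsilon|)\langle Y_\varepsilon,\nabla|\nabla w_p^\varepsilon|\rangle\,\dd\mathcal{H}^3$. Here $\chi_\delta'\ge0$ and
\[
\langle Y_\varepsilon,\nabla|\nabla w_p^\varepsilon|\rangle
=|\nabla^\top|\nabla w_p^\varepsilon||^2
+\bigl(p-1+(2-p)\theta_\varepsilon\bigr)|\nabla^\perp|\nabla w_p^\varepsilon||^2\ge0,
\]
so the error has the favourable sign and can simply be dropped. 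Then let $\delta\to0$. Use monotone convergence for $\mathcal{D}_+^\varepsilon$ and the boundary term, and dominated convergence for $\mathcal{D}_\sigma^\varepsilon\in L^2_{\mathrm{loc}}$ and for the left-hand side. This yields an inequality in place of your identity, in exactly the direction you use.

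Two smaller points.

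\emph{Orientation of $\mathbf n$.} Your sign for $\langle\nabla|\nabla w_p^\varepsilon|,\mathbf n\rangle$ and the $+$ sign on the boundary integral are mutually consistent only if $\mathbf n$ is the outward normal. With the paper's inward $\mathbf n$, one has $\langle\nabla|\nabla w_p^\varepsilon|,\mathbf n\rangle=+\operatorname{II}(\nabla w_p^\varepsilon,\nabla w_p^\varepsilon)/|\nabla w_p^\varepsilon|$, and the boundary term enters the divergence theorem with a minus sign (outward normal $-\mathbf n$). The conclusion is the same.

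\emph{Sign of $\mathcal{D}_\sigma^\varepsilon$.} Since $\mathcal{D}_\sigma^\varepsilon$ is not sign-definite, the coarea lower bound by $\int_\tau^T$ applies only to $\mathcal{D}_+^\varepsilon$. The Ricci contribution is $-\kappa\int\varphi^2(w_p^\varepsilon)|\nabla w_p^\varepsilon|\,\dd\mathcal{H}^3$. Via $|\nabla w_p^\varepsilon|\le1+|\nabla w_p^\varepsilon|^3$, this is what produces both the constant $1$ and part of the gradient term with an $\varepsilon$-independent constant.
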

	
	\begin{proof}
		Let $\varphi\in\operatorname{Lip}_c(0,T)$ be nonnegative.
		Choose $\chi_\delta(r)=\chi(r/\delta)$, with $\chi$ smooth and
		nondecreasing, so that $\chi_\delta\in C^\infty([0,+\infty))$ satisfies
		$\chi_\delta(t)=0$ for $t\le\delta$,
		$0\le\chi_\delta'(t)\le2/\delta$ for $t\in[\delta,2\delta]$,
		$\chi_\delta(t)=1$ for $t\ge2\delta$, and set
		$Y_\varepsilon^\delta
		:=\chi_\delta(|\nabla w_p^\varepsilon|)\,Y_\varepsilon$;
		then $Y_\varepsilon^\delta$ is smooth and vanishes identically on
		$\operatorname{Crit}(w_p^\varepsilon)$.
		
		\begin{step}[Divergence theorem with free boundary]
			Apply the divergence theorem to
			$\varphi(w_p^\varepsilon)\,Y_\varepsilon^\delta$ on
			$D\setminus\Omega$.  The boundary consists of three parts:
			$\partial D$ and $\partial_{\operatorname{int}}\Omega$ (where
			the terms vanish because $\varphi$ has compact support in
			$(0,T)$), and $\partial M\cap(D\setminus\Omega)$.
			
			On $\partial M$, the outward normal to $D\setminus\Omega$ is
			$\nu_{\mathrm{out}}=-\mathbf{n}$, where $\mathbf{n}$ is the
			inward unit normal to $M$.  Since $\nabla^\perp|\nabla w_p^\varepsilon|$
			is parallel to $\nabla w_p^\varepsilon$ and
			$\langle\nabla w_p^\varepsilon,\mathbf{n}\rangle=0$, we have
			\begin{equation*}
				\langle Y_\varepsilon,\mathbf{n}\rangle
				= \langle\nabla|\nabla w_p^\varepsilon|,\mathbf{n}\rangle
				= \frac{\operatorname{II}(\nabla w_p^\varepsilon,
					\nabla w_p^\varepsilon)}
				{|\nabla w_p^\varepsilon|},
			\end{equation*}
			where the second equality follows from the symmetry of the
			Hessian and the Neumann condition, as in~\eqref{eq:kg-II}.
			
			Expanding the divergence theorem,
			\begin{align}
				-\int_{D\setminus\Omega}
				\bigl\langle Y_\varepsilon^\delta,\,
				\nabla[\varphi(w_p^\varepsilon)]
				\bigr\rangle\,\dd\mathcal{H}^{3}
				&=\;
			\int_{D\setminus\Omega}
			\chi_\delta'(|\nabla w_p^\varepsilon|)\,
			\bigl\langle Y_\varepsilon,
			\nabla|\nabla w_p^\varepsilon|\bigr\rangle
			\varphi(w_p^\varepsilon)\,\dd\mathcal{H}^{3} \nonumber\\
			&\quad
			+\int_{D\setminus\Omega}
			\chi_\delta(|\nabla w_p^\varepsilon|)\,
			\operatorname{div}(Y_\varepsilon)\,
			\varphi(w_p^\varepsilon)\,\dd\mathcal{H}^{3} \nonumber\\
				&\quad
				+ \int_{\partial M\cap(D\setminus\Omega)}
				\varphi(w_p^\varepsilon)\,
				\chi_\delta(|\nabla w_p^\varepsilon|)\,
				\frac{\operatorname{II}(\nabla w_p^\varepsilon,
					\nabla w_p^\varepsilon)}
				{|\nabla w_p^\varepsilon|}
				\,\dd\mathcal{H}^{2}.
				\label{eq:div-thm-expanded}
			\end{align}
		\end{step}
		
		\begin{step}[Limit $\delta\to0$]
			The first and third terms on the right-hand side
			of~\eqref{eq:div-thm-expanded} are nonnegative: indeed,
			$\chi_\delta'\ge0$, $\varphi\ge0$, and
			\begin{equation*}
				\bigl\langle Y_\varepsilon,\,
				\nabla|\nabla w_p^\varepsilon|
				\bigr\rangle
				= |\nabla^{\top}|\nabla w_p^\varepsilon||^{2}
				+ (p-1)\Bigl(1+\frac{2-p}{p-1}\,\theta_\varepsilon\Bigr)
				|\nabla^{\perp}|\nabla w_p^\varepsilon||^{2}
				\;\ge\;0
			\end{equation*}
			by~\eqref{eq:Yeps-def}, while the boundary integral over
			$\partial M$ is nonnegative because
			$\operatorname{II}\ge0$ and
			$|\nabla w_p^\varepsilon|>0$ on the support of
			$\chi_\delta$.  The second term contains
			$\operatorname{div}(Y_\varepsilon)
			=|\nabla w_p^\varepsilon|(\mathcal{D}_+^\varepsilon
			+\mathcal{D}_\sigma^\varepsilon)$
			by~\eqref{eq:divY}, where
			$\mathcal{D}_+^\varepsilon\ge0$ by~\eqref{eq:calD} but
			$\mathcal{D}_\sigma^\varepsilon$ is not sign-definite.
			
			We now let $\delta\to0$.  On the left-hand side of
			\eqref{eq:div-thm-expanded},
			$Y_\varepsilon^\delta\to Y_\varepsilon$ pointwise.  Moreover, the
			integrand
			\[
			\bigl\langle Y_\varepsilon^\delta,
			\nabla[\varphi(w_p^\varepsilon)]\bigr\rangle
			\]
			is dominated by an $L^{1}$-function, so dominated convergence gives
			\[
			\int_{D\setminus\Omega}\langle Y_\varepsilon^\delta,
			\nabla[\varphi(w_p^\varepsilon)]\rangle\,\dd\mathcal{H}^{3}
			\xrightarrow{\;\delta\to0\;}
			\int_{D\setminus\Omega}\langle Y_\varepsilon,
			\nabla[\varphi(w_p^\varepsilon)]\rangle\,\dd\mathcal{H}^{3}.
			\]
			On the right-hand side, the first term in the volume integral
			is nonnegative and is dropped from the inequality.  For the
			second term, dominated convergence applies to
			$\mathcal{D}_\sigma^\varepsilon\in L^{2}_{\mathrm{loc}}$, while
			monotone convergence applies to $\mathcal{D}_+^\varepsilon\ge0$
			and to the $\partial M$ boundary term.  Thus
			\begin{equation}\label{eq:key-inequality}
				\begin{aligned}
				&-\int_{D\setminus\Omega}
				\bigl\langle Y_\varepsilon,\,
				\nabla[\varphi(w_p^\varepsilon)]
				\bigr\rangle\,\dd\mathcal{H}^{3}\\
				&\quad\ge
				\int_{D\setminus\Omega}
				\varphi(w_p^\varepsilon)\,
				\bigl(\mathcal{D}_+^\varepsilon
				+\mathcal{D}_\sigma^\varepsilon\bigr)\,
				|\nabla w_p^\varepsilon|\,\dd\mathcal{H}^{3}\\
				&\qquad+\int_{\partial M\cap(D\setminus\Omega)}
				\varphi(w_p^\varepsilon)
				\frac{\operatorname{II}(\nabla w_p^\varepsilon,
				\nabla w_p^\varepsilon)}{|\nabla w_p^\varepsilon|}
				\,\dd\mathcal{H}^2.
				\end{aligned}
			\end{equation}
			Here the boundary density is defined to be zero at critical
			points.  We retain this nonnegative term for the convergence
			argument below.  No vanishing of the gradient-cutoff error
			for fixed $\varepsilon$ is asserted or needed.
		\end{step}
		
		Discarding the nonnegative boundary term in
        \eqref{eq:key-inequality} gives the analogue of the inequality
        established in Step~1 of the proof of
        \cite[Proposition~3.5]{BPP}, equivalently its equation~(3.10)
        after applying the coarea formula.
		
		\begin{step}[Young's inequality and the $L^{2}$ estimate]
			Take $\varphi\in\operatorname{Lip}_c(0,T+\tau)$ with
			$\varphi(s)=1$ for $s\in[\tau,T]$,
			$\varphi(s)=0$ outside $[\tau/2,T+\tau]$, and
			$|\varphi'(s)|\le8/\tau$.  Insert $\varphi^{2}$ into
			\eqref{eq:key-inequality}, applied with $T+\tau$ in place of
			$T$, and discard its nonnegative boundary term.
			By Young's inequality, for every
			$\eta>0$,
			\begin{equation*}
				\bigl\langle Y_\varepsilon,\,
				\nabla[\varphi^{2}(w_p^\varepsilon)]
				\bigr\rangle
				\ge -\eta^{2}[\varphi(w_p^\varepsilon)]^{2}\,
				\frac{|\nabla|\nabla w_p^\varepsilon||^{2}}
				{|\nabla w_p^\varepsilon|}
				-\frac{C}{\eta^{2}}\,[\varphi'(w_p^\varepsilon)]^{2}\,
				|\nabla w_p^\varepsilon|^{3},
			\end{equation*}
			and, using the explicit expressions~\eqref{eq:calD},
			\begin{equation*}
			\begin{aligned}
			&[\varphi(w_p^\varepsilon)]^{2}
			(\mathcal{D}_+^\varepsilon+\mathcal{D}_\sigma^\varepsilon)
			|\nabla w_p^\varepsilon|\\
			&\quad\ge [\varphi(w_p^\varepsilon)]^{2}
			\Biggl[
			(c_{0}-\eta^{2})\frac{|\nabla|\nabla w_p^\varepsilon||^{2}}
			{|\nabla w_p^\varepsilon|}
			+|\mathring{h}^\varepsilon|^{2}|\nabla w_p^\varepsilon|\\
			&\hspace{42mm}
			-\frac{C}{\eta^{2}}|\nabla w_p^\varepsilon|^{3}
			-\kappa|\nabla w_p^\varepsilon|
			\Biggr].
			\end{aligned}
			\end{equation*}
			where $c_{0}>0$ depends only on $p$ and $n=3$, and $\kappa$ is
			a nonnegative constant such that $\operatorname{Ric}\ge-\kappa g$.
			
			Choosing $\eta\ll c_{0}/4$, absorbing the
			$|\nabla|\nabla w_p^\varepsilon||^{2}$ term from the left-hand
			side, and applying the coarea formula
			\[
			\int_{D\setminus\Omega}
			f(w_p^\varepsilon)\,|\nabla w_p^\varepsilon|\,\dd\mathcal{H}^{3}
			=\int_{0}^{+\infty}
			f(s)\int_{\Sigma_s^\varepsilon}\dd\mathcal{H}^{2}\,\dd s,
			\]
			together with the $L^{2}_{\mathrm{loc}}$ summability of the terms
			involved, yields~\eqref{eq:interior-regularity-eps}.
			
			The estimate~\eqref{eq:2ff-estimate} follows from
			\eqref{eq:interior-regularity-eps} and the pointwise bound
			$|h^\varepsilon|^{2}
			\le C(|\mathring{h}^\varepsilon|^{2}
			+|\nabla w_p^\varepsilon|^{2}
			+|\nabla|\nabla w_p^\varepsilon||^{2}
			/|\nabla w_p^\varepsilon|^{2})$,
			which is a consequence of~\eqref{eq:H-epsilon}.
		\end{step}
	\end{proof}
	
	The argument of~\cite[Corollary~3.6]{BPP}, adapted to the free
	boundary setting, upgrades Proposition~\ref{prop:free-boundary-regularity}
	to a global estimate including the initial surface
	$\partial_{\operatorname{int}}\Omega$.  The key additional ingredient
	is Hopf's lemma, which guarantees that
	$\operatorname{Crit}(w_p^\varepsilon)$ stays away from
	$\partial_{\operatorname{int}}\Omega$, so that the divergence theorem
	can be applied without a cutoff on the region between
	$\partial_{\operatorname{int}}\Omega$ and the first level sets.
	The extra boundary term on $\partial M$ produced by this procedure is
	again nonnegative, leaving the estimate unchanged.
	
	\begin{cor}[Global regularity estimate with free boundary]
		
		For every $0<T<\inf_{\partial D}w_p$ and
		$0<\tau<\inf_{\partial D}w_p-T$,
		\begin{equation}\label{eq:global-regularity-eps}
			\int_0^{T}\!\int_{\Sigma_s^\varepsilon}
			|h^\varepsilon|^{2}\,\dd\mathcal{H}^{2}\,\dd s
			\;\le\;
			C\Bigl(
			1+\int_{0}^{T+\tau}\!\int_{\Sigma_s^\varepsilon}
			|\nabla w_p^\varepsilon|^{2}\,\dd\mathcal{H}^{2}\,\dd s
			+\int_{\partial_{\operatorname{int}}\Omega}
			\bigl(H^{2}+|\nabla w_p^\varepsilon|^{2}\bigr)
			\,\dd\mathcal{H}^{2}
			\Bigr),
		\end{equation}
		for some $C>0$ depending on $T,\tau,D,p,n=3$ and a lower bound on
		$\operatorname{Ric}$.
	\end{cor}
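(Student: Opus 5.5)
The estimate on $[\tau,T]$ is already given by Proposition~\ref{prop:free-boundary-regularity}. It therefore suffices to control the initial strip $s\in[0,\tau]$, and for this I will follow \cite[Corollary~3.6]{BPP}: on the collar near $\partial_{\operatorname{int}}\Omega$ I apply the divergence theorem without the gradient cutoff $\chi_\delta$, and with a test function that does not vanish at $s=0$.

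\textbf{Collar structure.} Item~\ref{item:initial-edge} of Lemma~\ref{lem:eps-regularization} provides a neighbourhood $U$ of $\partial_{\operatorname{int}}\Omega$ on which $|\nabla w_p^\varepsilon|\ge c/2$ uniformly in $\varepsilon$ (Hopf lemma plus the reflection across the Neumann face), together with uniform $W^{2,2}(U\setminus\Omega)$ bounds. Shrinking $\tau$ if necessary (the constant may depend on $\tau$), we have $\{0\le w_p^\varepsilon\le 2\tau\}\subset U$ for all small $\varepsilon$, by the uniform $C^1$ convergence in item~\ref{item:C1}. In this region $Y_\varepsilon$ is well defined without any cutoff. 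Moreover $\theta_\varepsilon$ and the coefficients in \eqref{eq:calD} are bounded, so the sign-indefinite term $\mathcal D_\sigma^\varepsilon$ is controlled by $\kappa+C|\nabla w_p^\varepsilon|^2+C|\nabla|\nabla w_p^\varepsilon||\,|\nabla w_p^\varepsilon|$.

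\textbf{Main step.} Take $\varphi$ Lipschitz with $\varphi=1$ on $[0,\tau]$, $\varphi=0$ on $[2\tau,\infty)$ and $|\varphi'|\le 2/\tau$, and apply the divergence theorem to $\varphi^2(w_p^\varepsilon)Y_\varepsilon$ on $\{0<w_p^\varepsilon<2\tau\}$. The boundary of this region has three kinds of pieces.
\begin{itemize}
\item The physical face $\partial M$ contributes $\varphi^2\operatorname{II}(\nabla w_p^\varepsilon,\nabla w_p^\varepsilon)/|\nabla w_p^\varepsilon|\ge0$, exactly as in \eqref{eq:div-thm-expanded}. It falls on the favourable side and is discarded.
\item The edge $E$ has zero $\mathcal H^2$-measure and carries no flux. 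The uniform $W^{2,2}$ bound lets me justify this by the logarithmic cutoff used in Lemma~\ref{lem:Fp-endpoint}.
\item The initial face contributes $\int_{\partial_{\operatorname{int}}\Omega}\langle Y_\varepsilon,\nu\rangle$.
\end{itemize}
Using \eqref{eq:H-epsilon} on $\partial_{\operatorname{int}}\Omega$ (a level set of $w_p^\varepsilon$ with nonvanishing gradient), $\partial_\nu|\nabla w_p^\varepsilon|$ can be expressed through $H$ and $|\nabla w_p^\varepsilon|$ with bounded coefficients. Hence
\[
\Bigl|\int_{\partial_{\operatorname{int}}\Omega}\langle Y_\varepsilon,\nu\rangle\Bigr|\le C\int_{\partial_{\operatorname{int}}\Omega}\bigl(|H|\,|\nabla w_p^\varepsilon|+|\nabla w_p^\varepsilon|^2\bigr)\le C\int_{\partial_{\operatorname{int}}\Omega}\bigl(H^2+|\nabla w_p^\varepsilon|^2\bigr).
\]
This is where the extra term in \eqref{eq:global-regularity-eps} comes from.

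\textbf{Absorption.} The remaining manipulations are the same as in Step~3 of the proof of Proposition~\ref{prop:free-boundary-regularity}. Young's inequality absorbs the $\varphi\varphi'$ cross term and the $\mathcal D_\sigma^\varepsilon$ gradient term into $c_0|\nabla|\nabla w_p^\varepsilon||^2/|\nabla w_p^\varepsilon|$, using the coercivity $\Psi_p\ge c_p$. The coarea formula then converts the result into
\[
\int_0^\tau\!\int_{\Sigma_s^\varepsilon}\Bigl(|\mathring h^\varepsilon|^2+\frac{|\nabla|\nabla w_p^\varepsilon||^2}{|\nabla w_p^\varepsilon|^2}\Bigr)\le C\Bigl(1+\int_0^{2\tau}\!\int_{\Sigma_s^\varepsilon}|\nabla w_p^\varepsilon|^2+\int_{\partial_{\operatorname{int}}\Omega}\bigl(H^2+|\nabla w_p^\varepsilon|^2\bigr)\Bigr),
\]
where the $\kappa$ term is bounded using the finite area $\int_0^{2\tau}|\Sigma_s^\varepsilon|\,\dd s\le C|\{w_p^\varepsilon<2\tau\}|\sup|\nabla w_p^\varepsilon|$. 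The pointwise bound on $|h^\varepsilon|^2$ derived from \eqref{eq:H-epsilon} converts this into a bound on $\int_0^\tau\!\int|h^\varepsilon|^2$. Adding \eqref{eq:2ff-estimate} on $[\tau,T]$, with $\tau/2$ replaced by a smaller positive number, completes the estimate.

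\textbf{Main obstacle.} I expect the hardest step to be the edge $E$, where the Dirichlet and Neumann faces meet. I need the divergence theorem to hold there with no concentrated flux, and I need the identification of the boundary term on $\partial_{\operatorname{int}}\Omega$ to be valid up to $E$. My first approach is the even reflection across $\partial M$ from item~\ref{item:initial-edge}, which turns the neighbourhood of $E$ into an interior Dirichlet collar. If reflection causes trouble, I fall back on the logarithmic cutoff argument with the uniform $W^{2,2}$ bounds, which makes $Y_\varepsilon\in L^2$ near $E$.
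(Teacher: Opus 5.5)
Your proposal is correct and follows essentially the paper's route: the paper also adapts \cite[Corollary~3.6]{BPP}, using the Hopf lemma (item~\ref{item:initial-edge} of Lemma~\ref{lem:eps-regularization}) to apply the divergence theorem without a gradient cutoff between $\partial_{\operatorname{int}}\Omega$ and the first level sets, so that the initial face produces the $\int_{\partial_{\operatorname{int}}\Omega}(H^2+|\nabla w_p^\varepsilon|^2)$ term and the physical-boundary flux $\operatorname{II}(\nabla w_p^\varepsilon,\nabla w_p^\varepsilon)/|\nabla w_p^\varepsilon|\ge0$ lands on the favourable side; your logarithmic-cutoff treatment of the edge (as in Lemma~\ref{lem:Fp-endpoint}) supplies a detail the paper leaves implicit. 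Two small bookkeeping points: introduce the collar width as a separate $\tau_0\le\tau$ instead of shrinking the given $\tau$, and note that the containment $\{w_p^\varepsilon\le2\tau_0\}\subset U$, uniformly in $\varepsilon$, also requires the maximum-principle separation from the artificial outer face used in the proof of Theorem~\ref{thm:varifold-convergence-free-boundary}, because item~\ref{item:C1} gives convergence only away from that face.
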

	
	The following theorem extends \cite[Theorem~3.7]{BPP} to the free
	boundary setting.  Mantegazza's compactness theorem for curvature
	varifolds with boundary \cite{Mantegazza} is used for the interior
	level surfaces.  The boundary curves are treated independently as
	$1$-varifolds in the two-dimensional manifold $\partial M$, using
	the perimeter convergence supplied by \cite[Corollary~A.11]{BPP}
	and the identification criterion in
	\cite[Corollary~A.12(1)]{BPP}.
	
	\begin{thm}[Varifold convergence with free boundary]
		\label{thm:varifold-convergence-free-boundary}
		Let $p\in(1,2)$, let $\Omega\subset M$ be a closed bounded set
		with connected exterior and smooth boundary meeting $\partial M$
		orthogonally, and let
		$w_p$, $w_p^\varepsilon$ be as in
		Lemma~\ref{lem:eps-regularization}.
		Fix $0<T<\inf_{\partial D}w_p$ and
		$0<\tau<\inf_{\partial D}w_p-T$, and let
		$(\varepsilon_k)_{k\in\mathbb{N}}$ be a vanishing sequence.
		Then, after passing to a subsequence,
		\begin{enumerate}
			\item[(i)] $\Sigma_t^{\varepsilon_k}\to\Sigma_t$ in the sense
			of varifolds, for a.e.\ $t\in[0,T]$, and $\Sigma_t$ is a
			curvature varifold;
			\item[(ii)] $\partial\Sigma_t^{\varepsilon_k}
			\to\partial\Sigma_t$ in the sense of varifolds (as
			$1$-varifolds in $\partial M$), for a.e.\ $t\in[0,T]$;
			\item[(iii)] the $L^{2}$-estimate
			\begin{equation}\label{eq:interior-esitmate-2ff-p}
				\int_0^{T}\!\int_{\Sigma_t}
				|h|^{2}\,\dd\mathcal{H}^{2}\,\dd t
				\le C\Bigl(
				1+\int_0^{T+\tau}\!\int_{\Sigma_t}
				|\nabla w_p|^{2}\,\dd\mathcal{H}^{2}\,\dd t
				+\int_{\partial_{\operatorname{int}}\Omega}
				\bigl(H^{2}+|\nabla w_p|^{2}\bigr)
				\,\dd\mathcal{H}^{2}
				\Bigr)
			\end{equation}
			holds, where $C>0$ depends on $T,\tau,D,p,n=3$ and a lower
			bound on $\operatorname{Ric}$.
		\end{enumerate}
	\end{thm}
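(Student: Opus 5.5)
The plan is to follow the proof of \cite[Theorem~3.7]{BPP}. We use the uniform estimates of Lemma~\ref{lem:eps-regularization} and the global free-boundary estimate \eqref{eq:global-regularity-eps} to extract limits. We then identify them, treating the interior surfaces and the boundary curves separately.

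\emph{Uniform bounds and interior limits.} By item~\ref{item:C1} of Lemma~\ref{lem:eps-regularization}, $w_p^{\varepsilon_k}\to w_p$ in $C^1$ on compact subsets of $\overline{D\setminus\Omega}\setminus\partial_{\operatorname{int}}D$, and uniformly up to the initial face. Coarea applied to $|\nabla w_p^{\varepsilon_k}|^2$ bounds the right-hand side of \eqref{eq:global-regularity-eps} uniformly in $k$. The initial-face term is fixed, and $|\nabla w_p^{\varepsilon}|$ is uniformly bounded on $\{w_p\le T+\tau\}$. Hence $\int_0^T a_k(t)\,\dd t\le C$, where
\[
a_k(t):=\int_{\Sigma_t^{\varepsilon_k}}\bigl(1+|h^{\varepsilon_k}|^2\bigr)\dd\mathcal H^2 .
\]
Fatou's lemma gives $\liminf_k a_k(t)<\infty$ for a.e.\ $t$. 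For such $t$ the free-boundary Gauss--Bonnet identity bounds the generalized boundary: the regular levels meet $\partial M$ orthogonally, so their conormal is $\pm$ the boundary normal, and the geodesic curvature term is controlled through \eqref{eq:kg-II}. We then take a $t$-dependent subsequence and apply Mantegazza's compactness theorem \cite[Theorem~6.1]{Mantegazza} for curvature varifolds with boundary. It produces a curvature varifold limit $V_t$ satisfying $\int|h|^2\dd\|V_t\|\le\liminf_k\int|h^{\varepsilon_k}|^2$.

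\emph{Identification and uniformity of the subsequence.} To identify $V_t$ with the varifold of $\Sigma_t$, we apply the argument of \cite[Corollaries~A.11--A.12]{BPP}. The $C^1$ convergence gives $L^1$ convergence of the sublevel sets $\{w_p^{\varepsilon_k}<t\}$. The coarea formula and the weak $W^{1,2}$ convergence of item~\ref{item:W12} give convergence of perimeters for a.e.\ $t$. Perimeter convergence together with $L^1$ convergence forces varifold convergence of the reduced boundaries. Since $\mathcal H^2(\Sigma_t\cap\operatorname{Crit}(w_p))=0$, the reduced boundary is $\Sigma_t$ with unit multiplicity. The limit is therefore independent of the subsequence, so a diagonal argument over a countable dense set of $t$, combined with the monotone structure in $t$, yields a single subsequence that works for a.e.\ $t$. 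This proves (i).

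For (ii), we work intrinsically in the surface $\partial M$. The sets $\{w_p^{\varepsilon_k}<t\}\cap\partial M$ converge in $L^1(\partial M)$ by $C^1$ convergence. Their perimeters are $\mathcal H^1(\partial\Sigma_t^{\varepsilon_k})$, and these converge for a.e.\ $t$ by the boundary coarea formula, which applies because the Neumann condition gives $|\nabla^{\partial M}w_p^\varepsilon|=|\nabla w_p^\varepsilon|$, together with trace control of $|\nabla w_p^\varepsilon|$. The same identification criterion \cite[Corollary~A.12(1)]{BPP}, applied to $1$-varifolds on $\partial M$, then gives (ii).

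Statement (iii) follows by integrating the lower semicontinuity in $t$ with Fatou's lemma and passing to the limit in the right-hand side of \eqref{eq:global-regularity-eps}. The gradient terms converge by $C^1$ convergence and dominated convergence.

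\emph{Main obstacle.} I expect the hardest step to be perimeter convergence for the boundary curves in (ii), together with the control of the boundary measure needed for Mantegazza's theorem. Weak $W^{1,2}$ convergence in the interior does not immediately control traces on $\partial M$. My first attempt would be the trace theorem applied to $|\nabla w_p^\varepsilon|\in W^{1,2}_{\mathrm{loc}}$, which gives uniform $L^2(\partial M)$ bounds and hence weak convergence of traces. That is enough for a.e.-$t$ convergence of the boundary coarea integrals.
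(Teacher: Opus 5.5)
Your overall scheme is the paper's. You use Fatou plus a $t$-dependent subsequence and \cite[Theorem~6.1]{Mantegazza} to certify a curvature varifold, and you identify the limit through perimeter plus $L^1$ convergence as in \cite[Corollaries~A.11--A.12]{BPP}. You treat (ii) by an independent intrinsic argument on $\partial M$, and get (iii) by Fatou in \eqref{eq:global-regularity-eps}. For the interior identification the paper instead uses smooth convergence on $\{|\nabla w_p|>0\}$ plus convergence of total mass; your Reshetnyak-type route is equally fine.

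\textbf{Missing step: separation from the outer face.} Lemma~\ref{lem:eps-regularization}(i) controls $w_p^\varepsilon$ only on compact sets avoiding the artificial outer face $\partial_{\operatorname{int}}D$, where no regularity is asserted. You bound $|\nabla w_p^\varepsilon|$ on $\{w_p\le T+\tau\}$. However, the coarea integrals in \eqref{eq:global-regularity-eps}, the compactness of the level surfaces, and the perimeter convergence all concern $\{w_p^{\varepsilon_k}\le T+\tau\}$. Locally uniform convergence does not stop $w_p^{\varepsilon_k}$ from dropping below $T+\tau$ in shrinking neighbourhoods of $\partial_{\operatorname{int}}D$, where no gradient bound is available. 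You need that, for some $c\in(T+\tau,\inf_{\partial D}w_p)$, every $\{w_p^{\varepsilon_k}\le c\}$ lies in one compact $K_c$ away from the outer face. The paper obtains this as follows.
\begin{itemize}
\item It builds a one-sided collar $\mathcal A$ of $\Gamma=\partial_{\operatorname{int}}D$ from a vector field tangent to $\partial M$ (possible by transversality), on which $w_p>c+2\eta$.
\item $C^1$ convergence on the inner face of $\mathcal A$ gives $w_p^{\varepsilon_k}>c+\eta$ there.
\item On $\Gamma$ one has $w_p^{\varepsilon_k}=w_p\ge\inf_{\partial D}w_p$.
\item The mixed Dirichlet--Neumann maximum principle for $u_p^{\varepsilon_k}$ against a constant then gives $w_p^{\varepsilon_k}\ge c+\eta$ on all of $\mathcal A$.
\end{itemize}

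\textbf{Wrong source for the boundary mass.} The boundary mass required by Mantegazza's theorem is the length of $\partial\Sigma_t^{\varepsilon_k}$, and Gauss--Bonnet does not control it. It involves the uncontrolled Euler characteristic, and \eqref{eq:kg-II} bounds $\int k_g$ by length rather than conversely. The correct input is the length convergence $\mathcal H^1(\partial\Sigma_t^{\varepsilon_k})\to\mathcal H^1(\partial\Sigma_t)$ from \cite[Corollary~A.11]{BPP} on $\partial M$. This must be established before Mantegazza's theorem is applied.

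\textbf{Your ``main obstacle'' is not one.} Once $K_c$ is available, Lemma~\ref{lem:eps-regularization}(i) already gives $C^1$ convergence up to $\partial M$ on $K_c$, via Lieberman's conormal estimate. So the restrictions converge in $W^{1,1}$ on $\partial M$, which is exactly what Corollary~A.11 needs, and no trace theorem is required. Likewise, in the interior the relevant input is strong $W^{1,1}$ convergence, not the weak $W^{1,2}$ convergence of $|\nabla w_p^\varepsilon|$.

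\textbf{The diagonal argument.} Your diagonal argument over a dense set of levels ``combined with the monotone structure in $t$'' is not valid: convergence at a dense set of levels says nothing about the others. It is also unnecessary. Corollary~A.11 already provides one subsequence with perimeter convergence at a.e.\ $t$, and uniqueness of the identified limit makes the $t$-dependent Mantegazza subsequence harmless.
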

	
	\begin{proof}
		Choose $c$ with
		\[
			T+\tau<c<\inf_{\partial D}w_p.
		\]
		We first justify uniform separation from the artificial outer face.
		Put $m=\inf_{\partial D}w_p$ and choose $\eta>0$ so that
		$c+2\eta<m$.  Write
		$\Gamma=\partial_{\operatorname{int}}D$ and let $\nu_\Gamma$ denote
		its inward unit normal.  Since $\Gamma$ meets $\partial M$
		transversely, the tangential projection
		$P_{T\partial M}\nu_\Gamma$ is nonzero along $\partial\Gamma$ and
		\[
			\bigl\langle P_{T\partial M}\nu_\Gamma,
			\nu_\Gamma\bigr\rangle
			=\bigl|P_{T\partial M}\nu_\Gamma\bigr|^2>0.
		\]
		Consequently, a partition-of-unity argument gives a smooth vector
		field $X$ near $\Gamma$ which is tangent to $\partial M$ along
		$\partial M$ and satisfies
		$\langle X,\nu_\Gamma\rangle>0$ on $\Gamma$.  Its short-time flow
		preserves $\partial M$ and supplies a one-sided collar
		$\mathcal{A}$ of $\Gamma$ inside $D$, preserving the
		physical boundary along the edge.  By compactness of $\Gamma$ and
		continuity of $w_p$, this collar may be chosen with smooth inner face
		$S\subset D$ so that
		\[
			w_p>c+2\eta\qquad\text{on }\overline{\mathcal{A}}.
		\]
		The compact face $S$ stays a positive distance from $\Gamma$.
		Lemma~\ref{lem:eps-regularization}
		\ref{item:C1} therefore gives
		$w_p^{\varepsilon_k}>c+\eta$ on $S$ for all sufficiently large $k$.
		On $\Gamma$ one has $w_p^{\varepsilon_k}=w_p\ge m$.  Applying the
		mixed maximum principle in $\mathcal{A}$ to
		$u_p^{\varepsilon_k}=e^{-w_p^{\varepsilon_k}/(p-1)}$ and the
		constant $e^{-(c+\eta)/(p-1)}$ gives
		\[
			w_p^{\varepsilon_k}\ge c+\eta
			\qquad\text{throughout }\mathcal{A}.
		\]
		After discarding finitely many indices, removing a slightly smaller
		outer collar produces a compact set $K_c\Subset D$ containing
		$\{w_p^{\varepsilon_k}\le c\}$ and $\{w_p\le c\}$ for every $k$.
		In particular, all level surfaces with $t\in[0,T+\tau]$ have this
		common compact support, away from the artificial outer face.

		We record explicitly the convergence of the level-set term on the
		right-hand side of~\eqref{eq:global-regularity-eps}.  By coarea and
		the preceding common-support statement,
		\[
		\int_0^{T+\tau}\!\int_{\Sigma_s^{\varepsilon_k}}
		|\nabla w_p^{\varepsilon_k}|^2\,\dd\mathcal{H}^2\,\dd s
		=\int_{K_c}\mathbf 1_{\{0<w_p^{\varepsilon_k}<T+\tau\}}
		|\nabla w_p^{\varepsilon_k}|^3\,\dd\mathcal{H}^3.
		\]
		The $C^1$-convergence on $K_c$ gives a uniform bound for the
		integrands.  Away from the two level sets $\{w_p=0\}$ and
		$\{w_p=T+\tau\}$, the indicator functions converge pointwise.  On
		either exceptional level set, $|\nabla w_p|=0$ almost everywhere with
		respect to $\mathcal{H}^3$, as holds for every Sobolev function on each
		of its level sets.  Hence the integrands converge almost everywhere,
		and dominated convergence yields
		\begin{align*}
		&\lim_{k\to\infty}
		\int_0^{T+\tau}\!\int_{\Sigma_s^{\varepsilon_k}}
		|\nabla w_p^{\varepsilon_k}|^2\,\dd\mathcal{H}^2\,\dd s\\
		&\qquad=\int_0^{T+\tau}\!\int_{\Sigma_s}
		|\nabla w_p|^2\,\dd\mathcal{H}^2\,\dd s.
		\end{align*}
		The same $C^1$-convergence in the initial collar gives convergence of
		the last boundary integral in
		\eqref{eq:global-regularity-eps}.  Thus its entire right-hand side
		converges to the right-hand side of
		\eqref{eq:interior-esitmate-2ff-p}.  Fix a vanishing sequence
		$(\varepsilon_k)_{k\in\mathbb{N}}$.  By Sard's theorem and
		coarea, we may restrict to a full measure set of $t\in(0,T)$
		for which every $\Sigma_t^{\varepsilon_k}$ is smooth and
		\[
		\mathcal{H}^2(\Sigma_t\cap\operatorname{Crit}(w_p))=0,
		\qquad
		\mathcal{H}^1(\partial\Sigma_t\cap\operatorname{Crit}(w_p))=0.
		\]
		The second assertion uses coarea on $\partial M$ and the
		Neumann condition.
		Here $w_p^{\varepsilon_k}$ is $C^{1,\beta}$ up to $\partial M$ and
		$w_p^{\varepsilon_k}\to w_p$ in $C^{1}$ on the part of
		$\partial M$ contained in $K_c$, by the
		conormal boundary estimate of Lieberman used in
		Lemma~\ref{lem:eps-regularization}(i); hence the restrictions to
		$\partial M$ are Lipschitz and converge in $W^{1,1}$.  Therefore,
		by~\cite[Corollary~A.11]{BPP} applied on $M$ and on $\partial M$,
		after passing to a further common subsequence the area and lengths
		converge:
		\begin{equation}\label{eq:area-length-conv}
			\mathcal{H}^{2}(\Sigma_t^{\varepsilon_k})
			\to\mathcal{H}^{2}(\Sigma_t),\qquad
			\mathcal{H}^{1}(\partial\Sigma_t^{\varepsilon_k})
			\to\mathcal{H}^{1}(\partial\Sigma_t).
		\end{equation}
		
		Fix such a $t$.  All level surfaces lie in a common compact
		subset of $D$, including its physical boundary, and their masses
		converge by~\eqref{eq:area-length-conv}.  On every compact subset
		of the interior regular set $\{|\nabla w_p|>0\}$, elliptic
		bootstrapping gives $w_p^{\varepsilon_k}\to w_p$ smoothly:
		the $C^1$ convergence supplies a uniform positive lower bound
		for the gradient there, so the equations are uniformly elliptic
		with uniformly controlled smooth coefficients.  The implicit
		function theorem then gives smooth convergence of the local
		level surfaces.  Every varifold subsequential limit therefore
		agrees with the canonical varifold of $\Sigma_t$ on its interior
		regular part.  This part carries the entire mass of $\Sigma_t$;
		the critical part and its boundary have zero $\mathcal{H}^2$
		measure.  Equality of total masses excludes any remaining mass
		in the limit.  Compactness of finite measures on the Grassmann
		bundle over the common compact support now implies convergence
		of the whole selected sequence of canonical varifolds.

		We next establish curvature regularity, separately from this
		mass-and-tangent-plane convergence.  Fatou's lemma gives only
		\begin{equation*}
			\liminf_{k\to\infty}\int_{\Sigma_t^{\varepsilon_k}}
			|h^{\varepsilon_k}|^2\,\dd\mathcal{H}^2<\infty
			\quad\text{for a.e. }t\in(0,T).
		\end{equation*}
		For each such $t$, choose a subsequence attaining this finite
		lower limit; this subsequence may depend on $t$.  Along it the
		masses, boundary masses, and $L^2$ curvature norms are bounded.
		The curvature-varifold compactness theorem
		\cite[Theorem~6.1]{Mantegazza} applies, and the already identified
		canonical limit is a curvature varifold with $L^2$ second
		fundamental form.  This proves \textit{(i)} without asserting a
		common pointwise curvature bound for the original sequence.
		
		We prove \textit{(ii)} independently on $\partial M$.  Put
		$D_\partial:=D\cap\partial M$, extend the restrictions of
		$w_p^{\varepsilon_k}$ and $w_p$ by zero across
		$\Omega\cap D_\partial$, and set
		\[
			E_{k,t}:=\{w_p^{\varepsilon_k}<t\}\cap D_\partial,
			\qquad E_t:=\{w_p<t\}\cap D_\partial.
		\]
		Set $K=K_c\cap D_\partial$.  The preceding uniform separation gives
		\begin{equation*}
			\{w_p^{\varepsilon_k}\le c\}\cap D_\partial
			\subset K,
			\qquad
			\{w_p\le c\}\cap D_\partial\subset K
		\end{equation*}
		for every $k$.  Thus the zero extensions converge in
		$W^{1,1}_{\mathrm{loc}}(D_\partial)$ and their relevant sublevel
		sets have a common compact support.  Consequently, for a.e.\
		$t\in[0,T]$,
		\[
			\boldsymbol{1}_{E_{k,t}}\longrightarrow
			\boldsymbol{1}_{E_t}
			\quad\text{in }L^1(D_\partial),
		\]
		and \cite[Corollary~A.11]{BPP}, now applied intrinsically to the
		two-dimensional manifold $\partial M$, gives
		\begin{equation}\label{eq:boundary-perimeter-conv}
			\mathcal{H}^1(\partial^*E_{k,t}\cap D_\partial)
			\longrightarrow
			\mathcal{H}^1(\partial^*E_t\cap D_\partial).
		\end{equation}
		For such a $t$, let $W_{k,t}$ denote the canonical integral
		$1$-varifold of $\partial^*E_{k,t}\cap D_\partial$ in
		$\partial M$.  Its mass is
		uniformly bounded by~\eqref{eq:boundary-perimeter-conv}; since all
		the curves under consideration lie in a fixed compact subset of
		$\partial M$, every subsequence of $(W_{k,t})_k$ has a further
		varifold-convergent subsequence.  If $W_t$ is any such limit, the
		$L^1$-convergence of $E_{k,t}$ and
		\eqref{eq:boundary-perimeter-conv}, together with
		\cite[Corollary~A.12(1)]{BPP} applied on $\partial M$, identify it
		uniquely as
		\[
			W_t=\mathbf{v}(\partial^*E_t\cap D_\partial,1).
		\]
		Consequently the whole sequence $W_{k,t}$ converges to $W_t$.
		For a.e.\ $t$, the reduced boundaries inside $D_\partial$ agree
		up to $\mathcal{H}^1$-null sets with
		$\partial\Sigma_t^{\varepsilon_k}$ and $\partial\Sigma_t$,
		respectively.  This proves \textit{(ii)}.
		
		The $t$-dependent subsequence attaining the lower limit also gives
		\[
		\int_{\Sigma_t}|h|^2\,\dd\mathcal{H}^2
		\le\liminf_k\int_{\Sigma_t^{\varepsilon_k}}
		|h^{\varepsilon_k}|^2\,\dd\mathcal{H}^2.
		\]
		Integrating this inequality and using Fatou's lemma in
		\eqref{eq:global-regularity-eps} proves \textit{(iii)}.
	\end{proof}
	
	\begin{lem}\label{lem:kg-convergence}
		For a.e.\ $t\in[0,T]$, with the sequence
		$(\varepsilon_k)_{k\in\mathbb{N}}$ as in
		Theorem~\ref{thm:varifold-convergence-free-boundary},
		the following convergences hold:
		\begin{enumerate}
			\item[(i)]
			$\displaystyle\lim_{k\to+\infty}
			\int_{\partial\Sigma_t^{\varepsilon_k}}
			\operatorname{II}_{\partial M}(\nu^{\varepsilon_k},
			\nu^{\varepsilon_k})\,\dd\mathcal{H}^{1}
			=\int_{\partial\Sigma_t}
			\operatorname{II}_{\partial M}(\nu,\nu)\,\dd\mathcal{H}^{1}$;
			\item[(ii)]
			$\displaystyle\lim_{k\to+\infty}
			\int_{\partial\Sigma_t^{\varepsilon_k}}
			k_g^{\varepsilon_k}\,\dd\mathcal{H}^{1}
			=\int_{\partial\Sigma_t}
			k_g\,\dd\mathcal{H}^{1}$.
		\end{enumerate}
	\end{lem}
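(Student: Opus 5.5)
For (i), I plan to rewrite the boundary integral as an integral over the physical boundary, so that only the $C^1$ convergence of Lemma~\ref{lem:eps-regularization}\ref{item:C1} is needed. Along $\partial M$ the Neumann condition makes $\nu^{\varepsilon_k}=\nabla w_p^{\varepsilon_k}/|\nabla w_p^{\varepsilon_k}|$ tangent to $\partial M$. It is then the unit conormal of the curve $\partial\Sigma_t^{\varepsilon_k}$ inside $\partial M$. The integrand $\operatorname{II}(\nu^{\varepsilon_k},\nu^{\varepsilon_k})$ is therefore a bounded continuous function of the tangent direction of the boundary curve: explicitly, $\operatorname{II}(J\tau,J\tau)$, with $\tau$ the unit tangent and $J$ rotation by $\pi/2$ in $T\partial M$. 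Hence $\int_{\partial\Sigma_t^{\varepsilon_k}}\operatorname{II}(\nu^{\varepsilon_k},\nu^{\varepsilon_k})\,\dd\mathcal{H}^1$ is exactly the action of the $1$-varifold $W_{k,t}$ on the continuous function $(x,\ell)\mapsto\operatorname{II}_x(\ell^\perp,\ell^\perp)$. This function is well defined on the Grassmannian of lines, since it is quadratic and even in $\tau$, and it has compact support in the common set $K=K_c\cap D_\partial$. Item (ii) of Theorem~\ref{thm:varifold-convergence-free-boundary} gives $W_{k,t}\to W_t=\mathbf v(\partial^*E_t\cap D_\partial,1)$. It remains to check that, on the set where $|\nabla w_p|>0$, the approximate tangent of $\partial^*E_t$ is orthogonal to $\nabla w_p$, so the limit integrand is $\operatorname{II}(\nu,\nu)$. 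For a.e.\ $t$ the critical part of $\partial\Sigma_t$ is $\mathcal H^1$-null, so the limit is exactly $\int_{\partial\Sigma_t}\operatorname{II}(\nu,\nu)$.

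For (ii), I will avoid passing $k_g$ to the limit directly, because varifold convergence gives no control on curvature of the boundary curves. Instead I express $k_g$ through the ambient data. For a surface meeting $\partial M$ orthogonally, the geodesic curvature of $\partial\Sigma$ in $\Sigma$, with respect to the outward conormal $-\mathbf n$, equals $\operatorname{II}_{\partial M}(\tau,\tau)$ up to a fixed sign convention. Here $\tau$ is the unit tangent of the curve. This follows since $\langle\nabla_\tau\tau,\mathbf n\rangle=\operatorname{II}(\tau,\tau)$ and $\mathbf n$ is the conormal of $\Sigma$ along its boundary; this is the identity referred to as \eqref{eq:kg-II}. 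Using $\operatorname{II}(\tau,\tau)+\operatorname{II}(\nu,\nu)=H_{\partial M}$ on $\partial M$, I would write
\[
\int_{\partial\Sigma_t^{\varepsilon_k}}k_g^{\varepsilon_k}\,\dd\mathcal H^1=\int_{\partial\Sigma_t^{\varepsilon_k}}\bigl(H_{\partial M}-\operatorname{II}(\nu^{\varepsilon_k},\nu^{\varepsilon_k})\bigr)\,\dd\mathcal H^1 .
\]
The first term converges by the length convergence \eqref{eq:area-length-conv}, or again by varifold convergence since $H_{\partial M}$ is continuous. The second converges by (i). To conclude, I must identify the limit with $\int_{\partial\Sigma_t}k_g$. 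For the weak level set, $k_g$ along $\partial\Sigma_t$ has to be defined as the boundary curvature term of the curvature varifold, namely as its generalized boundary contribution in Mantegazza's sense. I would show that it coincides with $\operatorname{II}(\tau,\tau)$ on the regular part, where $\Sigma_t$ is smooth and orthogonal to $\partial M$, and that the regular part carries full $\mathcal H^1$ measure for a.e.\ $t$.

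The main obstacle is this last identification: showing that the curvature-varifold limit $\Sigma_t$ has no singular boundary measure concentrated on $\partial\Sigma_t\cap\operatorname{Crit}(w_p)$. I would address it through the first-variation identity. For the smooth approximants, the first variation of $\Sigma_t^{\varepsilon_k}$ with respect to fields $X$ tangent to $\partial M$ equals $-\int H^{\varepsilon_k}\langle X,\nu\rangle$, and the boundary term vanishes because the surface is orthogonal to $\partial M$. For general $X$ the boundary term is $\int\langle X,-\mathbf n\rangle$ along the curve. These identities pass to the limit by varifold convergence together with the $L^2$ bounds of Theorem~\ref{thm:varifold-convergence-free-boundary}(iii). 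This shows that the limit boundary measure is the $\mathcal H^1$-absolutely continuous measure $-\mathbf n\,\mathcal H^1\llcorner\partial\Sigma_t$, which justifies the formula for $k_g$ almost everywhere.
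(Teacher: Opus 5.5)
Your argument for (i) and (ii) is the paper's: you test the boundary $1$-varifold convergence of Theorem~\ref{thm:varifold-convergence-free-boundary}(ii) with the continuous function $\eta(x)\operatorname{II}_x(\xi,\xi)$, $\xi\perp L$, and you write $k_g=\operatorname{II}(\dot\gamma,\dot\gamma)=H_{\partial M}-\operatorname{II}(\nu,\nu)$ via \eqref{eq:kg-II}. Your final paragraph is unnecessary; note also that Mantegazza's boundary measure is the conormal measure, not a curvature term. On the regular part of $\partial\Sigma_t$, which has full $\mathcal{H}^1$-measure by coarea on $\partial M$, the Neumann condition for $w_p$ itself makes $\Sigma_t$ smooth and orthogonal to $\partial M$, so $k_g=\operatorname{II}(\dot\gamma,\dot\gamma)$ holds pointwise, and the identified limit $\mathbf{v}(\partial^*E_t\cap D_\partial,1)$ already excludes any singular concentration.
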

	
	\begin{proof}
		By Theorem~\ref{thm:varifold-convergence-free-boundary}(ii), the
		canonical $1$-varifolds of
		$\partial\Sigma_t^{\varepsilon_k}$ converge to that of
		$\partial\Sigma_t$ on $G_{1}(\partial M)$.  For
		$(x,L)\in G_1(\partial M)$, let $L^\perp$ be the orthogonal
		complement of $L$ in $T_x\partial M$, and define
		\begin{equation*}
			\Phi_\perp(x,L)
			:=\operatorname{II}_{\partial M,x}(\xi,\xi),
			\qquad \xi\in L^\perp,\quad |\xi|=1.
		\end{equation*}
		This is well defined because it is unchanged when $\xi$ is replaced
		by $-\xi$, and it is continuous on $G_1(\partial M)$.  Along a
		free-boundary level curve, $L$ is spanned by its unit tangent
		$\dot\gamma$, whereas the surface normal $\nu$ is tangent to
		$\partial M$ and orthogonal to $\dot\gamma$.  Hence
		$L^\perp=\operatorname{span}\{\nu\}$.  Notice that coarea on
		$\partial M$ gives
		$\mathcal{H}^1(\partial\Sigma_t\cap\{|\nabla w_p|=0\})=0$
		for a.e.\ $t$, so
		this identification is valid at $\mathcal{H}^1$-a.e.\ point of the
		limit curve.  Choose $\eta\in C_c(\partial M)$ equal to one on the
		fixed compact set containing all these curves.  Testing the
		varifold convergence with $\eta(x)\Phi_\perp(x,L)$ proves
		\textit{(i)}.
		
		For \textit{(ii)}, at $\mathcal{H}^1$-a.e.\ point of
		$\partial\Sigma_t$, choose a unit tangent $\dot\gamma$; then
		$\{\dot\gamma,\nu\}$ is an orthonormal basis of $T(\partial M)$,
		where $\nu=\nabla w_p/|\nabla w_p|$.  The analogous pointwise
		statement holds on each $\partial\Sigma_t^{\varepsilon_k}$.
		Taking the trace of
		$\operatorname{II}_{\partial M}$ gives
		$H_{\partial M}
		=\operatorname{II}_{\partial M}(\dot\gamma,\dot\gamma)
		+\operatorname{II}_{\partial M}(\nu,\nu)$
		at these points, where $H_{\partial M}$ is the mean curvature of
		$\partial M$ in $M$, a smooth function on $\partial M$
		depending only on the ambient geometry.  By
		\eqref{eq:kg-II},
		$k_g=\operatorname{II}_{\partial M}(\dot\gamma,
		\dot\gamma)$.  Hence
		$k_g
		=H_{\partial M}-\operatorname{II}_{\partial M}(\nu,\nu)$.
		The integral of $H_{\partial M}$ converges by testing the
		varifolds with the compactly supported continuous function
		$(x,L)\mapsto\eta(x)H_{\partial M}(x)$, while the normal term
		converges by
		\textit{(i)}.  Subtracting the two limits proves \textit{(ii)}.
	\end{proof}
	
	The next proposition is the free-boundary counterpart of
	\cite[Proposition~5.1]{BPP}.  Two points require attention here: the
	physical-boundary term must be retained and passed to the limit, and a
	common a.e. subsequence is extracted only after the total density is
	shown to converge in $L^1_{\mathrm{loc}}(0,T)$.  The latter order avoids
	inferring a common pointwise curvature bound from a space--time bound.

	\begin{prop}[Convergence of the second fundamental form
		with free boundary]
		\label{prop:convergence-second-fundamental-form}
		Under the hypotheses of Theorem~\ref{thm:varifold-convergence-free-boundary},
		let $(\varepsilon_k)_{k\in\mathbb{N}}$ be a vanishing sequence
		along which its varifold convergences hold.  Then, after passing to a
		further common subsequence, for a.e.\ $t\in(0,T)$,
		\begin{align}
			\lim_{k\to\infty}\int_{\Sigma_t^{\varepsilon_k}}
			|H^{\varepsilon_k}|^2\,\dd\mathcal{H}^2
			&=\int_{\Sigma_t}|H|^2\,\dd\mathcal{H}^2,
			\label{eq:ConvergenceMeanCurvature}\\
			\lim_{k\to\infty}\int_{\Sigma_t^{\varepsilon_k}}
			|h^{\varepsilon_k}|^2\,\dd\mathcal{H}^2
			&=\int_{\Sigma_t}|h|^2\,\dd\mathcal{H}^2.
			\label{eq:ConvergenceSecondFundamentalForm}
		\end{align}
	\end{prop}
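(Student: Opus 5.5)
The plan is to follow the strategy of \cite[Proposition~5.1]{BPP}: prove convergence of the space--time integrals first, then extract a common a.e.\ subsequence, and carry along the physical-boundary term.

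\emph{Step 1: an exact identity at level $\varepsilon$.} Repeat Step~1 of Proposition~\ref{prop:free-boundary-regularity} without discarding anything. Test the divergence identity for $Y_\varepsilon$ with $\varphi(w_p^{\varepsilon_k})$, $\varphi\in\operatorname{Lip}_c(0,T)$ nonnegative, and pass to the limit in the gradient cutoff $\delta\to0$. Since $\varepsilon>0$ is fixed, \cite{BPP} shows the cutoff error $\int\chi_\delta'\langle Y_\varepsilon,\nabla|\nabla w_p^\varepsilon|\rangle\varphi$ tends to zero: the regularised equation is nondegenerate and $w_p^\varepsilon$ is smooth. I will check that the free-boundary term, which has a favourable sign, does not spoil this. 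Via coarea this gives, for every such $\varphi$,
\[
\int_0^T\varphi(s)\Bigl(\int_{\Sigma_s^{\varepsilon_k}}(\mathcal D_+^{\varepsilon_k}+\mathcal D_\sigma^{\varepsilon_k})\,\dd\mathcal H^2+\int_{\partial\Sigma_s^{\varepsilon_k}}\frac{\operatorname{II}(\nabla w,\nabla w)}{|\nabla w|^2}\,\dd\mathcal H^1\Bigr)\dd s=-\int\langle Y_{\varepsilon_k},\nabla\varphi(w_p^{\varepsilon_k})\rangle .
\]
The right-hand side converges as $k\to\infty$ to the corresponding expression for $w_p$. This uses the $C^1$ convergence of Lemma~\ref{lem:eps-regularization}(i) together with the weak $W^{1,2}_{\mathrm{loc}}$ convergence of $|\nabla w_p^{\varepsilon_k}|$ from item (ii). The terms in $\mathcal D_\sigma^{\varepsilon_k}$ that are linear in $\nabla|\nabla w|$ also converge weakly against $C^0$-convergent coefficients. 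The two places where $\theta_\varepsilon$ enters are handled by noting that $\theta_{\varepsilon_k}\to0$ on $\{|\nabla w_p|>0\}$ and that the critical set carries no level-set mass. The boundary integrand converges by Lemma~\ref{lem:kg-convergence}(i) and the $C^1$ convergence on $\partial M$. Combining the limit identity with Lemma~\ref{lem:Fp-derivative} for $w_p$, I conclude that
\[
\int_0^T\varphi\int_{\Sigma_s^{\varepsilon_k}}\Bigl(|\mathring h^{\varepsilon_k}|^2+\text{gradient-square terms}\Bigr)
\]
converges to the analogous quantity for $w_p$.

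\emph{Step 2: lower semicontinuity.} Each nonnegative piece of $\mathcal D_+$ is separately lower semicontinuous. For $|\mathring h|^2$ this comes from Theorem~\ref{thm:varifold-convergence-free-boundary}(i) and Mantegazza's compactness. For the gradient terms it comes from weak $L^2$ convergence. Convergence of the sum therefore forces convergence of each piece, and in particular the space--time integrals of $|\mathring h^{\varepsilon_k}|^2$ converge. Because the functions $s\mapsto\int_{\Sigma_s^{\varepsilon_k}}|\mathring h^{\varepsilon_k}|^2$ have $\liminf\ge$ the limit density pointwise (Fatou plus varifold lower semicontinuity), a standard argument gives convergence in $L^1_{\mathrm{loc}}(0,T)$: the negative parts tend to zero by dominated convergence, and the integrals converge. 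A common subsequence then converges for a.e.\ $s$.

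\emph{Step 3: from $\mathring h$ to $H$ and $h$.} Write $|h|^2=|\mathring h|^2+H^2/2$, so it suffices to treat $H^2$. Here I use \eqref{eq:H-epsilon}. On each level, $H^{\varepsilon_k}$ is an explicit expression in $|\nabla w|$, $\partial_\nu|\nabla w|$ and $\theta_{\varepsilon_k}$. The gradient-square convergence from Step~2 upgrades weak to strong $L^2$ convergence in space--time, so $\int_0^T\varphi\int H^2$ converges, again in $L^1_{\mathrm{loc}}$ in $t$, and a further common subsequence yields \eqref{eq:ConvergenceMeanCurvature} and \eqref{eq:ConvergenceSecondFundamentalForm}.

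\emph{Main obstacle.} The delicate point is Step~1: showing the $\delta$-cutoff error vanishes for fixed $\varepsilon$ in the presence of the Neumann face, and justifying the limit of the non-sign-definite terms in $\mathcal D_\sigma$. If the cutoff error cannot be shown to vanish, I would keep it as a nonnegative defect. Its sign still yields an inequality in the correct direction, and that inequality is sufficient when combined with the lower-semicontinuity argument in Step~2.
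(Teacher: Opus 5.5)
Your plan is correct in outline and has the same skeleton as the paper. You keep the nonnegative physical-boundary term $B_\varepsilon$ in the divergence identity for $Y_\varepsilon$ and compare with the exact identity $A+R+B=L$ for the limit field. You then combine fixed-level lower semicontinuity with space--time convergence to get $L^1_{\mathrm{loc}}(0,T)$ convergence and a common a.e.\ subsequence.

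\textbf{The fixed-$\varepsilon$ cutoff.} The paper never asserts that the gradient-cutoff error vanishes for fixed $\varepsilon$. It works only with $A_\varepsilon+R_\varepsilon+B_\varepsilon\le L_\varepsilon$, so your ``fallback'' is exactly its route. Your exact-identity claim is unnecessary, and it is not justified by smoothness of $w_p^\varepsilon$ alone.

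\textbf{Where you genuinely differ.} The paper first proves fixed-level lower semicontinuity for the total density $E_k=I_k^0+I_k^\perp+I_k^\top$. The reason is that along a subsequence with $E_k(t)$ bounded, the normal term (with coefficient $\Psi_p\ge c_p$) controls $H^{\varepsilon_k}$ through \eqref{eq:H-epsilon}; that bound is what makes Mantegazza's compactness applicable. The paper then reads off each component, and the mean curvature, on the fixed level via \eqref{eq:short-H-square} and \cite[(5.5)--(5.6)]{BPP}. You instead split into components already in space--time. You then get $\int H^2$ from space--time convergence plus a fixed-level lower bound. This spares you the fixed-level convergence of the pairing term $\langle\vec H^{\varepsilon_k},\nabla w_p^{\varepsilon_k}\rangle$, at the price of a second $L^1_{\mathrm{loc}}$ extraction.

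\textbf{Two things your route must still supply.}

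\emph{Separate fixed-level lower bounds.} You need them for $\int|\mathring h|^2$ and for $\int H^2$ individually. Mantegazza's theorem does not apply to a subsequence on which only $\int|\mathring h^{\varepsilon_k}|^2$ is bounded, as your Step~2 uses it. Both bounds do follow from two facts: the smooth convergence of $w_p^{\varepsilon_k}$ on compact subsets of $\{|\nabla w_p|>0\}$, and $\mathcal{H}^2(\Sigma_t\cap\operatorname{Crit}(w_p))=0$. Alternatively, pass through the total density first, as the paper does.

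\emph{The ``weak-to-strong upgrade'' in Step~3 is not literal.} The quantity you have shown to converge carries the weight $\Psi_p(\theta_{\varepsilon_k})$, while $H^2$ carries $q(\theta_{\varepsilon_k})^2$. You need the $\kappa\theta$ device of \eqref{eq:short-theta-vanishing}, run in space--time, to show that the $\theta$-weighted normal term tends to zero.
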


	\begin{proof}
		Put $\mathcal{U}=D\setminus\Omega$ and
		$\Gamma=\partial M\cap\mathcal{U}$.  We work on the common
		full-measure set of levels for which the varifold convergences in
		Theorem~\ref{thm:varifold-convergence-free-boundary} hold, every
		$\Sigma_t^{\varepsilon_k}$ is smooth, and
		$\mathcal{H}^2(\Sigma_t\cap\operatorname{Crit}(w_p))=0$.
		All densities below are extended by zero on critical sets.

		\medskip\noindent
		\textit{1. Fixed-level lower semicontinuity.}
		Set
		\begin{align*}
			I_k^0(t)&=\int_{\Sigma_t^{\varepsilon_k}}
			|\mathring{h}^{\varepsilon_k}|^2
			\,\dd\mathcal{H}^2,\\
			I_k^\perp(t)&=\int_{\Sigma_t^{\varepsilon_k}}
			\Psi_p(\theta_{\varepsilon_k})
			\frac{\bigl|\nabla^\perp|\nabla w_p^{\varepsilon_k}|\bigr|^2}
			{|\nabla w_p^{\varepsilon_k}|^2}
			\,\dd\mathcal{H}^2,\\
			I_k^\top(t)&=\int_{\Sigma_t^{\varepsilon_k}}
			\frac{\bigl|\nabla^\top|\nabla w_p^{\varepsilon_k}|\bigr|^2}
			{|\nabla w_p^{\varepsilon_k}|^2}
			\,\dd\mathcal{H}^2,
			\qquad E_k=I_k^0+I_k^\perp+I_k^\top.
		\end{align*}
		Let $I^0,I^\perp,I^\top$ and $E$ denote the corresponding
		quantities for $w_p$, with
		$\theta=0$ and hence coefficient $\Psi_p(0)$ in $I^\perp$.
		We claim that, for almost every $t\in(0,T)$,
		\begin{equation}\label{eq:short-LSC-total}
			E(t)\le\liminf_{k\to\infty}E_k(t).
		\end{equation}

		Fix such a level and take a subsequence realizing the lower limit.
		There is nothing to prove if this lower limit is infinite.
		Otherwise $\sup_kE_k(t)<\infty$.  Since
		$\Psi_p\ge c_p>0$, formula~\eqref{eq:H-epsilon}, the uniform
		$C^1$ bound, and the area convergence give
		\[
			\sup_k\int_{\Sigma_t^{\varepsilon_k}}
			|H^{\varepsilon_k}|^2\,\dd\mathcal{H}^2<\infty.
		\]
		Thus the integrals of
		$|h^{\varepsilon_k}|^2
		=|\mathring{h}^{\varepsilon_k}|^2
		+|H^{\varepsilon_k}|^2/2$ are uniformly bounded on the level.
		Together with the area and boundary-length convergence, this permits
		the application of curvature-varifold compactness with boundary
		\cite[Theorem~6.1]{Mantegazza}.  Its ordinary varifold limit is the
		canonical varifold of $\Sigma_t$ by
		Theorem~\ref{thm:varifold-convergence-free-boundary}.
		To apply the Euclidean compactness theorem, smoothly extend a
		neighborhood of the common compact support across the physical
		boundary and isometrically embed it in Euclidean space.  The bounded
		second fundamental form of this embedding and the area bound
		convert the intrinsic curvature bound into the required extrinsic
		bound; subtracting the smooth ambient contribution identifies the
		intrinsic curvature of the limit.

		The lower-semicontinuity arguments in
		\cite[Proof of Proposition~5.1, equations~(5.4), (5.8),
		and~(5.9)]{BPP} now give, along this subsequence,
		\begin{equation}\label{eq:short-component-LSC}
			I^0(t)\le\liminf_k I_k^0(t),\qquad
			I^\perp(t)\le\liminf_k I_k^\perp(t),\qquad
			I^\top(t)\le\liminf_k I_k^\top(t).
		\end{equation}
		For later use, the same argument for the normal component gives,
		for every nonnegative $F\in C([0,1])$,
		\begin{equation}\label{eq:short-weighted-normal-LSC}
			F(0)\int_{\Sigma_t}
			\frac{|\nabla^\perp|\nabla w_p||^2}
			{|\nabla w_p|^2}\,\dd\mathcal{H}^2
			\le\liminf_k\int_{\Sigma_t^{\varepsilon_k}}
			F(\theta_{\varepsilon_k})
			\frac{\bigl|\nabla^\perp|\nabla w_p^{\varepsilon_k}|\bigr|^2}
			{|\nabla w_p^{\varepsilon_k}|^2}\,\dd\mathcal{H}^2.
		\end{equation}
		We briefly record why the boundary causes no change in these local
		arguments.  In the mean-curvature tests, the fields
		$F(\theta_{\varepsilon_k})\nabla w_p^{\varepsilon_k}$ are tangent
		to $\partial M$ by the Neumann condition, and hence annihilate the
		free-boundary conormal term.  For the tangential-gradient term one
		first uses test fields compactly supported in
		$\mathcal{U}\setminus\partial M$ and then exhausts $\mathcal{U}$;
		the omitted boundary curve is $\mathcal{H}^2$-negligible.
		Therefore the proof in BPP applies after the preceding
		curvature-varifold compactness step.  Adding
		\eqref{eq:short-component-LSC} proves
		\eqref{eq:short-LSC-total}.  Although the subsequence used in this
		verification may depend on $t$, the conclusion concerns the lower
		limit of the original sequence and therefore requires no measurable
		choice.
		More generally, the same argument proves the component and weighted
		normal lower bounds along any subsequence with bounded total energy
		on the fixed level: one applies compactness to a further subsequence
		realizing the lower limit of the component under consideration.

		\medskip\noindent
		\textit{2. No loss of total density.}
		First, \eqref{eq:interior-regularity-eps}, the uniform $C^1$ bound,
		and the common compact support imply
		$\sup_k\int_I E_k(t)\,\dd t<\infty$ for every
		$I\Subset(0,T)$.  Here the gradient-square integral on the
		right-hand side of the regularity estimate is uniformly bounded
		by coarea.  Since $\Psi_p$ is bounded on $[0,1]$, that estimate
		controls all three components of $E_k$.  Thus
		\eqref{eq:short-LSC-total} and Fatou's lemma give
		\[
			\int_I E(t)\,\dd t
			\le\liminf_k\int_I E_k(t)\,\dd t<\infty.
		\]
		Fix $0\le\psi\in C_c^\infty(0,T)$ and define
		\begin{align*}
			L_\varepsilon&=-\int_{\mathcal{U}}
			\langle Y_\varepsilon,\nabla[\psi(w_p^\varepsilon)]\rangle
			\,\dd\mathcal{H}^3,\\
			A_\varepsilon&=\int_{\mathcal{U}}\psi(w_p^\varepsilon)
			|\nabla w_p^\varepsilon|\mathcal{D}_+^\varepsilon
			\,\dd\mathcal{H}^3,\\
			R_\varepsilon&=\int_{\mathcal{U}}\psi(w_p^\varepsilon)
			|\nabla w_p^\varepsilon|\mathcal{D}_\sigma^\varepsilon
			\,\dd\mathcal{H}^3,\\
			B_\varepsilon&=\int_\Gamma\psi(w_p^\varepsilon)
			\frac{\operatorname{II}(\nabla w_p^\varepsilon,
			\nabla w_p^\varepsilon)}
			{|\nabla w_p^\varepsilon|}\,\dd\mathcal{H}^2.
		\end{align*}
		Use $L,A,R,B$ for the corresponding quantities at
		$\varepsilon=0$.  To relate the limiting field to
		Lemma~\ref{lem:Fp-derivative}, denote it by $Z$; then
		\[
			\begin{aligned}
			Z&=\nabla|\nabla w_p|
			+(p-2)\nabla^\perp|\nabla w_p|\\
			&=-(3-p)Y+\frac{2-p}{3-p}|\nabla w_p|\nabla w_p,
			\end{aligned}
		\]
		where $Y$ is the field in \eqref{eq:Y-def}.  The additional
		field is tangent to $\partial M$.  With the gradient cutoff
		$\chi_\delta$ used in that lemma, its cutoff error satisfies
		\[
			\begin{aligned}
			&\left|\int_{\mathcal U}\psi(w_p)
			\chi_\delta'(|\nabla w_p|)|\nabla w_p|^2
			\partial_\nu|\nabla w_p|\,\dd\mathcal H^3\right|\\
			&\qquad\le C\delta\int_K
			|\nabla|\nabla w_p||\,\dd\mathcal H^3\longrightarrow0,
			\end{aligned}
		\]
		where $K$ is a fixed compact set containing the support of
		$\psi(w_p)$.  The last integral is finite by
		Lemma~\ref{lem:eps-regularization}(ii).  Combining this with
		\eqref{eq:Fp-cutoff-error} gives the exact weak identity for $Z$,
		whose divergence on the regular set is
		$|\nabla w_p|(\mathcal D_++\mathcal D_\sigma)$ and whose outward
		physical-boundary flux is
		$-\operatorname{II}(\nabla w_p,\nabla w_p)/|\nabla w_p|$.
		The retained-boundary form of
		\eqref{eq:key-inequality} and the exact weak identity for the limit
		field therefore give, respectively,
		\begin{equation}\label{eq:short-energy-comparison}
			A_\varepsilon+R_\varepsilon+B_\varepsilon\le L_\varepsilon,
			\qquad A+R+B=L.
		\end{equation}
		The first relation is only an inequality because the nonnegative
		critical-set cutoff error has been discarded.

		The weak convergence of
		$\nabla|\nabla w_p^{\varepsilon_k}|$ in $L^2_{\mathrm{loc}}$
		and the uniform convergence of the remaining gradient factors give
		\[
			L_{\varepsilon_k}\longrightarrow L,\qquad
			R_{\varepsilon_k}\longrightarrow R;
		\]
		the second convergence is exactly the argument of
		\cite[(5.13)]{BPP}.  For the additional boundary term, define on
		$T\partial M$
		\[
			\Phi(x,z)=
			\begin{cases}
				\operatorname{II}_x(z,z)/|z|,&z\ne0,\\
				0,&z=0.
			\end{cases}
		\]
		The estimate $|\Phi(x,z)|\le C|z|$ shows that $\Phi$ is continuous
		at $z=0$.  Since the Neumann condition makes the gradients tangent
		to $\partial M$, the boundary $C^1$ convergence in
		Lemma~\ref{lem:eps-regularization}(i) yields
		\begin{equation}\label{eq:short-boundary-energy}
			B_{\varepsilon_k}
			=\int_\Gamma\psi(w_p^{\varepsilon_k})
			\Phi(x,\nabla w_p^{\varepsilon_k})\,\dd\mathcal{H}^2
			\longrightarrow B.
		\end{equation}
		This is the only new convergence term relative to
		\cite[Proposition~5.1]{BPP}.

		By coarea, $A_{\varepsilon_k}=\int_0^T\psi(t)E_k(t)\,\dd t$
		and $A=\int_0^T\psi(t)E(t)\,\dd t$.  Hence
		\eqref{eq:short-LSC-total} and Fatou's lemma give
		$A\le\liminf_kA_{\varepsilon_k}$.  Combining this inequality with
		\eqref{eq:short-energy-comparison} and
		\eqref{eq:short-boundary-energy} gives
		\[
			A\le\liminf_kA_{\varepsilon_k}
			\le\limsup_kA_{\varepsilon_k}
			\le L-R-B=A.
		\]
		Consequently,
		\begin{equation}\label{eq:short-total-no-loss}
			\int_0^T\psi(t)E_k(t)\,\dd t
			\longrightarrow\int_0^T\psi(t)E(t)\,\dd t
		\end{equation}
		for every nonnegative $\psi\in C_c^\infty(0,T)$.

		\medskip\noindent
		\textit{3. A common subsequence and the normal remainder.}
		The combination of \eqref{eq:short-LSC-total} and
		\eqref{eq:short-total-no-loss} implies
		$E_k\to E$ in $L^1_{\mathrm{loc}}(0,T)$.  Indeed,
		$(E-E_k)_+\to0$ pointwise and is bounded by $E$, which is locally
		integrable by the regularity estimate and Fatou argument in Step~2.
		If $I\Subset(0,T)$ and $\psi$ equals one on $I$, then
		\[
			|E_k-E|=(E_k-E)+2(E-E_k)_+
		\]
		together with \eqref{eq:short-total-no-loss} and dominated
		convergence proves convergence in $L^1(I)$.  A diagonal extraction
		therefore gives one common subsequence such that
		\begin{equation}\label{eq:short-pointwise-total}
			E_k(t)\longrightarrow E(t)
			\qquad\text{for a.e. }t\in(0,T).
		\end{equation}

		Fix a level satisfying \eqref{eq:short-pointwise-total}.  The
		fixed-level compactness argument of Step~1 applies to this common
		subsequence, whose total energy is now bounded on the level.
		Applying that argument to a subsequence realizing each component's
		lower limit proves \eqref{eq:short-component-LSC} for the common
		subsequence itself.  Since the three nonnegative components satisfy
		\eqref{eq:short-component-LSC} and their sum converges, each
		component converges.  In particular,
		\begin{align}
			I_k^0(t)&\longrightarrow I^0(t),
			\label{eq:short-traceless-no-loss}\\
			I_k^\perp(t)&\longrightarrow I^\perp(t).
			\label{eq:short-normal-no-loss}
		\end{align}

		Choose $0<\kappa<\min_{[0,1]}\Psi_p$.  Both
		$\Psi_p(\theta)-\kappa\theta$ and $\kappa\theta$ are nonnegative.
		Applying \eqref{eq:short-weighted-normal-LSC} to the first weight
		and using \eqref{eq:short-normal-no-loss} for their sum yields
		\begin{equation}\label{eq:short-theta-vanishing}
			\int_{\Sigma_t^{\varepsilon_k}}
			\theta_{\varepsilon_k}
			\frac{\bigl|\nabla^\perp|\nabla w_p^{\varepsilon_k}|\bigr|^2}
			{|\nabla w_p^{\varepsilon_k}|^2}\,\dd\mathcal{H}^2
			\longrightarrow0.
		\end{equation}
		Because $\Psi_p$ is Lipschitz and $\Psi_p(0)>0$,
		\eqref{eq:short-normal-no-loss} and
		\eqref{eq:short-theta-vanishing} also give
		\begin{equation}\label{eq:short-unweighted-normal}
			\int_{\Sigma_t^{\varepsilon_k}}
			\frac{\bigl|\nabla^\perp|\nabla w_p^{\varepsilon_k}|\bigr|^2}
			{|\nabla w_p^{\varepsilon_k}|^2}\,\dd\mathcal{H}^2
			\longrightarrow
			\int_{\Sigma_t}
			\frac{|\nabla^\perp|\nabla w_p||^2}
			{|\nabla w_p|^2}\,\dd\mathcal{H}^2.
		\end{equation}

		\medskip\noindent
		\textit{4. Mean curvature and the full second fundamental form.}
		Set
		\[
			q(\theta)=1+\frac{2-p}{p-1}\theta,
			\qquad q_k=q(\theta_{\varepsilon_k}).
		\]
		With $\vec{H}^{\varepsilon_k}=-H^{\varepsilon_k}
		\nu^{\varepsilon_k}$, formula~\eqref{eq:H-epsilon} is equivalent to
		\begin{equation}\label{eq:short-H-square}
			|H^{\varepsilon_k}|^2
			=q_k^2\left[(p-1)^2
			\frac{\bigl|\nabla^\perp|\nabla w_p^{\varepsilon_k}|\bigr|^2}
			{|\nabla w_p^{\varepsilon_k}|^2}
			-|\nabla w_p^{\varepsilon_k}|^2\right]
			-2q_k\langle\vec{H}^{\varepsilon_k},
			\nabla w_p^{\varepsilon_k}\rangle.
		\end{equation}
		The estimate $|q_k^2-1|\le C\theta_{\varepsilon_k}$ and
		\eqref{eq:short-theta-vanishing}--\eqref{eq:short-unweighted-normal}
		give convergence of the first normal term in
		\eqref{eq:short-H-square}.  The fixed-level compactness from Step~1
		and the convergence arguments
		\cite[(5.5) and~(5.6)]{BPP}, applied with
		$F(\theta)=q(\theta)^2$ and $F(\theta)=q(\theta)$, give convergence
		of the gradient-square and mean-curvature-pairing terms.
		Integrating \eqref{eq:short-H-square} therefore proves
		\eqref{eq:ConvergenceMeanCurvature}.
		Finally, in dimension three,
		\[
			|h^\varepsilon|^2
			=|\mathring{h}^\varepsilon|^2
			+\frac12|H^\varepsilon|^2.
		\]
		Combining this identity with
		\eqref{eq:short-traceless-no-loss} proves
		\eqref{eq:ConvergenceSecondFundamentalForm}.
		The diagonal extraction in Step~3 is common to both conclusions.
	\end{proof}
	
	We can now complete the proof of the weak Gauss--Bonnet theorem
	with free boundary.
	
	\begin{thm}[Weak Gauss--Bonnet with free boundary]\label{thm:weak-GB}
		Let $(M,\partial M,g)$ be a Ricci-pinched $3$-manifold with
		convex boundary and superquadratic volume growth.  For
		$p\in(1,2)$, let $w_p$ solve~\eqref{eq:wp}.  Then for a.e.\ $t$,
		\[
		\int_{\Sigma_t}\mathrm{R}^{\top}\,\dd\mathcal{H}^{2}
		+2\int_{\Sigma_t\cap\partial M}k_g\,\dd\mathcal{H}^{1}
		\in 4\pi\mathbb{Z},
		\]
		where $\mathrm{R}^{\top}$ is given by~\eqref{eq:Gauss-equation}
		and $k_g$ is the geodesic curvature of
		$\Sigma_t\cap\partial M$ within $\Sigma_t$.
	\end{thm}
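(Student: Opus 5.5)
The plan is to prove the identity first on the smooth approximating levels $\Sigma_t^{\varepsilon_k}$ and then pass to the limit $k\to\infty$ using the convergence results already established. Fix $T<\inf_{\partial D}w_p$ and a vanishing sequence $(\varepsilon_k)$. Pass to the common subsequence given by Theorem~\ref{thm:varifold-convergence-free-boundary}, Lemma~\ref{lem:kg-convergence}, and Proposition~\ref{prop:convergence-second-fundamental-form}. For a.e.\ $t\in(0,T)$ we then have:
\begin{itemize}
\item[(a)] every $\Sigma_t^{\varepsilon_k}$ is a smooth compact surface meeting $\partial M$ orthogonally, by Lemma~\ref{lem:eps-regularization}\ref{item:levels};
\item[(b)] the varifold convergences (i) and (ii) hold;
\item[(c)] the boundary integrals of $k_g$ converge;
\item[(d)] $\int|H^{\varepsilon_k}|^2\to\int|H|^2$ and $\int|h^{\varepsilon_k}|^2\to\int|h|^2$.
\end{itemize}
Since $T$ is arbitrary, and $D$ can be taken from the exhaustion so that $\inf_{\partial D}w_p$ is as large as we like, this covers a.e.\ $t>0$.

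On each smooth compact free-boundary surface $\Sigma_t^{\varepsilon_k}$ we apply classical Gauss--Bonnet. Its Gaussian curvature is $\mathrm{R}^{\top}/2$, and its boundary lies in $\partial M$. This gives
\[
\int_{\Sigma_t^{\varepsilon_k}}\mathrm{R}^{\top,\varepsilon_k}\,\dd\mathcal{H}^2+2\int_{\partial\Sigma_t^{\varepsilon_k}}k_g^{\varepsilon_k}\,\dd\mathcal{H}^1=4\pi\chi(\Sigma_t^{\varepsilon_k})\in4\pi\mathbb{Z}.
\]
We then rewrite $\mathrm{R}^{\top,\varepsilon_k}$ through the Gauss equation~\eqref{eq:Gauss-equation}. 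The orthogonality of $\Sigma_t^{\varepsilon_k}$ and $\partial M$ gives $k_g=\operatorname{II}_{\partial M}(\dot\gamma,\dot\gamma)$, as already used in Lemma~\ref{lem:kg-convergence}.

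Next we show that each term converges to the corresponding term on $\Sigma_t$.
\begin{itemize}
\item The terms $\int(H^2-|h|^2)$ converge by Proposition~\ref{prop:convergence-second-fundamental-form}.
\item The term $\int\mathrm{R}$ converges by testing the varifold convergence of Theorem~\ref{thm:varifold-convergence-free-boundary}(i) against the continuous function $\eta(x)\mathrm{R}(x)$, with $\eta$ a cutoff equal to one on the common compact support $K_c$.
\item The term $\int\operatorname{Ric}(\nu,\nu)$ converges in the same way. The test function is $\eta(x)\operatorname{Ric}_x(\xi,\xi)$ for $\xi\perp L$ a unit vector. It is well defined and continuous on $G_2(TM)$, since the normal line of a plane in dimension three is determined up to sign.
\item The boundary term converges by Lemma~\ref{lem:kg-convergence}(ii).
\end{itemize}
Hence the left-hand sides converge to
\[
\int_{\Sigma_t}\mathrm{R}^{\top}\,\dd\mathcal{H}^2+2\int_{\Sigma_t\cap\partial M}k_g\,\dd\mathcal{H}^1 .
\]
A convergent sequence in the discrete set $4\pi\mathbb{Z}$ is eventually constant, so the limit also lies in $4\pi\mathbb{Z}$.

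The main obstacle is making sure the convergences are taken along one common subsequence for a.e.\ $t$, and that the limiting quantities on $\Sigma_t$ are the ones appearing in the statement. This means $H$, $h$ and $\nu$ must be the curvature-varifold quantities, agreeing with the classical ones on the regular part, and $k_g$ must be defined a.e.\ on $\partial\Sigma_t$. Proposition~\ref{prop:convergence-second-fundamental-form} supplies the common subsequence after its diagonal extraction. The identification relies on the facts that $\mathcal{H}^2(\Sigma_t\cap\operatorname{Crit}(w_p))=0$ and $\mathcal{H}^1(\partial\Sigma_t\cap\operatorname{Crit}(w_p))=0$, and that convergence is smooth on the regular part. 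We must also check that the level set has no boundary components other than $\Sigma_t\cap\partial M$. This follows from the uniform separation from the artificial face and from $\partial_{\operatorname{int}}\Omega$, both proved in Theorem~\ref{thm:varifold-convergence-free-boundary}. Finally, we intersect the countably many full-measure sets of levels and discard the null sets where the Euler characteristics are undefined.
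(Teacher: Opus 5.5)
Your proposal is correct and follows essentially the same route as the paper. You apply classical Gauss--Bonnet on each smooth free-boundary level $\Sigma_t^{\varepsilon_k}$ and pass to the limit term by term: the varifold convergence handles $\mathrm{R}$ and $\operatorname{Ric}(\nu,\nu)$ (with the Grassmannian test functions written out), Proposition~\ref{prop:convergence-second-fundamental-form} handles $H^2$ and $|h|^2$, and Lemma~\ref{lem:kg-convergence} handles $k_g$; discreteness of $4\pi\mathbb{Z}$ then finishes the argument. Your extra bookkeeping only makes explicit what the paper leaves implicit: exhausting in $D$ and $T$ to cover a.e.\ $t>0$, and the remark that the levels have no boundary off $\partial M$. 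For the initial face, that last point is immediate from $w_p^{\varepsilon}=0$ on $\partial_{\operatorname{int}}\Omega$ and $t>0$, rather than from the separation argument in Theorem~\ref{thm:varifold-convergence-free-boundary}.
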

	
	\begin{proof}
		Let $(\varepsilon_k)_{k\in\mathbb{N}}$ be a vanishing sequence
		as in Proposition~\ref{prop:convergence-second-fundamental-form},
		so that the conclusions of
		Theorem~\ref{thm:varifold-convergence-free-boundary},
		Lemma~\ref{lem:kg-convergence}, and
		Proposition~\ref{prop:convergence-second-fundamental-form} all
		hold for a common set of full measure in $[0,T]$.  Fix $t$ in
		this set.
		
		\medskip\noindent
		\textbf{1.  Classical Gauss--Bonnet for the smooth level sets.}
		For each $k$, $\Sigma_t^{\varepsilon_k}$ is a smooth compact
		surface with smooth boundary
		$\partial\Sigma_t^{\varepsilon_k}\subset\partial M$, meeting
		$\partial M$ orthogonally.  By the Gauss equation~\eqref{eq:Gauss-equation},
		the induced scalar curvature of $\Sigma_t^{\varepsilon_k}$ is
		\[
		\mathrm{R}^{\top}_{\varepsilon_k}
		=\mathrm{R}
		-2\operatorname{Ric}(\nu^{\varepsilon_k},\nu^{\varepsilon_k})
		+(H^{\varepsilon_k})^{2}
		-|h^{\varepsilon_k}|^{2}.
		\]
		The classical Gauss--Bonnet theorem, applied separately to every
		connected component of $\Sigma_t^{\varepsilon_k}$, yields
		\begin{equation}\label{eq:classical-GB}
			\int_{\Sigma_t^{\varepsilon_k}}
			\mathrm{R}^{\top}_{\varepsilon_k}\,\dd\mathcal{H}^{2}
			+2\int_{\partial\Sigma_t^{\varepsilon_k}}
			k_g^{\varepsilon_k}\,\dd\mathcal{H}^{1}
			=4\pi\,\chi(\Sigma_t^{\varepsilon_k})
			\in4\pi\mathbb{Z},
		\end{equation}
		Here both closed components and components with boundary are allowed.
		In either case their Euler characteristics are integers, so the sum
		on the right-hand side belongs to $4\pi\mathbb{Z}$.  The Neumann
		condition is used only to identify the boundary geodesic-curvature
		term when a component meets $\partial M$; it does not assert that
		every level set has nonempty boundary.
		
		\medskip\noindent
		\textbf{2.  Convergence of the Gauss--Bonnet functional.}
		We pass each term in~\eqref{eq:classical-GB} to the limit
		$k\to+\infty$.  By the varifold convergence
		$\Sigma_t^{\varepsilon_k}\to\Sigma_t$
		(Theorem~\ref{thm:varifold-convergence-free-boundary}(i))
		and the continuity of the ambient scalar curvature $\mathrm{R}$
		on $M$,
		\[
		\int_{\Sigma_t^{\varepsilon_k}}
		\mathrm{R}\,\dd\mathcal{H}^{2}
		\;\xrightarrow{\;k\to+\infty\;}
		\int_{\Sigma_t}\mathrm{R}\,\dd\mathcal{H}^{2}.
		\]
		For the Ricci term, the varifold convergence implies
		convergence of the Grassmann directions (the unit normals
		$\nu^{\varepsilon_k}$), so the same argument gives
		\[
		\int_{\Sigma_t^{\varepsilon_k}}
		\operatorname{Ric}(\nu^{\varepsilon_k},\nu^{\varepsilon_k})
		\,\dd\mathcal{H}^{2}
		\;\xrightarrow{\;k\to+\infty\;}
		\int_{\Sigma_t}
		\operatorname{Ric}(\nu,\nu)\,\dd\mathcal{H}^{2},
		\]
		since $\operatorname{Ric}$ is a smooth $(0,2)$-tensor on $M$.
		The mean curvature and second fundamental form converge by
		Proposition~\ref{prop:convergence-second-fundamental-form}:
		\[
		\lim_{k\to+\infty}
		\int_{\Sigma_t^{\varepsilon_k}}|H^{\varepsilon_k}|^{2}
		=\int_{\Sigma_t}|H|^{2},\qquad
		\lim_{k\to+\infty}
		\int_{\Sigma_t^{\varepsilon_k}}|h^{\varepsilon_k}|^{2}
		=\int_{\Sigma_t}|h|^{2}.
		\]
		The geodesic curvature term converges by
		Lemma~\ref{lem:kg-convergence}:
		\[
		\lim_{k\to+\infty}
		\int_{\partial\Sigma_t^{\varepsilon_k}}
		k_g^{\varepsilon_k}\,\dd\mathcal{H}^{1}
		=\int_{\partial\Sigma_t}k_g\,\dd\mathcal{H}^{1}.
		\]
		Summing the convergences via the Gauss equation, we obtain
		\begin{equation}\label{eq:GB-convergence}
			\lim_{k\to+\infty}
			\Bigl(
			\int_{\Sigma_t^{\varepsilon_k}}
			\mathrm{R}^{\top}_{\varepsilon_k}\,\dd\mathcal{H}^{2}
			+2\int_{\partial\Sigma_t^{\varepsilon_k}}
			k_g^{\varepsilon_k}\,\dd\mathcal{H}^{1}
			\Bigr)
			=\int_{\Sigma_t}\mathrm{R}^{\top}\,\dd\mathcal{H}^{2}
			+2\int_{\partial\Sigma_t}k_g\,\dd\mathcal{H}^{1}.
		\end{equation}
		
		\medskip\noindent
		\textbf{3.  Quantisation.}
		The entire left-hand side of~\eqref{eq:classical-GB}
		belongs to $4\pi\mathbb{Z}$.  Since $4\pi\mathbb{Z}$ is a
		discrete subset of $\mathbb{R}$, the limit in
		\eqref{eq:GB-convergence} must also belong to
		$4\pi\mathbb{Z}$, which is precisely the statement of the
		theorem.
	\end{proof}
	
	\subsection{Willmore-type estimates}
	
	Combining the weak Gauss--Bonnet theorem with the Gauss equation
	and the pinching conditions yields inequalities relating the
	Willmore energy $\int H^{2}$ to the Ricci curvature and traceless
	second fundamental form (cf.\ Huisken--Koerber~\cite{HK24} and
	Benatti--Le\'{o}n Quir\'{o}s--Oronzio--Pluda~\cite{BPO}).
	
	\begin{lem}
		For a.e.\ $t$:
		\begin{enumerate}
			\item[(i)] If $\int_{\Sigma_t}\mathrm{R}^{\top}
			+2\int_{\Sigma_t\cap\partial M}k_g\le0$, then
			\begin{equation}
				\int_{\Sigma_t}H^{2}
				\le 2\int_{\Sigma_t}
				\bigl(\operatorname{Ric}(\nu,\nu)
				+|\mathring{h}|^{2}\bigr).
				\label{eq:Willmore-case1}
			\end{equation}
			\item[(ii)] If $\int_{\Sigma_t}\mathrm{R}^{\top}
			+2\int_{\Sigma_t\cap\partial M}k_g\ge4\pi$, then
			\begin{equation}
				\int_{\Sigma_t}H^{2}
				\ge 8\pi
				-4\int_{\Sigma_t\cap\partial M}k_g
				-\frac{2}{\rho}
				\int_{\Sigma_t}\operatorname{Ric}(\nu,\nu).
				\label{eq:Willmore-case2}
			\end{equation}
		\end{enumerate}
	\end{lem}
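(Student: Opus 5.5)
Both parts follow by integrating the Gauss equation \eqref{eq:Gauss-equation} over $\Sigma_t$, for the a.e.\ $t$ at which Theorem~\ref{thm:weak-GB} applies and $\Sigma_t$ is a curvature varifold with $L^2$ second fundamental form. The Gauss equation holds pointwise on the regular part, which carries all the $\mathcal H^2$-measure. In dimension two we have $|h|^2=|\mathring h|^2+\tfrac12H^2$, so
\[
\int_{\Sigma_t}\mathrm R^\top=\int_{\Sigma_t}\Bigl(\mathrm R-2\operatorname{Ric}(\nu,\nu)+\tfrac12H^2-|\mathring h|^2\Bigr).
\]
The only curvature input is $\mathrm R-2\operatorname{Ric}(\nu,\nu)$, which I bound in two ways.

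For (i), use $\operatorname{Ric}\ge0$. Since $\mathrm R=\operatorname{Ric}(\nu,\nu)+\operatorname{Ric}(e_1,e_1)+\operatorname{Ric}(e_2,e_2)$ for an orthonormal frame $\{e_1,e_2\}$ of $T\Sigma_t$, we get $\mathrm R\ge\operatorname{Ric}(\nu,\nu)$, hence $\mathrm R-2\operatorname{Ric}(\nu,\nu)\ge-\operatorname{Ric}(\nu,\nu)$. This gives
\[
\int\mathrm R^\top\ge\int\Bigl(\tfrac12H^2-\operatorname{Ric}(\nu,\nu)-|\mathring h|^2\Bigr).
\]
By the hypothesis of (i), $\int\mathrm R^\top\le-2\int k_g$. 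On a free-boundary curve we have $k_g=\operatorname{II}_{\partial M}(\dot\gamma,\dot\gamma)$ (the identity \eqref{eq:kg-II} used in Lemma~\ref{lem:kg-convergence}), so convexity gives $k_g\ge0$ and therefore $\int\mathrm R^\top\le0$. Combining the two bounds and multiplying by $2$ yields \eqref{eq:Willmore-case1}.

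For (ii), use the pinching \eqref{eq:intro-Ricci-pinching}: $\operatorname{Ric}(\nu,\nu)\ge\rho\mathrm R$, so $\mathrm R\le\rho^{-1}\operatorname{Ric}(\nu,\nu)$. Dropping the terms $-2\operatorname{Ric}(\nu,\nu)\le0$ and $-|\mathring h|^2\le0$ gives
\[
\int\mathrm R^\top\le\frac1\rho\int\operatorname{Ric}(\nu,\nu)+\frac12\int H^2.
\]
The hypothesis of (ii) gives $4\pi-2\int k_g\le\int\mathrm R^\top$. Rearranging and multiplying by $2$ yields \eqref{eq:Willmore-case2}. In both cases the boundary term is simply carried along as stated, and convexity of $\partial M$ is used only in (i).

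The only delicate point is justifying the pointwise identities in the weak setting. The Gauss equation and $|h|^2=|\mathring h|^2+\tfrac12H^2$ must be applied to the integrated quantities in Theorem~\ref{thm:weak-GB}. I would do this by noting that these integrals were defined, through Proposition~\ref{prop:convergence-second-fundamental-form}, on the regular part, where all quantities are classical and the critical set is $\mathcal H^2$-null. The identity for $k_g$ holds $\mathcal H^1$-a.e.\ on $\partial\Sigma_t$ by the argument in Lemma~\ref{lem:kg-convergence}. All integrands are integrable by Theorem~\ref{thm:varifold-convergence-free-boundary}(iii), so the rearrangements are legitimate.
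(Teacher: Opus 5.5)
Your proof is correct and matches the paper's argument. The paper also integrates the Gauss equation in the form $H^2=2\mathrm R^\top-2\mathrm R+4\operatorname{Ric}(\nu,\nu)+2|\mathring h|^2$. In case (i) it uses $k_g=\operatorname{II}(\dot\gamma,\dot\gamma)\ge0$ and $\mathrm R\ge\operatorname{Ric}(\nu,\nu)$; in case (ii) it drops $\operatorname{Ric}(\nu,\nu)$ and $|\mathring h|^2$ and applies $\mathrm R\le\rho^{-1}\operatorname{Ric}(\nu,\nu)$.

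Your care about the weak setting goes beyond the paper, which treats these as pointwise identities. It is partly automatic, since Theorem~\ref{thm:weak-GB} defines $\mathrm R^\top$ through \eqref{eq:Gauss-equation}.
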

	\begin{proof}
		From $|h|^{2}=|\mathring{h}|^{2}+\frac12 H^{2}$ and~\eqref{eq:Gauss-equation},
		\[
		H^{2}=2\mathrm{R}^{\top}-2\mathrm{R}
		+4\operatorname{Ric}(\nu,\nu)+2|\mathring{h}|^{2}.
		\]
		Integrating,
		\[
		\int_{\Sigma_t}H^{2}
		= \int_{\Sigma_t}\bigl(
		2\mathrm{R}^{\top}-2\mathrm{R}
		+4\operatorname{Ric}(\nu,\nu)+2|\mathring{h}|^{2}\bigr).
		\]
		
		On $\partial\Sigma_t=\Sigma_t\cap\partial M$, let $\bar{\nu}$ be
		the inward normal to $\partial\Sigma_t$ within $\Sigma_t$.
		Orthogonality of $\Sigma_t$ and $\partial M$ forces
		$\bar{\nu}=\mathbf{n}$.  With $\dot\gamma$ the unit tangent,
		\begin{equation}
			\int_{\partial\Sigma_t}k_g
			= \int_{\partial\Sigma_t}
			\langle\nabla_{\dot\gamma}\dot\gamma,\bar{\nu}\rangle
			= \int_{\partial\Sigma_t}
			\operatorname{II}(\dot\gamma,\dot\gamma)\ge0,
			\label{eq:kg-II}
		\end{equation}
		by convexity $\operatorname{II}\ge0$.
		
		\medskip\noindent\textit{Case~1.}
		$\int_{\Sigma_t}\mathrm{R}^{\top}+2\int_{\Sigma_t\cap\partial M}k_g\le0$.
		Then $\int_{\Sigma_t}\mathrm{R}^{\top}\le0$ and
		using the inequality $\operatorname{Ric}(\nu,\nu)-\mathrm{R}\le0$,
		which follows from $\operatorname{Ric}\ge0$ and pinching,
		\begin{align*}
			\int_{\Sigma_t}H^{2}
			&\le \int_{\Sigma_t}
			\bigl(2(\operatorname{Ric}(\nu,\nu)-\mathrm{R})
			+2\operatorname{Ric}(\nu,\nu)
			+2|\mathring{h}|^{2}\bigr) \\
			&\le 2\int_{\Sigma_t}
			\bigl(\operatorname{Ric}(\nu,\nu)+|\mathring{h}|^{2}\bigr),
		\end{align*}
		which is~\eqref{eq:Willmore-case1}.
		
		\medskip\noindent\textit{Case~2.}
		$\int_{\Sigma_t}\mathrm{R}^{\top}+2\int_{\Sigma_t\cap\partial M}k_g\ge4\pi$.
		Then $\int_{\Sigma_t}2\mathrm{R}^{\top}
		\ge8\pi-4\int_{\partial\Sigma_t}k_g$.
		Using $\operatorname{Ric}(\nu,\nu)\ge0$ and
		$|\mathring{h}|^{2}\ge0$,
		\[
		\int_{\Sigma_t}H^{2}
		\ge \int_{\Sigma_t}(2\mathrm{R}^{\top}-2\mathrm{R})
		\ge 8\pi-4\int_{\partial\Sigma_t}k_g
		-2\int_{\Sigma_t}\mathrm{R}.
		\]
		Pinching $\operatorname{Ric}\ge\rho\mathrm{R}g$ gives
		$\mathrm{R}\le\rho^{-1}\operatorname{Ric}(\nu,\nu)$, so
		\[
		\int_{\Sigma_t}H^{2}
		\ge 8\pi-4\int_{\partial\Sigma_t}k_g
		-\frac{2}{\rho}
		\int_{\Sigma_t}\operatorname{Ric}(\nu,\nu),
		\]
		which is~\eqref{eq:Willmore-case2}.
	\end{proof}
	
	\begin{lem}\label{lem:asymptotic}
		Under the assumptions of Theorem~\ref{thm:main}, suppose that
		$F_p(t_*)<2\pi$ at some $t_*\ge0$, where for $t_*=0$ we use the
		geometric initial value and for $t_*>0$ the locally absolutely
		continuous representative.  Then there exist $T_0\ge t_*$ and
		$C=C(T_0)>0$
		such that for a.e.\ $t\ge T_0$,
		\[
		F_p(t)\le C\,e^{-\frac{2}{3-p}t}.
		\]
	\end{lem}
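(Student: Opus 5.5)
The plan is to turn the monotonicity formula \eqref{eq:Fp-derivative} into the differential inequality $F_p'\le-\frac{2}{3-p}F_p$ for a.e.\ large $t$, and then integrate it using $F_p\in W^{1,1}_{\mathrm{loc}}$.

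\emph{Starting point.} By Lemma~\ref{lem:Fp-endpoint} and Lemma~\ref{lem:Fp-derivative}, $F_p$ is nonincreasing, so $F_p(t)\le F_p(t_*)=:2\pi-\delta_0<2\pi$ for all $t\ge t_*$. By Lemma~\ref{lem:Fp-Gp-relation}, $F_p\ge G_p\ge0$. For a.e.\ $t$, Theorem~\ref{thm:weak-GB} says that the Gauss--Bonnet quantity of $\Sigma_t$ lies in $4\pi\mathbb Z$. Hence every such level is either of type (i) (quantity $\le0$) or of type (ii) (quantity $\ge4\pi$) of the preceding Willmore lemma. I will write $-F_p'=\mathcal{V}+\mathcal{B}$, where $\mathcal V\ge0$ is the volume dissipation in \eqref{eq:Fp-derivative} and $\mathcal B\ge0$ is the edge term.

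\emph{Type (i) levels.} Combine \eqref{eq:Fp-bound} with \eqref{eq:Willmore-case1}:
\[
F_p\le\tfrac14\int_{\Sigma_t}H^2\le\tfrac12\int_{\Sigma_t}\bigl(\operatorname{Ric}(\nu,\nu)+|\mathring h|^2\bigr)\le\tfrac{3-p}{2}\,\mathcal V\le-\tfrac{3-p}{2}F_p'.
\]
This gives exactly $F_p'\le-\frac{2}{3-p}F_p$ on these levels.

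\emph{Type (ii) levels.} I would use the exact identity
\[
\tfrac14\int_{\Sigma_t} H^2=F_p+\int_{\Sigma_t}\Bigl(\tfrac H2-\tfrac{|\nabla w_p|}{3-p}\Bigr)^2 .
\]
Inserting \eqref{eq:Willmore-case2} yields
\[
2\pi-F_p\le\int_{\Sigma_t}\bigl(\tfrac H2-\tfrac{|\nabla w_p|}{3-p}\bigr)^2+\tfrac{1}{2\rho}\int_{\Sigma_t}\operatorname{Ric}(\nu,\nu)+\int_{\partial\Sigma_t}k_g .
\]
The first two terms are bounded by $C\mathcal V$. The boundary term is where the second-fundamental-form pinching enters, and I expect it to be the main point. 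By \eqref{eq:kg-II}, $k_g=\operatorname{II}(\dot\gamma,\dot\gamma)\le H_{\partial M}$. By the pinching \eqref{eq:intro-II-pinching} applied to $\nu$, which is tangent to $\partial M$ by the Neumann condition, $\operatorname{II}(\nu,\nu)\ge\lambda H_{\partial M}\ge\lambda k_g$. Since $\nabla w_p=|\nabla w_p|\nu$ on the edge, this gives $\int_{\partial\Sigma_t}k_g\le\lambda^{-1}(3-p)\,\mathcal B$. Together,
\[
-F_p'\ge c_1\,(2\pi-F_p)\ge c_1\delta_0\qquad\text{on type (ii) levels},
\]
with $c_1=c_1(p,\rho,\lambda)>0$.

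\emph{Conclusion.} Let $L=\lim_{t\to\infty}F_p(t)\ge0$. If $L>0$, then a.e.\ $t\ge t_*$ satisfies $-F_p'\ge\min\{\tfrac{2L}{3-p},\,c_1\delta_0\}>0$. This contradicts the boundedness of $F_p$ from below. Hence $L=0$, and I can choose $T_0\ge t_*$ with $F_p\le\min\{\pi,\ \tfrac{3-p}{2}c_1\pi\}$ on $[T_0,\infty)$. Then on type (ii) levels $-F_p'\ge c_1\pi\ge\frac{2}{3-p}F_p$, so the inequality $F_p'\le-\frac{2}{3-p}F_p$ holds for a.e.\ $t\ge T_0$ in both cases. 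Since $F_p\in W^{1,1}_{\mathrm{loc}}$, the function $e^{\frac{2}{3-p}t}F_p$ is nonincreasing on $[T_0,\infty)$. This gives $F_p(t)\le F_p(T_0)\,e^{\frac{2}{3-p}T_0}e^{-\frac{2}{3-p}t}$, which is the claim.

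The step I expect to require the most care is the bound on $\int_{\partial\Sigma_t} k_g$ by the edge dissipation $\mathcal B$. It needs the pinching in both tangent directions of $\partial M$ and a.e.\ identification of $\nu$ on $\partial\Sigma_t$, which Lemma~\ref{lem:kg-convergence} supplies.
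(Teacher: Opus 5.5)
Your proposal is correct and follows essentially the same route as the paper. Both use the weak Gauss--Bonnet dichotomy, the inequality $F_p'\le-\frac{2}{3-p}F_p$ on levels of the first type, and a bound $-F_p'\ge c\,(2\pi-F_p)$ on levels of the second type, where $\int_{\partial\Sigma_t}k_g$ is absorbed into the edge dissipation via $\operatorname{II}(\nu,\nu)\ge\lambda H_{\partial M}\ge\lambda k_g$, followed by a persistence argument. The differences are cosmetic: you absorb each deficit term into the dissipation with generic constants, whereas the paper normalises $\rho\le\lambda/2\le\frac14$ and replaces $\frac{3-p}{p-1}$ by $\rho$; and you prove $F_p\to0$ before choosing $T_0$, whereas the paper directly finds a time with $F_p\le\rho(2\pi-F_p)$ and notes that this persists.
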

	\begin{proof}
		The argument follows the strategy of
		Benatti--Le\'{o}n Quir\'{o}s--Oronzio--Pluda~\cite[Lemma~3.3]{BPO};
		the main difference is the additional control of the boundary
		terms arising from $\Sigma_t\cap\partial M$, which we handle
		via the second-fundamental-form pinching condition.
		By Lemmas~\ref{lem:Fp-derivative} and~\ref{lem:Fp-endpoint},
		$F_p(t)\le F_p(t_*)<2\pi$ for every $t\ge t_*$, up to the choice
		of its absolutely continuous representative.
		
		Theorem~\ref{thm:weak-GB} gives
		$\int_{\Sigma_t}\mathrm{R}^{\top}
		+2\int_{\Sigma_t\cap\partial M}k_g\in4\pi\mathbb{Z}$.
		
		\medskip\noindent\textit{Case~1.}
		$\displaystyle\int_{\Sigma_t}\mathrm{R}^{\top}
		+2\int_{\Sigma_t\cap\partial M}k_g\le0$.
		From~\eqref{eq:Fp-derivative} and~\eqref{eq:Willmore-case1},
		\begin{align*}
			-2(3-p)F_p'(t)
			&\ge 2\int_{\Sigma_t}
			\bigl(\operatorname{Ric}(\nu,\nu)+|\mathring{h}|^{2}\bigr) \\
			&\ge \int_{\Sigma_t}H^{2}
			\ge 4F_p(t).
		\end{align*}
		
		\medskip\noindent\textit{Case~2.}
		$\displaystyle\int_{\Sigma_t}\mathrm{R}^{\top}
		+2\int_{\Sigma_t\cap\partial M}k_g\ge4\pi$.
		From~\eqref{eq:Fp-derivative}, keeping the boundary term,
		\begin{align*}
			-2(3-p)F_p'(t)
			&\ge \int_{\Sigma_t}
			\Bigl(2\operatorname{Ric}(\nu,\nu)
			+\frac{3-p}{p-1}
			\bigl(H-\tfrac{2|\nabla w_p|}{3-p}\bigr)^{2}
			\Bigr)\dd\mathcal{H}^{2} \nonumber\\
			&\qquad
			+2\int_{\Sigma_t\cap\partial M}
			\operatorname{II}\!\Bigl(
			\frac{\nabla w_p}{|\nabla w_p|},
			\frac{\nabla w_p}{|\nabla w_p|}
			\Bigr)\dd\mathcal{H}^{1}.
		\end{align*}
		Denote by $\rho$ the Ricci-pinching constant
		($\operatorname{Ric}\ge\rho\mathrm{R}g$) and by
		$\lambda$ the second-fundamental-form pinching constant
		($\operatorname{II}\ge\lambda H g$).  Both
		pinching conditions become weaker when the corresponding
		constant is decreased, since $\mathrm{R}\ge0$ and
		$H\ge0$.  Hence, replacing $\rho$ and
		$\lambda$ by smaller positive constants if necessary, we may
		assume $\lambda\le\frac12$ and $\rho\le\frac12\lambda$.  Then
		\[
		\rho
		\le \frac12\,\lambda
		\le \frac14
		< \frac13
		< \frac{3-p}{p-1},
		\]
		where the last inequality follows from $p\in(1,2)$.
		
		Replacing $\frac{3-p}{p-1}$ by $\rho$ (using
		$\rho\le\frac{3-p}{p-1}$) and expanding the square,
		\begin{align*}
			-2(3-p)F_p'(t)
			&\ge \int_{\Sigma_t}
			\Bigl(2\operatorname{Ric}(\nu,\nu)
			+\rho H^{2}
			-\frac{4\rho}{3-p}H|\nabla w_p|
			+\frac{4\rho}{(3-p)^{2}}|\nabla w_p|^{2}
			\Bigr)\dd\mathcal{H}^{2} \\
			&\qquad
			+2\int_{\Sigma_t\cap\partial M}
			\operatorname{II}\!\Bigl(
			\frac{\nabla w_p}{|\nabla w_p|},
			\frac{\nabla w_p}{|\nabla w_p|}
			\Bigr)\dd\mathcal{H}^{1}.
		\end{align*}
		By~\eqref{eq:Willmore-case2},
		\begin{align*}
			-2(3-p)F_p'(t)
			&\ge \rho\Bigl(
			8\pi
			-\frac{4}{3-p}\int_{\Sigma_t}
			\bigl(H|\nabla w_p|
			-\tfrac{|\nabla w_p|^{2}}{3-p}\bigr)
			\dd\mathcal{H}^{2}
			-4\int_{\Sigma_t\cap\partial M}k_g\,\dd\mathcal{H}^{1}
			\Bigr) \\
			&\qquad
			+2\int_{\Sigma_t\cap\partial M}
			\operatorname{II}\!\Bigl(
			\frac{\nabla w_p}{|\nabla w_p|},
			\frac{\nabla w_p}{|\nabla w_p|}
			\Bigr)\dd\mathcal{H}^{1} \\
			&= \rho\bigl(8\pi-4F_p(t)\bigr)
			+2\int_{\Sigma_t\cap\partial M}
			\Bigl[
			\operatorname{II}\!\Bigl(
			\frac{\nabla w_p}{|\nabla w_p|},
			\frac{\nabla w_p}{|\nabla w_p|}
			\Bigr)
			-2\rho\,k_g
			\Bigr]\dd\mathcal{H}^{1}.
		\end{align*}
		On $\partial\Sigma_t$, $\nabla w_p/|\nabla w_p|$ is tangent to
		$\partial M$ and orthogonal to the unit tangent $\dot\gamma$
		of $\partial\Sigma_t$.  Using~\eqref{eq:kg-II},
		$k_g=\operatorname{II}(\dot\gamma,\dot\gamma)$.
		From the second-fundamental-form pinching
		$\operatorname{II}\ge\lambda H g$,
		the mean curvature decomposes as
		$H
		=\operatorname{II}(\frac{\nabla w_p}{|\nabla w_p|},
		\frac{\nabla w_p}{|\nabla w_p|})
		+\operatorname{II}(\dot\gamma,\dot\gamma)$.
		Hence
		\[
		\operatorname{II}\!\Bigl(
		\frac{\nabla w_p}{|\nabla w_p|},
		\frac{\nabla w_p}{|\nabla w_p|}
		\Bigr)
		\ge \lambda H
		\ge \lambda\,\operatorname{II}(\dot\gamma,\dot\gamma).
		\]
		Since $\rho\le\frac12\lambda$, we have
		$\lambda\ge2\rho$, and therefore
		\[
		\operatorname{II}\!\Bigl(
		\frac{\nabla w_p}{|\nabla w_p|},
		\frac{\nabla w_p}{|\nabla w_p|}
		\Bigr)
		-2\rho\,\operatorname{II}(\dot\gamma,\dot\gamma)
		\ge 0.
		\]
		Consequently
		\[
		-2(3-p)F_p'(t)
		\ge \rho\bigl(8\pi-4F_p(t)\bigr).
		\]
		
		Combining Case~1 and Case~2, for almost every
		$t\in[0,+\infty)$,
		\begin{equation}
			F_p'(t)
			\le \max\Bigl\{
			-\frac{2}{3-p}F_p(t),\;
			-\frac{2\rho}{3-p}
			\bigl(2\pi-F_p(t)\bigr)
			\Bigr\}.
			\label{eq:ODE-max}
		\end{equation}
		
		\noindent
		If the inequality
		$F_p(t)\ge\rho(2\pi-F_p(t))$
		held for every $t\ge t_*$, we would have
		\[
			F_p(t)\ge\frac{2\pi\rho}{1+\rho}>0
			\qquad\text{and}\qquad
			F_p'(t)
			\le -\frac{2\rho}{3-p}
			\bigl(2\pi-F_p(t_*)\bigr)
			< 0
		\]
		for almost every $t\ge t_*$, which forces
		$F_p(t)\to-\infty$ as $t\to+\infty$, contradicting
		$F_p\ge0$.  Therefore there must exist
		$T_0\in[t_*,+\infty)$ such that
		\[
		F_p(T_0) \le \rho\bigl(2\pi-F_p(T_0)\bigr).
		\]
		Since $F_p$ is nonincreasing (Lemma~\ref{lem:Fp-derivative}),
		this inequality persists for all $t\ge T_0$, and the second
		branch of~\eqref{eq:ODE-max} never applies beyond $T_0$.
		Hence
		\[
		F_p'(t) \le -\frac{2}{3-p}F_p(t)
		\qquad\text{for a.e.\ }t\ge T_0,
		\]
		which after integration yields
		\[
		F_p(t) \le C(T_0)\,e^{-\frac{2}{3-p}t}
		\qquad\text{for all }t\ge T_0,
		\]
		with $C(T_0)=F_p(T_0)\,e^{\frac{2}{3-p}T_0}$.
	\end{proof}
	
	\section{Proof of the Main Theorem}
	
	\subsection{Willmore energy of small Fermi coordinate balls}
	
	Let $p\in\partial M$.  In a neighbourhood of $p$ we introduce
	Fermi coordinates $(x^{1},x^{2},y)$ (see e.g.\
	\cite[Chapter~2]{Gray}), constructed in two steps.
	First, at a chosen point $p\in\partial M$, pick an orthonormal
	frame of $T_p\partial M$ and take geodesic normal coordinates on
	$\partial M$ centred at $p$---this yields coordinates
	$(x^{1},x^{2})$ on $\partial M$ near $p$.  Second, from each
	point $(\bar{x},0)\in\partial M$, follow the unit-speed geodesic
	emanating in the direction of the inward unit normal
	$\mathbf{n}$ for distance $y\ge0$.  The resulting coordinates
	satisfy $\partial M=\{y=0\}$ with $y>0$ the interior of $M$.
	By Gauss' lemma, the normal geodesics remain orthogonal to the
	level sets $y=\mathrm{const}$; hence the metric takes the form
	\[
	g = g_{ij}(x,y)\,\dd x^{i}\dd x^{j} + \dd y^{2},
	\qquad i,j=1,2,
	\]
	with $g_{iy}=0$ (orthogonality) and $g_{yy}=1$ (unit speed).
	The inward unit normal to $\partial M$ is
	$\mathbf{n}=\partial_{y}$.
	
	\begin{lem}\label{lem:Fermi-free-boundary}
		For $r>0$ sufficiently small, the relative Fermi coordinate ball
		\[
		B_r(p) = \{\, (x,y)\in M : |x|^{2}+y^{2}<r^{2} \,\}
		\]
		is well defined.  Its interior boundary
		\[
		S_r^{+}:=\partial_{\operatorname{int}}\overline{B_r(p)}
		=\{\,(x,y)\in M:|x|^2+y^2=r^2\,\}
		\]
		is a smooth
		surface, and it meets $\partial M$ orthogonally along
		$S_r^{+}\cap\partial M$; thus $B_r(p)$ satisfies the free
		boundary condition.
	\end{lem}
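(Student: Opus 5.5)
The plan is to check three things in the Fermi chart: the coordinates exist on a uniform neighbourhood, the level set of $f:=|x|^2+y^2$ is smooth, and its normal is tangent to $\partial M$ along $\{y=0\}$.

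First, well-definedness. Since $\partial M$ is smooth, the normal exponential map $(\bar x,y)\mapsto\exp_{\bar x}(y\,\mathbf n)$ is a diffeomorphism from $U\times[0,y_0)$ onto a relatively open neighbourhood of $p$ in $M$. Here $U$ is a small geodesic normal-coordinate disc of $\partial M$ about $p$. This follows from the inverse function theorem for manifolds with boundary: the differential at $(p,0)$ is the identity in the splitting $T_p\partial M\oplus\mathbb R\mathbf n$. Hence $(x^1,x^2,y)$ is a chart on $\{|x|<r_0,\ 0\le y<r_0\}$ for some $r_0>0$. For $r<r_0$ the set $B_r(p)$ is the preimage of a Euclidean half-ball and lies inside this chart, so it is well defined. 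Because this set is contained in the chart domain, it also agrees with the corresponding subset of $M$.

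Second, smoothness of $S_r^+$. The function $f$ is smooth on the chart, and its coordinate differential is $\dd f=2x^i\dd x^i+2y\,\dd y$. This vanishes only at the origin. Hence every $r\in(0,r_0)$ is a regular value both of $f$ and of $f|_{\{y=0\}}=|x|^2$. So $S_r^+=\{f=r^2\}$ is a smooth surface, and it meets $\{y=0\}$ transversally in the circle $\{|x|=r,\ y=0\}$. By definition $S_r^+$ is the closure of $\partial B_r\cap\{y>0\}$, so it includes this edge circle, consistent with the paper's convention for $\partial_{\operatorname{int}}$.

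Third, orthogonality. This is the only point that uses the Fermi structure rather than just some chart. With $g=g_{ij}\dd x^i\dd x^j+\dd y^2$, the metric gradient is
\[
\nabla f=2g^{ij}x_j\,\partial_i+2y\,\partial_y ,
\]
because $g^{iy}=0$ and $g^{yy}=1$. Along $\partial M=\{y=0\}$ the $\partial_y$-component vanishes, so $\langle\nabla f,\mathbf n\rangle=\langle\nabla f,\partial_y\rangle=2y=0$. Thus the unit normal $\nabla f/|\nabla f|$ of $S_r^+$ is tangent to $\partial M$. Equivalently, $\mathbf n$ is tangent to $S_r^+$ at the edge, so $S_r^+$ meets $\partial M$ orthogonally.

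No step is a genuine obstacle. The one place needing care is the first, namely that the boundary-version inverse function theorem gives a uniform $r_0$. In the orthogonality step it is essential that $g_{iy}\equiv0$ holds identically, not just at $p$, since this is what kills the $\partial_y$-component of $\nabla f$ on $\{y=0\}$. That identity is the Gauss lemma property of the Fermi chart recorded just before the statement.
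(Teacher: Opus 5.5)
Your proof is correct and is essentially the paper's argument. The paper likewise computes $\nabla f=2g^{ij}x_j\,\partial_i+2y\,\partial_y$ in Fermi coordinates and notes that its pairing with $\mathbf n=\partial_y$ vanishes on $\{y=0\}$; your inverse-function-theorem and regular-value steps just make explicit the well-definedness and smoothness that the paper takes for granted. One small refinement: since $\langle\nabla f,\partial_y\rangle=\partial_y f=2y$ in any chart, orthogonality needs only $\partial_y=\mathbf n$ along $\{y=0\}$, which holds by construction, so the Gauss-lemma identity $g_{iy}\equiv0$ for $y>0$ is not actually essential.
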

	\begin{proof}
		The interior boundary has interior part
		$S_r^{+}\cap\operatorname{int}(M)=\{F=r^{2},\,y>0\}$, where
		$F(x,y)=|x|^{2}+y^{2}$.  Its gradient is
		\[
		\nabla F = 2\sum_{i,j=1}^{2} g^{ij}x_j\,\partial_i + 2y\,\partial_y.
		\]
		At a point on the intersection $S_r^{+}\cap\partial M$, we
		have $y=0$ and $|x|=r$.  The normal to $S_r^{+}$ is $\nabla F$,
		and the normal to $\partial M$ is $\partial_y$.  Their inner
		product is
		\[
		g(\nabla F,\partial_y)
		= 2\sum_{i,j=1}^{2} g^{ij}x_j\,g_{iy} + 2y\,g_{yy}
		= 0,
		\]
		since $g_{iy}=0$ holds globally in Fermi coordinates and
		$y=0$ on $\partial M$.  Thus $S_r^{+}\perp\partial M$.
	\end{proof}
	
	In the closed-manifold setting, Benatti--Le\'{o}n
	Quir\'{o}s--Oronzio--Pluda~\cite{BPO} and
	Benatti--Mantegazza--Oronzio--Pluda~\cite{BMPO}
	choose as initial domain a small geodesic ball $B_r(o)$ centred
	at an interior point $o$ where the scalar curvature is positive.
	For such geodesic spheres, the Willmore energy admits the
	asymptotic expansion (see~\cite[Proposition~3.1]{Mondino})
	\[
	\int_{\partial B_r(o)} H^{2}\,\dd\mu
	= 16\pi - \frac{8\pi}{3}\mathrm{R}(o)\,r^{2} + O(r^{3}),
	\qquad r\to0^{+},
	\]
	yielding $\int_{\partial B_r(o)}H^{2}<16\pi$ for $r$ small.
	
	In our boundary setting, however, a geodesic ball centred at a
	point $p\in\partial M$ would \emph{not} satisfy the free
	boundary condition (its boundary would intersect $\partial M$
	at a non-orthogonal angle in general).  The free boundary
	condition is essential for the $C^{1,\beta}$ regularity of the
	$p$-harmonic function $u_p$ up to the corner
	$\partial\Omega\cap\partial M$, which in turn is needed for the
	weak Gauss--Bonnet theorem and the monotonicity formulas.
	We therefore replace the geodesic ball by the Fermi coordinate
	ball $B_r(p)$, which by Lemma~\ref{lem:Fermi-free-boundary} is a
	genuine free boundary domain.
	
	\begin{lem}\label{lem:Willmore-Fermi}
		Let $p\in\partial M$ and let $B_r(p)$ be the Fermi coordinate
		ball as above.  Then, as $r\to0^{+}$,
		\begin{equation}
			\int_{S_r^{+}} H^{2}\,\dd\sigma
			= 8\pi - 4\pi H_{0}\,r + O(r^{2}),
			\label{eq:Willmore-Fermi}
		\end{equation}
		where $S_r^{+}=\partial_{\operatorname{int}}\overline{B_r(p)}$ and
		$H_{0}=H(p)$.  In particular, if
		$H_{0}>0$, then for $r$ sufficiently small
		$\int_{S_r^{+}}H^{2}<8\pi$.
	\end{lem}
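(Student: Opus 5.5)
The plan is a perturbative computation after blowing up at scale $r$. First, write the Fermi metric to first order. Since $\partial_y g_{ij}|_{y=0}=-2\operatorname{II}_{ij}$ (with $\mathbf n=\partial_y$ inward and $\operatorname{II}(X,Y)=\langle\nabla_XY,\mathbf n\rangle$, we have $\Gamma^y_{ij}=-\tfrac12\partial_yg_{ij}=\operatorname{II}_{ij}$), and $(x^1,x^2)$ are normal coordinates on $\partial M$ at $p$, we get
\[
g_{ij}(x,y)=\delta_{ij}-2y\,h_{ij}+O(|x|^2+y^2),\qquad g_{iy}=0,\quad g_{yy}=1,
\]
where $h_{ij}=\operatorname{II}_{ij}(p)$ and $\operatorname{tr}h=H_0$. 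Rescale $(x,y)=r z$ with $z\in\mathbb R^3_+$. Then $r^{-2}g=\bar g+r k+O(r^2)$ in $C^\infty$ on a fixed neighbourhood of the closed unit upper hemisphere $S^+$, where $\bar g$ is Euclidean and $k=-2z_3h_{ij}\,\dd z^i\dd z^j$. The quantity $\int H^2\,\dd\sigma$ is scale invariant in dimension two, so $\int_{S_r^+}H^2\,\dd\sigma=W_{r^{-2}g}(S^+)$. Smooth dependence of the Willmore functional on the metric then gives
\[
\int_{S_r^+}H^2\,\dd\sigma=W_{\bar g}(S^+)+r\,\dot W(k)+O(r^2).
\]
Here $W_{\bar g}(S^+)=4\cdot 2\pi=8\pi$, and the $O(r^2)$ is uniform because all metrics involved are smooth in $(z,r)$ up to $z_3=0$.

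Next, compute the linearisation $\dot W(k)$ for a fixed surface under the metric variation $k$. With $N=z$ the Euclidean unit normal of the unit sphere and $\bar H=2$, there are standard formulas for the first-order change of the area element and mean curvature:
\[
\dot{(\dd\sigma)}=\tfrac12(\operatorname{tr}_{S}k)\,\dd\sigma_0,\qquad
\dot H=-\langle k,\bar h\rangle+\tfrac12\bar H\,k(N,N)-\operatorname{div}_S(k(N,\cdot)^\top)+\tfrac12\partial_N(\operatorname{tr}_{\bar g}k)-\tfrac12\partial_N\bigl(k(N,N)\bigr)\ \text{(up to the standard normalisation)}.
\]
Hence $\dot W=\int_{S^+}\bigl(4\dot H+2\operatorname{tr}_Sk\bigr)\,\dd\sigma_0$. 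Because $k$ is linear in $h$ and the hemisphere is invariant under rotations in the $(z_1,z_2)$-plane, averaging over these rotations shows that $\dot W(k)$ depends only on $\operatorname{tr}h$. So I may replace $h$ by $\tfrac{H_0}{2}\delta$, i.e.\ $k=-H_0z_3(\dd z_1^2+\dd z_2^2)$, and evaluate the resulting explicit integrals over $S^+$ in spherical coordinates. The divergence term integrates to a boundary term on the equator. That boundary term vanishes because $k(N,\cdot)$ carries a factor $z_3=0$ there, which is exactly where the free-boundary (Fermi) choice helps. The target is $\dot W=-4\pi H_0$.

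The main obstacle is the bookkeeping in this last step: fixing sign conventions for $H$ and $\operatorname{II}$ consistently, and getting the exact coefficients in $\dot H$. To pin both down, I would cross-check the coefficient against an explicit model rather than trusting the general formula. Take the complement of a Euclidean ball of radius $R$; there the boundary is not convex, $H_0=-2/R$, and the Fermi half-ball is computed exactly in polar coordinates about the centre. This fixes the sign and the factor $4\pi$. The final assertion is immediate: if $H_0>0$, then $8\pi-4\pi H_0r+O(r^2)<8\pi$ for small $r$.
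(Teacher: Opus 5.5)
Your route is sound, and it is genuinely different from the paper's. The paper works directly in Fermi coordinates with a general $\operatorname{II}$. It expands $|\nabla F|_g$, $\Delta_gF$ and $\operatorname{Hess}_gF(\nu,\nu)$ to obtain the pointwise formula $H=\frac2r+H_0\cos\theta-5\sin^2\theta\cos\theta\,\operatorname{II}(\omega,\omega)+O(r)$, together with an area density involving $\operatorname{II}(\omega,\omega)$ and $\operatorname{II}(\omega^\perp,\omega^\perp)$. Only at the end does it integrate in $\varphi$, using $\int_0^{2\pi}\operatorname{II}(\omega,\omega)\,\dd\varphi=\pi H_0$.

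You instead freeze the surface (the unit upper hemisphere in $z=(x,y)/r$) and let only the metric vary, which gives the $O(r^2)$ remainder cleanly. You also do the angular averaging on the linear functional $h\mapsto\dot W(k_h)$, so the first-order coefficient is $c\operatorname{tr}h$ with a universal $c$ before any curvature is computed. This explains why only $H_0$ can appear and reduces the lemma to a single constant. The paper's computation, in exchange, is self-contained and gives explicit pointwise expansions for non-umbilic $\operatorname{II}$.

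The displayed formula for $\dot H$, however, is wrong, and used as written it does not give $-4\pi H_0$. For a fixed hypersurface, $g_t=g+tk$ and $H=\operatorname{div}N$, the correct linearisation is
\[
\dot H=-\tfrac12\bar H\,k(N,N)-\operatorname{div}_S\bigl(k(N,\cdot)^\top\bigr)+\tfrac12\operatorname{tr}_S(\nabla_Nk).
\]
You have $-\langle k,\bar h\rangle+\tfrac12\bar H\,k(N,N)$ where $-\tfrac12\bar H\,k(N,N)$ should stand; your last two terms do equal $\tfrac12\operatorname{tr}_S(\nabla_Nk)$ for the radial extension of $N$. A quick test shows the error: $k=\phi(|z|)\,\dd|z|\otimes\dd|z|$ must give $\dot H=-\phi(1)$ on the unit sphere, but your formula gives $+\phi(1)$. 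For $k=-H_0z_3(\dd z_1^2+\dd z_2^2)$ the error is $H_0\cos\theta(3\cos^2\theta-1)$, which shifts $\dot W$ by $+2\pi H_0$. You would therefore obtain $-2\pi H_0$, and your cross-check would disagree.

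With the correct formula the computation closes:
\[
\dot H=\tfrac{H_0}{2}\cos\theta\,(2-5\sin^2\theta),\qquad \operatorname{tr}_Sk=-H_0\cos\theta\,(1+\cos^2\theta),
\]
where the first is the paper's pointwise expansion with $\operatorname{II}=\frac{H_0}{2}\delta$. Hence $\int_{S^+}(4\dot H+2\operatorname{tr}_Sk)\,\dd\sigma_0=-16\pi H_0\int_0^{\pi/2}\sin^3\theta\cos\theta\,\dd\theta=-4\pi H_0$.

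It is simpler still to drop the general formula. Since you already know $\dot W(k_h)=c\operatorname{tr}h$, let the model determine $c$ rather than serve as a cross-check. Take the ball of radius $R$; the complement corresponds to $R\mapsto-R$. With $\epsilon=r/R$, the rescaled meridian of $S_r^+$ about the axis through $p$ is $(\sin\theta-\epsilon\sin\theta\cos\theta,\,-\cos\theta-\frac{\epsilon}{2}\sin^2\theta)+O(\epsilon^2)$. This gives $rH=2+\epsilon\cos\theta(2-5\sin^2\theta)$ and area density $r^2\sin\theta(1-\epsilon\cos\theta-\epsilon\cos^3\theta)$, so $\int_{S_r^+}H^2\,\dd\sigma=8\pi-8\pi\epsilon+O(\epsilon^2)$. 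With $H_0=2/R$ this yields $c=-4\pi$.
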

	\begin{proof}
		Fix Fermi coordinates $(x^{1},x^{2},y)$ centred at the given point
		$q\in\partial M$.  To avoid confusing the centre with the exponent
		used elsewhere, we write $q$ instead of $p$ throughout this proof.
		The metric satisfies
		\[
		g_{ij}=\delta_{ij}-2\operatorname{II}_{ij}y+O(r^{2}),
		\qquad g_{iy}=0,\qquad g_{yy}=1,
		\]
		with inverse
		\[
		g^{ij}=\delta^{ij}+2\operatorname{II}^{ij}y+O(r^{2}),
		\qquad g^{yy}=1,\qquad g^{iy}=0,
		\]
		where $r^{2}=|x|^{2}+y^{2}$.  Set $F(x,y)=|x|^{2}+y^{2}$ and
		parametrise $S_r^{+}=\{F=r^{2},\ y\ge0\}$ by
		\[
		(x^{1},x^{2},y)
		=r(\sin\theta\cos\varphi,
		\sin\theta\sin\varphi,\cos\theta),
		\]
		where $\theta\in[0,\pi/2]$ and $\varphi\in[0,2\pi]$.  Write
		$\omega=(\cos\varphi,\sin\varphi)$,
		$\omega^{\perp}=(-\sin\varphi,\cos\varphi)$, and use the shorthand
		\[
		\operatorname{II}=\sum_{i,j}\operatorname{II}_{ij}\omega^{i}\omega^{j},
		\qquad
		\operatorname{II}^{\perp}=\sum_{i,j}\operatorname{II}_{ij}
		\omega^{\perp i}\omega^{\perp j},
		\]
		together with $S=\sin^{2}\theta\cos\theta$ and
		$H_{0}=\operatorname{tr}\operatorname{II}$.

		\medskip\noindent\textit{Gradient and Laplacian of $F$.}
		Since
		$\nabla F=\sum_{\alpha,\beta}g^{\alpha\beta}
		(\partial_{\beta}F)\partial_{\alpha}$, we have
		\[
		\nabla F
		=\sum_{i=1}^{2}\Bigl(2x^{i}
		+4\sum_{j,k}\operatorname{II}^{ij}\delta_{jk}x^{k}y
		+O(r^{3})\Bigr)\partial_{i}+2y\,\partial_{y}.
		\]
		Consequently,
		\begin{align*}
		|\nabla F|_g^{2}
		&=\sum_{i,j}g_{ij}(\nabla^{i}F)(\nabla^{j}F)+(\nabla^{y}F)^{2}\\
		&=\sum_{i,j}(\delta_{ij}-2\operatorname{II}_{ij}y)
		\bigl(2x^{i}+4\sum_{k}\operatorname{II}^{ik}x^{k}y\bigr)
		\bigl(2x^{j}+4\sum_{\ell}\operatorname{II}^{j\ell}x^{\ell}y\bigr)
		+4y^{2}+O(r^{4}).
		\end{align*}
		The leading term is $4|x|^{2}$, the metric correction contributes
		$-8\sum_{i,j}\operatorname{II}_{ij}x^{i}x^{j}y$, and the two
		inverse-metric cross-terms contribute
		$16\sum_{i,j}\operatorname{II}_{ij}x^{i}x^{j}y$.  Hence
		\begin{equation*}
		|\nabla F|_g^{2}=4r^{2}+8\operatorname{II}\,r^{3}S+O(r^{4}),
		\end{equation*}
		and therefore
		\begin{equation}\label{eq:ap-grad}
		|\nabla F|_g=2r+2r^{2}S\operatorname{II}+O(r^{3}).
		\end{equation}

		For the Laplacian, the second-derivative term is
		\[
		g^{\alpha\beta}\partial_{\alpha\beta}F
		=2\sum_{i=1}^{2}g^{ii}+2
		=6+4H_0r\cos\theta+O(r^2).
		\]
		Using $\Gamma^{y}_{ij}=\operatorname{II}_{ij}+O(r)$, the only
		first-order Christoffel contribution is
		\[
		-g^{ij}\Gamma^{y}_{ij}\partial_yF
		=-2H_0r\cos\theta+O(r^2).
		\]
		The terms containing tangential derivatives of $F$ are $O(r^2)$
		because $(x^1,x^2)$ are geodesic normal coordinates on
		$\partial M$ at $q$.  Thus
		\begin{equation*}
		\Delta_gF=6+2H_0r\cos\theta+O(r^2).
		\end{equation*}

		\medskip\noindent\textit{Hessian and mean curvature.}
		Equation~\eqref{eq:ap-grad} gives
		$|\nabla F|_g^{-1}=(2r)^{-1}-\frac12S\operatorname{II}+O(r)$.
		The unit normal $\nu=\nabla F/|\nabla F|_g$ therefore satisfies
		\begin{align*}
		\nu^{i}&=\sin\theta\,\omega^{i}
		+2r\sin\theta\cos\theta\sum_j\operatorname{II}^{ij}\omega^j
		-r\sin\theta\,S\operatorname{II}\,\omega^i+O(r^2),\\[2pt]
		\nu^{y}&=\cos\theta-rS\operatorname{II}\cos\theta+O(r^2).
		\end{align*}
		For $\operatorname{Hess}_gF(\nu,\nu)$, the ordinary
		second-derivative part is
		\[
		2\sum_i(\nu^i)^2+2(\nu^y)^2
		=2+4rS\operatorname{II}+O(r^2).
		\]
		Indeed,
		\begin{align*}
		\sum_i(\nu^i)^2
		&=\sin^2\theta+4rS\operatorname{II}
		-2rS\operatorname{II}\sin^2\theta+O(r^2),\\
		(\nu^y)^2
		&=\cos^2\theta-2rS\operatorname{II}\cos^2\theta+O(r^2).
		\end{align*}
		The Christoffel part
		$-\nu^\alpha\nu^\beta\Gamma^\gamma_{\alpha\beta}
		\partial_\gamma F$ receives first-order contributions from the
		$(i,j,y)$, $(i,y,k)$, and $(y,i,k)$ terms.  Their sum is
		$(-2+2+2)rS\operatorname{II}=2rS\operatorname{II}$.  It follows that
		\begin{equation*}
		\operatorname{Hess}_gF(\nu,\nu)
		=2+6rS\operatorname{II}+O(r^2).
		\end{equation*}
		Since
		$H=(\Delta_gF-\operatorname{Hess}_gF(\nu,\nu))/|\nabla F|_g$,
		we obtain
		\begin{align*}
		H&=\bigl(4+2r\cos\theta
		(H_0-3\sin^2\theta\,\operatorname{II})\bigr)
		\bigl(\tfrac1{2r}-\tfrac12S\operatorname{II}\bigr)+O(r)\\
		&=\frac2r+H_0\cos\theta-5S\operatorname{II}+O(r).
		\end{align*}

		\medskip\noindent\textit{Area element and integration.}
		The induced metric on $S_r^+$ has area element
		\begin{equation*}
		\sqrt{\det h}
		=r^2\sin\theta\bigl[1-r\cos^3\theta\,\operatorname{II}
		-r\cos\theta\,\operatorname{II}^{\perp}\bigr]+O(r^4).
		\end{equation*}
		Multiplying the preceding expansions yields
		\begin{align*}
		H^2\sqrt{\det h}
		&=4\sin\theta
		-4r\sin\theta\cos^3\theta\,\operatorname{II}
		-4r\sin\theta\cos\theta\,\operatorname{II}^{\perp}\\
		&\quad+4r\sin\theta H_0\cos\theta
		-20r\sin^3\theta\cos\theta\,\operatorname{II}+O(r^2).
		\end{align*}
		Since
		\[
		\int_0^{2\pi}\operatorname{II}\,\dd\varphi
		=\int_0^{2\pi}\operatorname{II}^{\perp}\,\dd\varphi
		=\pi H_0,
		\]
		angular integration gives
		\[
		\int_0^{2\pi}H^2\sqrt{\det h}\,\dd\varphi
		=8\pi\sin\theta
		-16\pi rH_0\sin^3\theta\cos\theta+O(r^2).
		\]
		Finally,
		\begin{align*}
		\int_{S_r^+}H^2\,\dd\sigma
		&=8\pi\int_0^{\pi/2}\sin\theta\,\dd\theta
		-16\pi rH_0\int_0^{\pi/2}
		\sin^3\theta\cos\theta\,\dd\theta+O(r^2)\\
		&=8\pi-4\pi H_0r+O(r^2),
		\end{align*}
		because the two displayed one-dimensional integrals are $1$ and
		$1/4$, respectively.  This proves~\eqref{eq:Willmore-Fermi}.
	\end{proof}

	The Fermi coordinate spheres above are needed before the boundary
	is known to be totally geodesic.  Once
	$\operatorname{II}\equiv0$, we may instead use genuine geodesic
	hemispheres centred at a boundary point.  The following lemma records
	the two properties that will be used in the rigidity argument.

	\begin{lem}[Free-boundary geodesic hemispheres]
		\label{lem:free-boundary-geodesic-hemisphere}
		Suppose that $\operatorname{II}\equiv0$ on $\partial M$ and
		$\operatorname{Ric}\ge0$.  For every $q\in\partial M$ and all
		sufficiently small $r>0$, there is a smooth half-ball
		$\Omega_r(q)$ centred at $q$ whose interior boundary
		$\Sigma_r(q)$ is a free-boundary geodesic hemisphere and whose
		exterior $M\setminus\Omega_r(q)$ is connected.  Moreover,
		\begin{equation}\label{eq:hemisphere-Willmore-bound}
			\int_{\Sigma_r(q)}H^2\,\dd\mathcal{H}^2\le8\pi.
		\end{equation}
		It also has the second-order expansion
		\begin{equation}\label{eq:hemisphere-Willmore-expansion}
			\int_{\Sigma_r(q)}H^2\,\dd\mathcal{H}^2
			=8\pi-\frac{4\pi}{3}\mathrm{R}(q)r^2+o(r^2).
		\end{equation}
	\end{lem}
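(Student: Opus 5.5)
The plan is to take $\Omega_r(q)=\{x\in M: d(x,q)\le r\}$, the intrinsic metric half-ball, with $\Sigma_r(q)=\{d(\cdot,q)=r\}$. The key observation is that a totally geodesic boundary makes geodesics from $q$ with initial velocity in $T_q\partial M$ remain in $\partial M$. Indeed, the $\partial M$-geodesic with that initial velocity has vanishing normal acceleration because $\operatorname{II}\equiv0$, so it is an $M$-geodesic, and uniqueness identifies the two. Hence for $r$ below the injectivity and focal radius at $q$ (taken in the half-space $\{v\in T_qM:\langle v,\mathbf n\rangle\ge0\}$), the map $\exp_q$ is a diffeomorphism from the closed half-ball of radius $r$ onto $\Omega_r(q)$. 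It carries the flat half of the Euclidean sphere onto $\Sigma_r(q)\cap\partial M$, so $\Sigma_r(q)$ is a smooth hemisphere. Convexity guarantees minimizing geodesics between interior points do not leave $M$, as in Lemma~\ref{lem:regularity}. Connectedness of $M\setminus\Omega_r(q)$ for small $r$ follows because $\Omega_r(q)$ is a small topological half-ball. Any path in $M$ through $\Omega_r(q)$ can be rerouted along $\Sigma_{r}(q)$ or a slightly larger sphere, which is connected (a disc), and $M$ is connected and noncompact.

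For the free-boundary property, take a point $x\in\Sigma_r(q)\cap\partial M$. The radial geodesic from $q$ to $x$ lies in $\partial M$, so $\partial_r=\nabla d(\cdot,q)$ at $x$ is tangent to $\partial M$. Since $\partial_r$ is the unit normal of $\Sigma_r(q)$, it follows that $\Sigma_r(q)$ meets $\partial M$ orthogonally.

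For the bound \eqref{eq:hemisphere-Willmore-bound}, I will use comparison geometry rather than the expansion. The Laplacian comparison under $\operatorname{Ric}\ge0$, valid inside the injectivity region, gives $0<H=\Delta d\le 2/r$ on $\Sigma_r(q)$; positivity holds for small $r$ since $H=2/r+O(r)$. Bishop's volume/area comparison along the radial geodesics of the half-space gives
\[
|\Sigma_r(q)|\le 2\pi r^2 .
\]
Here I apply the Jacobi-field comparison ray by ray and integrate only over the hemisphere of directions. Combining the two estimates,
\[
\int_{\Sigma_r(q)}H^2\le \frac{4}{r^2}\,2\pi r^2=8\pi .
\]

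For the expansion \eqref{eq:hemisphere-Willmore-expansion}, I will work in normal coordinates at $q$. The standard computation behind \cite[Proposition~3.1]{Mondino} writes
\[
H^2\,\dd\sigma=\bigl(4-c_1r^2\operatorname{Ric}_q(\omega,\omega)+\dots\bigr)\dd\omega+O(r^3)\,\dd\omega ,
\]
pointwise in the direction $\omega\in\mathbb S^2$. The $r^2$ coefficient is a quadratic form in $\omega$, built from the curvature tensor at $q$. Integrating over the full sphere yields $16\pi-\tfrac{8\pi}{3}\mathrm R(q)r^2$. Quadratic forms are even in $\omega$, so their integral over the hemisphere $\{\langle\omega,\mathbf n\rangle\ge0\}$ is exactly half the full-sphere integral. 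This gives $8\pi-\tfrac{4\pi}{3}\mathrm R(q)r^2+o(r^2)$. The odd contributions appear only at order $r^3$, through derivatives of curvature, and are absorbed into $o(r^2)$.

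The main obstacle is to check that the pointwise-in-$\omega$ expansion, rather than just its integrated form, is what the geodesic-sphere computation delivers. Equivalently, I must check that the $r^2$ term has no odd part. I would verify this directly from the Jacobi-field expansions of the metric and the second fundamental form of geodesic spheres in normal coordinates; there the $r^2$ terms involve only $R_q(\omega,\cdot,\omega,\cdot)$, which is even.
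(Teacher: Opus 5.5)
Your proposal is correct and matches the paper's proof step by step. The paper uses the same exponential-map half-ball, with the equator mapped into $\partial M$ by total geodesicity, and the same Gauss-lemma orthogonality argument. It proves the bound by the same ray-by-ray comparison $H\le 2/r$ and $J(r,\theta)\le r^2$, which is exactly your Laplacian and Bishop comparison, and it obtains the expansion from the same evenness of the order-$r^2$ term in $H^2J=4-2r^2\operatorname{Ric}_q(\theta,\theta)+o(r^2)$. The only cosmetic difference is that you define $\Omega_r(q)$ as the metric ball, which needs the standard fact that shortest paths from the boundary point $q$ are $M$-geodesics by local convexity; the paper avoids this by defining $\Omega_r(q)$ directly as $\exp_q$ of the closed half-ball.
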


	\begin{proof}
		Let $\mathbf{n}_q$ be the inward unit normal to $\partial M$ at
		$q$ and put
		\[
		\mathbb{S}_q^+
		=\{\theta\in T_qM:|\theta|=1,
		\ \langle\theta,\mathbf{n}_q\rangle\ge0\}.
		\]
		For $r$ below the one-sided normal radius of $q$, define directly
		in $M$
		\[
		\Omega_r(q)
		=\exp_q\{s\theta:0\le s\le r,\ \theta\in\mathbb{S}_q^+\},
		\qquad
		\Sigma_r(q)=\exp_q(r\mathbb{S}_q^+).
		\]
		Because $\partial M$ is totally geodesic, a geodesic issuing from
		$q$ with initial velocity in $T_q\partial M$ remains in
		$\partial M$ for small time.  Hence the equator
		$\partial\mathbb{S}_q^+$ is mapped into $\partial M$, and
		$\Omega_r(q)$ is a smooth half-ball with interior boundary
		$\Sigma_r(q)$.  Its exterior is connected: any path entering this
		coordinate half-ball can be rerouted along a slightly larger
		hemisphere in the same normal coordinate chart.  Along the equator,
		the radial vector
		$\partial_r$ is tangent to $\partial M$.  By the Gauss lemma it is
		normal to $\Sigma_r(q)$; thus the normals of $\Sigma_r(q)$ and
		$\partial M$ are orthogonal, which is precisely the free-boundary
		condition.  This construction uses only the one-sided exponential
		map and does not require a metric double.

		We next prove~\eqref{eq:hemisphere-Willmore-bound}.  Along every
		radial geodesic, the mean curvature of the distance spheres
		satisfies the traced Riccati equation
		\[
		\partial_rH=-|h|^2-\operatorname{Ric}(\partial_r,\partial_r)
		\le-\frac12H^2.
		\]
		Since $rH\to2$ as $r\to0^+$, comparison with $2/r$ gives
		$H\le2/r$.  For $r$ sufficiently small the expansion
		$H=2/r+O(r)$ also gives $H>0$, and hence $H^2\le4/r^2$.
		If $J(r,\theta)$ denotes the radial area density for
		$\theta\in\mathbb{S}_q^+$, then
		\[
		\partial_r\log\frac{J(r,\theta)}{r^2}
		=H-\frac2r\le0,
		\qquad
		\lim_{r\to0^+}\frac{J(r,\theta)}{r^2}=1.
		\]
		It follows that $J(r,\theta)\le r^2$.  Since
		$|\mathbb{S}_q^+|=2\pi$, we obtain directly
		\[
		\int_{\Sigma_r(q)}H^2\,\dd\mathcal{H}^2
		\le\frac4{r^2}\int_{\mathbb{S}_q^+}J(r,\theta)\,\dd\theta
		\le8\pi,
		\]
		which proves~\eqref{eq:hemisphere-Willmore-bound}.

		Finally, the standard Jacobi-field expansions, which only use the
		curvature tensor at $q$, give
		\[
		H(r,\theta)=\frac2r-\frac r3
		\operatorname{Ric}_q(\theta,\theta)+o(r),
		\]
		and
		\[
		J(r,\theta)=r^2\left(1-\frac{r^2}{6}
		\operatorname{Ric}_q(\theta,\theta)+o(r^2)\right).
		\]
		Thus $H^2J=4-2r^2\operatorname{Ric}_q(\theta,\theta)+o(r^2)$.
		The function
		$\theta\mapsto\operatorname{Ric}_q(\theta,\theta)$ is even.
		Consequently every antipodal pair contributes once to the
		hemisphere integral, and
		\[
		\int_{\mathbb{S}_q^+}\operatorname{Ric}_q(\theta,\theta)
		\,\dd\theta=\frac{2\pi}{3}\mathrm{R}(q).
		\]
		Integrating the expansion of $H^2J$ over $\mathbb{S}_q^+$ gives
		\eqref{eq:hemisphere-Willmore-expansion}.
	\end{proof}
	
	\subsection{Completion of the proof}
	
	\begin{proof}[Proof of Theorem~\ref{thm:main}]
		Suppose first that $M$ is orientable.
		Assume, by contradiction, that
		$\operatorname{II}\not\equiv0$ on $\partial M$.
		Then there exists $p\in\partial M$ with $H_0(p)>0$.
		
		By Lemma~\ref{lem:Willmore-Fermi}, for $r$ sufficiently small
		the Fermi coordinate ball $B_r(p)$ satisfies
		$\int_{S_r^{+}}H^{2}<8\pi$, and consequently
		$F_p(0)<2\pi$ by~\eqref{eq:Fp-bound}.
		For such a sufficiently small embedded half-ball,
		$M\setminus\overline{B_r(p)}$ is connected.  Set
		$\Omega=\overline{B_r(p)}$ and let $w_p$ be the solution
		of~\eqref{eq:wp}.  By the Hopf boundary-point lemma, $0$ is a
		regular value of $w_p$, so $F_p$ is well defined at $t=0$.
		
		By Lemma~\ref{lem:asymptotic}, there exist $T_0\ge0$ and
		$C_1>0$ such that $F_p(t)\le C_1 e^{-\frac{2}{3-p}t}$ and
		$G_p(t)\le F_p(t)\le C_1 e^{-\frac{2}{3-p}t}$ for a.e.\
		$t\ge T_0$.
		
		From Lemma~\ref{lem:capacity}, $C_p(\Sigma_t)=e^{t}C_p(\Sigma_0)$.
		By H\"older's inequality, writing
		$|\nabla w_p|^{p-1}
		=|\nabla w_p|^{2p/3}\,|\nabla w_p|^{(p-3)/3}$
		with exponents $r=3/p$ and $s=3/(3-p)$,
		for which $1/r+1/s=1$,
		\begin{align*}
			e^{t}C_p(\Sigma_0)
			&= \frac{1}{4\pi}\int_{\Sigma_t}
			\Bigl(\frac{|\nabla w_p|}{3-p}\Bigr)^{p-1}\dd\mathcal{H}^{2} \\
			&\le \frac{1}{4\pi(3-p)^{p-1}}
			\Bigl(\int_{\Sigma_t}|\nabla w_p|^{2}\dd\mathcal{H}^{2}
			\Bigr)^{\!\frac{p}{3}}
			\Bigl(\int_{\Sigma_t}|\nabla w_p|^{-1}\dd\mathcal{H}^{2}
			\Bigr)^{\!\frac{3-p}{3}}.
		\end{align*}
		By Lemma~\ref{lem:asymptotic} and Lemma~\ref{lem:Fp-Gp-relation},
		$G_p(t)=\int_{\Sigma_t}|\nabla w_p|^{2}/(3-p)^{2}
		\le C_1 e^{-\frac{2}{3-p}t}$ for a.e.\ $t\ge T_0$.
		Hence, for a.e.\ $t\ge T_0$,
		\[
		\int_{\Sigma_t}|\nabla w_p|^{-1}\dd\mathcal{H}^{2}
		\ge C_2\,e^{\frac{9-p}{(3-p)^{2}}t},
		\]
		with $C_2>0$ depending on $C_p(\Sigma_0)$, $p$, and $C_1$.
		
		By the coarea formula,
		\[
		\frac{\dd}{\dd t}\operatorname{Vol}\bigl(
		\{w_p\le t\}\setminus\{|\nabla w_p|=0\}\bigr)
		= \int_{\Sigma_t}|\nabla w_p|^{-1}\dd\mathcal{H}^{2}
		\ge C_2\,e^{\frac{9-p}{(3-p)^{2}}t}
		\]
		for a.e.\ $t\ge T_0$.  Set
		$R_t = \sup\{\,d(x,o) : w_p(x)\le t\,\}$.
		Integrating over $[T_0,T_1]$ for $T_1>T_0$, set
		$C_3=C_2(3-p)^2/(9-p)>0$.  The superquadratic volume growth
		condition then gives
		\[
		C_3\bigl(e^{\frac{9-p}{(3-p)^{2}}T_1}
		- e^{\frac{9-p}{(3-p)^{2}}T_0}\bigr)
		\le \operatorname{Vol}(\{w_p\le T_1\})
		\le \operatorname{Vol}(B_{R_{T_1}}(o))
		\le C_{\mathrm{vol}}\,R_{T_1}^{1+\alpha}.
		\]
		
		For every $x\in\{w_p\le T_1\}$, we have
		$u_p(x)=e^{-w_p(x)/(p-1)}\ge e^{-T_1/(p-1)}$.  On the other hand,
		Proposition~\ref{prop:p-nonparabolic} gives, for $d(x,o)$ sufficiently
		large,
		\[
			u_p(x)\le C\,d(x,o)^{-\frac{\alpha+1-p}{p-1}}.
		\]
		Combining these inequalities and raising to the power $p-1$ yields
		\[
			d(x,o)^{\alpha+1-p}\le C e^{T_1}.
		\]
		After enlarging $C$ to cover the remaining bounded range of
		$d(x,o)$, this estimate holds for every
		$x\in\{w_p\le T_1\}$.  Taking the supremum over this entire sublevel
		set gives $R_{T_1}^{\alpha+1-p}\le C e^{T_1}$, hence
		$R_{T_1}^{1+\alpha}\le C\,e^{\frac{1+\alpha}{1+\alpha-p}T_1}$.
		Therefore
		\[
		C_3\bigl(e^{\frac{9-p}{(3-p)^{2}}T_1}
		- e^{\frac{9-p}{(3-p)^{2}}T_0}\bigr)
		\le C' e^{\frac{1+\alpha}{1+\alpha-p}T_1},
		\]
		for some $C'>0$.  For $T_1$ sufficiently large this forces
		$\frac{9-p}{(3-p)^{2}}\le\frac{1+\alpha}{1+\alpha-p}$, i.e.\
		$\alpha\le\frac{4}{5-p}$ (cross-multiplying yields
		$\alpha\,p(5-p)\le4p$).
		Since this holds for every $p\in(1,2)$,
		taking the infimum over $p$ gives
		$\alpha\le\inf_{p\in(1,2)}4/(5-p)=1$, contradicting
		the superquadratic volume growth hypothesis $\alpha>1$.
		Therefore $\operatorname{II}\equiv0$ on $\partial M$, so
		$\partial M$ is totally geodesic.

		We now prove directly that the interior is Ricci-flat.  Choose
		and fix $p\in(1,2)$ sufficiently close to $1$ that
		\begin{equation}\label{eq:choose-p-rigidity}
			\frac4{5-p}<\alpha.
		\end{equation}
		This is possible because $\alpha>1$.  Fix $q\in\partial M$.
		Lemma~\ref{lem:free-boundary-geodesic-hemisphere} provides
		$r_0(q)>0$ such that, for every $r\in(0,r_0(q))$, the half-ball
		$\Omega_r(q)$ and its interior boundary $\Sigma_r(q)$ have all the
		properties stated there.  For each such $r$, let
		$w_{p,r}$ be the corresponding potential and denote its monotone
		quantities by $F_{p,r}$ and $G_{p,r}$.  From
		\eqref{eq:Fp-bound} and
		\eqref{eq:hemisphere-Willmore-bound},
		\begin{equation}\label{eq:Fpr-at-most-2pi}
			F_{p,r}(0)\le\frac14
			\int_{\Sigma_r(q)}H^2\,\dd\mathcal{H}^2\le2\pi.
		\end{equation}
		If $F_{p,r}(0)<2\pi$, the volume-growth argument above, applied
		with this fixed $p$, would give
		$\alpha\le4/(5-p)$, contrary to
		\eqref{eq:choose-p-rigidity}.  Consequently equality holds
		throughout~\eqref{eq:Fpr-at-most-2pi}.  In particular,
		\begin{equation*}
			\int_{\Sigma_r(q)}H^2\,\dd\mathcal{H}^2=8\pi,
			\qquad
			\int_{\Sigma_r(q)}
			\left(\frac H2-\frac{|\nabla w_{p,r}|}{3-p}\right)^2
			\dd\mathcal{H}^2=0,
		\end{equation*}
		where we used the exact identity
		\[
		F_{p,r}(0)=\frac14\int_{\Sigma_r(q)}H^2\,\dd\mathcal{H}^2
		-\int_{\Sigma_r(q)}
		\left(\frac H2-\frac{|\nabla w_{p,r}|}{3-p}\right)^2
		\dd\mathcal{H}^2.
		\]
		Endpoint continuity, Lemma~\ref{lem:Fp-endpoint}, gives
		\begin{equation*}
			F_{p,r}(0+)=F_{p,r}(0)=2\pi.
		\end{equation*}
		Nor can $F_{p,r}$ decrease at a later time: if
		$F_{p,r}(t_*)<2\pi$ for some $t_*>0$, the same shifted argument
		would produce the same contradiction.  Since $F_{p,r}$ is
		nonincreasing and locally absolutely continuous on
		$(0,+\infty)$, we conclude that
		\begin{equation*}
			F_{p,r}(t)\equiv2\pi
			\qquad\text{on }[0,+\infty),
		\end{equation*}
		and therefore
		\begin{equation}\label{eq:Fprime-zero-rigidity}
			F_{p,r}'(t)=0
			\qquad\text{for a.e. }t>0.
		\end{equation}

		It remains to extract the equality information without assuming a
		smooth flow from the initial Dirichlet--Neumann edge.  Put
		\[
			U=M\setminus\Omega_r(q),\qquad
			w=w_{p,r},
			\qquad \mathcal{R}=\{x\in U:|\nabla w|(x)>0\}.
		\]
		The set $U$ is connected by the choice of $\Omega_r(q)$, while
		$\mathcal{R}$ is relatively open because $|\nabla w|$ is continuous.  Moreover,
		item~\ref{item:initial-edge} of
		Lemma~\ref{lem:eps-regularization}, together with the Hopf lemma,
		shows that $|\nabla w|$ is bounded away from zero in a one-sided collar of the
		initial face.  In particular, $\mathcal{R}$ is nonempty.

		We first prove that every dissipation density vanishes throughout
		$\mathcal{R}$.  On this set the equation is uniformly elliptic, so $w$
		is smooth in the interior and has the standard smooth Neumann
		regularity up to the physical boundary away from the initial edge.
		Suppose that one of the
		nonnegative continuous densities in
		\eqref{eq:Fp-derivative} were positive at some interior point of
		$\mathcal{R}$.  In a relatively compact submersion chart around that
		point, continuity would give a smaller coordinate cylinder on which
		this density is bounded below by a positive constant.  The local
		level patches in this cylinder have positive $\mathcal{H}^2$-measure
		for every level in a nonempty open interval.  Since
		$\operatorname{II}=0$, the boundary dissipation vanishes, and
		\eqref{eq:Fp-derivative} would imply
		$F_{p,r}'(t)<0$ for almost every $t$ in that interval.  This
		contradicts~\eqref{eq:Fprime-zero-rigidity}.  The same identities at a
		point of the physical boundary follow by continuity from interior
		points of the same submersion neighborhood.  Consequently, throughout
		$\mathcal{R}$,
		\begin{equation}\label{eq:equality-system-regular-set}
			\nabla^\top|\nabla w|=0,
			\qquad \mathring{h}=0,
			\qquad 2|\nabla w|=(3-p)H,
			\qquad \operatorname{Ric}(\nu,\nu)=0,
		\end{equation}
		where $\nu=\nabla w/|\nabla w|$.

		We next show that no critical point can lie outside the initial
		half-ball.  On $\mathcal{R}$, the equation
		$\Delta_p w=|\nabla w|^p$ gives the level-set identity
		\[
			H=|\nabla w|-(p-1)\frac{\partial_\nu|\nabla w|}{|\nabla w|}.
		\]
		Combining this with $H=2|\nabla w|/(3-p)$ from
		\eqref{eq:equality-system-regular-set}, we obtain
		\[
			(p-1)\frac{\partial_\nu|\nabla w|}{|\nabla w|}
			=|\nabla w|-\frac{2|\nabla w|}{3-p}
			=-\frac{p-1}{3-p}|\nabla w|,
			\qquad
			\partial_\nu|\nabla w|=-\frac{|\nabla w|^2}{3-p}.
		\]
		Since $\nabla^\top|\nabla w|=0$ and $\nabla w=|\nabla w|\nu$, it follows that
		\begin{equation}\label{eq:regular-set-gradient-law}
			\nabla|\nabla w|=-\frac{|\nabla w|}{3-p}\nabla w,
			\qquad
			\nabla\!\left(|\nabla w|e^{w/(3-p)}\right)=0
			\quad\text{on }\mathcal{R}.
		\end{equation}

		Let $V$ be a connected component of $\mathcal{R}$.  The second identity
		in~\eqref{eq:regular-set-gradient-law} shows that
		\[
			|\nabla w|e^{w/(3-p)}=C_V
			\qquad\text{on }V
		\]
		for some $C_V>0$.  We claim that $V$ is closed in $U$.  Indeed, if
		$x_j\in V$ and $x_j\to x\in U$, continuity of $|\nabla w|$ and $w$ gives
		\[
			|\nabla w|(x)e^{w(x)/(3-p)}=C_V>0.
		\]
		Thus $x\in\mathcal{R}$.  A connected relative coordinate neighborhood
		of $x$ contained in $\mathcal{R}$ intersects $V$ for all sufficiently
		large $j$, and hence lies in the same connected component; therefore
		$x\in V$.  Thus $V$ is closed in $U$.  Since manifolds with boundary
		are locally connected, every connected component of the relatively
		open set $\mathcal{R}$ is also open in $U$.  The connectedness of $U$
		and the nonemptiness of $\mathcal{R}$ now imply that $V=U$.  Hence
		\begin{equation*}
			\operatorname{Crit}(w_{p,r})\cap
			(M\setminus\Omega_r(q))=\varnothing.
		\end{equation*}

		Finally, the last identity in
		\eqref{eq:equality-system-regular-set} and the Ricci pinching give at
		every point of $U$
		\[
			0=\operatorname{Ric}(\nu,\nu)
			\ge\rho\mathrm{R}\ge0.
		\]
		Therefore $\mathrm{R}=0$ on $U$.  Since
		$\operatorname{Ric}\ge0$ and its trace is zero, all of its eigenvalues
		vanish.  Consequently,
		\begin{equation}\label{eq:exterior-Ricci-flat}
			\operatorname{Ric}\equiv0
			\qquad\text{on }M\setminus\Omega_r(q).
		\end{equation}
		In dimension three this is equivalent to the vanishing of the full
		Riemann curvature tensor.

		We now globalize~\eqref{eq:exterior-Ricci-flat} by shrinking the
		initial hemisphere about the fixed point $q$.  The preceding argument
		applies separately to every $r\in(0,r_0(q))$; although the potential
		$w_{p,r}$ depends on $r$, it gives
		\[
			\operatorname{Ric}\equiv0
			\qquad\text{on }M\setminus\Omega_r(q)
		\]
		for every such $r$.

		Let $x\in M\setminus\{q\}$.  Since $d(x,q)>0$, choose
		\[
			0<r<\min\{r_0(q),d(x,q)\}.
		\]
		Then $x\notin\Omega_r(q)$, and hence $\operatorname{Ric}_x=0$.
		Thus $\operatorname{Ric}\equiv0$ on $M\setminus\{q\}$.  Since the
		Ricci tensor is continuous up to the boundary, letting $x\to q$ gives
		$\operatorname{Ric}_q=0$.  Consequently,
		\begin{equation}\label{eq:global-flatness}
			\operatorname{Ric}\equiv0
			\qquad\text{on }M.
		\end{equation}
		In dimension three the Riemann tensor is determined by the Ricci
		tensor, so~\eqref{eq:global-flatness} implies that $(M,g)$ is flat.

		It remains only to identify the resulting complete flat manifold
		with boundary.  Let
		\[
			(\widehat{M},\widehat{g})
			=M_+\cup_{\partial M}M_-
		\]
		be the boundary double of $M$.  Since $(M,g)$ is flat and
		$\partial M$ is totally geodesic, the doubled metric $\widehat{g}$ is
		smooth and flat.  Moreover, the double of a complete manifold along
		its boundary is complete.

		Fix $o\in\partial M$.  Changing the base point in the
		superquadratic volume-growth assumption only changes its constants.
		Folding the double onto either copy gives
		\[
			d_{\widehat{M}}(o,x_\pm)=d_M(o,x)
			\qquad\text{for every }x\in M,
		\]
		where $x_\pm$ denotes either lift of $x$ to the double.  Hence
		\[
			\operatorname{Vol}_{\widehat{g}}
			\bigl(B_{\widehat{M}}(o,R)\bigr)
			=2\operatorname{Vol}_{g}\bigl(B_M(o,R)\bigr)
			\asymp R^{1+\alpha},
			\qquad \alpha>1.
		\]
		Thus $(\widehat{M},\widehat{g})$ also has superquadratic volume growth.
		As observed in~\cite[Proof of Theorem~1.3]{BPO}, Euclidean space is
		the only complete flat Riemannian $3$-manifold with superquadratic
		volume growth.  Therefore
		\[
			(\widehat{M},\widehat{g})\cong(\mathbb{R}^3,g_{\mathrm{Euc}}).
		\]

		Let $\iota$ be the isometric involution of $\widehat{M}$ that
		exchanges $M_+$ and $M_-$.  Its fixed-point set is precisely the
		seam $\partial M$.  Under the above Euclidean identification,
		$\iota$ becomes a nonidentity order-two Euclidean isometry whose
		fixed-point set is two-dimensional.  The fixed-point set of a
		Euclidean isometry is an affine subspace; hence this isometry is the
		reflection across an affine plane $P$.  Since $M$ is connected, its
		interior is connected; hence the interiors of $M_+$ and $M_-$ are
		the two components of $\widehat{M}\setminus\partial M$.  They are
		consequently identified with the two components of
		$\mathbb{R}^3\setminus P$.  In particular,
		\[
			(M,\partial M,g)\cong
			(\mathbb{R}^3_+,\partial\mathbb{R}^3_+,g_{\mathrm{Euc}}).
		\]
		
		This proves the theorem for $M$ orientable.  If $M$ is
		nonorientable, its orientable double cover inherits all the
		assumptions and must be $\mathbb{R}^3_+$.  Its nontrivial deck
		transformation would be a fixed-point-free order-two Euclidean
		isometry preserving the half-space.  This is impossible: for any
		$x\in\mathbb{R}^3_+$, the Euclidean midpoint of $x$ and its image is
		still in the convex half-space and is fixed by the involution.
	\end{proof}
	
	\appendix
	\section{Proof of Lemma~\ref{lem:eps-regularization}}
	\label{sec:eps-appendix}
	
	We give the free-boundary part of the proof of
	Lemma~\ref{lem:eps-regularization} in some detail; the interior part
	is standard and is only quoted.  Throughout the appendix
	$u:=u_p^\varepsilon=e^{-w_p^\varepsilon/(p-1)}$,
	$a(\sigma):=(\sigma^2+\varepsilon^2)^{(p-2)/2}$ and
	$A_\varepsilon(\xi):=a(|\xi|)\xi$, so that $u$ solves
	$\Delta_p^\varepsilon u=\operatorname{div}A_\varepsilon(\nabla u)=0$
	with $\partial_y u=0$ on $\partial M$ (in Fermi coordinates, where
	$\partial_y$ is the unit normal).  We denote by $K$ any compact subset
	of $\overline{D\setminus\Omega}
	\setminus\partial_{\operatorname{int}}D$, possibly meeting
	$\partial M$, and by $C(K)$ a constant depending on $K$ but not on
	$\varepsilon$.  Until the subsection devoted to the initial edge
	$E=\partial_{\operatorname{int}}\Omega\cap\partial M$, the set $K$ is
	also understood to stay a positive distance from $E$.
	
	\subsection{Interior and physical-boundary estimates}
	
	For fixed $\varepsilon>0$, $u$ is smooth in the interior and up to the
	initial Dirichlet face or the physical Neumann face away from their
	intersection by classical elliptic regularity.  No assertion is made
	here about the artificial outer Dirichlet face, which is disjoint from
	$K$.  The interior $C^{1,\alpha}$ estimate is
	\cite{DiBenedetto83,Tolksdorf84}, and the corresponding Dirichlet and
	conormal estimates are \cite[Theorems~1 and~2]{Lieberman88}.  Their
	estimates are stable as $\varepsilon\downarrow0$.  Together with the
	energy compactness and strict monotonicity argument described in the
	overview, they give the asserted local $C^1$ convergence.  The initial
	Dirichlet--Neumann edge is treated in the next subsection.

	We now obtain the $W^{1,2}$ estimate for the gradient magnitude without
	a boundary difference-quotient calculation.  In the remainder of this
	subsection, $K$ is a compact subset of the relative open set
	$D\setminus\Omega$; it may meet $\partial M$, but it stays away from
	both Dirichlet faces.  By
	\cite[Corollaries~2.5 and~2.6]{Han20}, the convex manifold
	$(M,d,\operatorname{vol}_g)$ is an $\operatorname{RCD}(0,3)$ space.
	Set
	\[
		\Psi_\varepsilon(s):=(s^2+\varepsilon^2)^{(p-2)/2},
		\qquad
		h_\varepsilon(s):=s\Psi_\varepsilon(s),
		\qquad 0<\varepsilon\le1.
	\]
	For $s>0$ we have
	\[
		\frac{s\Psi_\varepsilon'(s)}{\Psi_\varepsilon(s)}
		=(p-2)\frac{s^2}{s^2+\varepsilon^2}\in[p-2,0],
	\]
	and, because $p-2<0$, for every $s\ge1$,
	\[
		2^{(p-2)/2}s^{p-2}
		\le \Psi_\varepsilon(s)\le s^{p-2}.
	\]
	Thus the ellipticity and $p$-growth constants in
	Schulz--Violo~\cite[Theorem~1.4]{SV25} can be chosen independently of
	$\varepsilon\in(0,1]$.

	The homogeneous conormal condition on $\partial M$ implies that the
	weak equation for $u$ holds against every compactly supported test
	function in the relative open set $D\setminus\Omega$.  Equivalently,
	$u$ is a local weak solution on the metric-measure space $M$ also on
	balls meeting the physical boundary.  The regularized energy bound is
	uniform in $\varepsilon$, and
	\[
		h_\varepsilon(|\nabla u|)
		=(|\nabla u|^2+\varepsilon^2)^{(p-2)/2}|\nabla u|
		\le |\nabla u|^{p-1}.
	\]
	Consequently the averaged flux term on the right-hand sides of
	\cite[Theorem~1.4(i)--(ii)]{SV25} is locally bounded uniformly in
	$\varepsilon$.  Theorem~1.4 therefore gives, after covering $K$ by
	finitely many intrinsic balls,
	\begin{equation}\label{eq:SV-flux-gradient}
		\|h_\varepsilon(|\nabla u|)\|_{W^{1,2}(K)}\le C(K),
		\qquad
		\|\nabla u\|_{L^\infty(K)}\le M_K,
	\end{equation}
	where $C(K)$ and $M_K$ do not depend on $\varepsilon$.

	It remains to pass from the flux magnitude to the gradient magnitude.
	A direct differentiation gives
	\[
		h_\varepsilon'(s)
		=(s^2+\varepsilon^2)^{(p-4)/2}
		  \bigl(\varepsilon^2+(p-1)s^2\bigr)
		\ge(p-1)(s^2+\varepsilon^2)^{(p-2)/2}.
	\]
	For $0\le s\le M_K$ and $0<\varepsilon\le1$, the last expression is
	bounded below by
	\[
		(p-1)(M_K^2+1)^{(p-2)/2}>0.
	\]
	Hence $h_\varepsilon^{-1}$ is Lipschitz on
	$[0,h_\varepsilon(M_K)]$, with a Lipschitz constant independent of
	$\varepsilon$.  The Sobolev chain rule and
	\eqref{eq:SV-flux-gradient} now give
	\begin{equation}\label{eq:SV-gradient-magnitude}
		\bigl\||\nabla u|\bigr\|_{W^{1,2}(K)}\le C(K)
	\end{equation}
	uniformly for $0<\varepsilon\le1$.  This is the direct implication of
	the conclusion
	$\Psi(|\nabla u|)|\nabla u|\in W^{1,2}_{\mathrm{loc}}$ in
	Schulz--Violo's theorem that is used here.
	
	\subsection{The initial Dirichlet--Neumann edge}

	Let $E=\partial_{\operatorname{int}}\Omega\cap\partial M$ and fix
	$z\in E$.  In Fermi coordinates $(x,y)$ centred at $z$, with
	$y\ge0$ and $\partial M=\{y=0\}$, extend the metric and both the
	regularized and exact potentials evenly by
	\[
		\widetilde{g}(x,y)=g(x,|y|),
		\qquad
		\widetilde{u}_p^\varepsilon(x,y)=u_p^\varepsilon(x,|y|),
		\qquad
		\widetilde{u}_p(x,y)=u_p(x,|y|).
	\]
	Here the mixed components of $g$ vanish in Fermi coordinates.  The
	reflected metric is uniformly elliptic and Lipschitz, and the weak
	formulation of the equation, together with the vanishing weak
	conormal flux on $\{y=0\}$, shows that
	$\widetilde{u}_p^\varepsilon$ and $\widetilde{u}_p$ solve their respective
	weak equations across the reflecting face.  This is only a local weak
	reflection of the equations; no smooth metric double is being used.

	The orthogonality of the two boundary faces removes the corner after
	reflection.  Indeed, if the initial face is locally written as
	$x^1=f(x^2,y)$, then orthogonality gives
	$\partial_yf(x^2,0)=0$, so $f(x^2,|y|)$ is a $C^2$ defining graph for
	the reflected initial face.  We have therefore reduced the mixed
	problem near $z$ to a Dirichlet problem on a regular boundary.  In
	local coordinates the reflected regularized flux
	$\widetilde{A}_\varepsilon(x,\xi)$ satisfies, with constants independent
	of $\varepsilon\in(0,1]$,
	\begin{align*}
		c_p(|\xi|^2+\varepsilon^2)^{(p-2)/2}|\eta|^2
		&\le
		\langle D_\xi\widetilde{A}_\varepsilon(x,\xi)\eta,\eta\rangle
		\le C_p(|\xi|^2+\varepsilon^2)^{(p-2)/2}|\eta|^2,\\
		|\widetilde{A}_\varepsilon(x,\xi)
		-\widetilde{A}_\varepsilon(x',\xi)|
		&\le C_p|x-x'|(\varepsilon+|\xi|)^{p-1}.
	\end{align*}
	The constants also absorb the local ellipticity and Lipschitz bounds
	of $\widetilde{g}$.  Since
	$(|\xi|^2+\varepsilon^2)^{(p-2)/2}$ is comparable to
	$(\varepsilon+|\xi|)^{p-2}$, these are the structure conditions of
	\cite[Theorem~1]{Lieberman88} with $m=p-2\in(-1,0)$,
	$K=\varepsilon$, and spatial H\"older exponent $1$.  The exponent and
	estimate in that theorem are independent of $K$, so they are common
	to $\varepsilon\in(0,1]$.  A finite covering of the compact initial
	face gives
	a neighborhood $U_0$ such that
	\begin{equation}\label{eq:edge-uniform-C1}
		\sup_{0<\varepsilon\le1}
		\|u_p^\varepsilon\|_{C^{1,\beta}
		(\overline{U_0\setminus\Omega})}<\infty.
	\end{equation}
	The regularized energies give a uniform $W^{1,p}$ bound.  Strict
	monotonicity of the flux identifies every subsequential weak limit
	with the unique weak $p$-harmonic potential $u_p$ having the same
	mixed boundary data.  Hence \eqref{eq:edge-uniform-C1} and
	Arzel\`a--Ascoli upgrade the convergence of the whole family to
	$u_p^\varepsilon\to u_p$ in
	$C^1(\overline{U_0\setminus\Omega})$.  In particular
	$u_p\in C^{1,\beta}$ up to $E$.

	Since $u_p=1$ on $\partial_{\operatorname{int}}\Omega$ and
	$0<u_p<1$ in the exterior, the Hopf boundary-point lemma applied after
	reflection yields $|\nabla u_p|>0$ on the entire initial face,
	including $E$.  Compactness and the $C^1$ regularity then give a
	neighborhood $U$ of the initial face and $c_0>0$ such that
	$|\nabla u_p|\ge c_0$ on $U\setminus\Omega$.  The $C^1$ convergence
	above permits us to shrink $U$ so that
	$|\nabla u_p^\varepsilon|\ge c_0/2$ there for every sufficiently small
	$\varepsilon$.  The exact and regularized equations are therefore
	uniformly elliptic in this collar: their linearized matrices have
	common positive lower and upper ellipticity bounds, while the reflected
	coefficients are Lipschitz in the spatial variables.  Flattening the
	$C^2$ reflected Dirichlet boundary and applying the boundary
	difference-quotient estimate (the Dirichlet datum is constant) gives
	\begin{equation}\label{eq:edge-uniform-W22}
		\sup_{0<\varepsilon\le\varepsilon_0}
		\|u_p^\varepsilon\|_{W^{2,2}(U\setminus\Omega)}<\infty,
		\qquad u_p\in W^{2,2}(U\setminus\Omega);
	\end{equation}
	equivalently, one controls the
	tangential second derivatives first and uses the equation for the
	normal--normal derivative.  Because $u_p$ is bounded away from zero in
	$U$, the logarithmic change of variables transfers
	\eqref{eq:edge-uniform-C1}--\eqref{eq:edge-uniform-W22} to
	$w_p^\varepsilon$ and $w_p$.  In particular,
	\[
		w_p\in C^{1,\beta}\bigl(\overline{U\setminus\Omega}\bigr)
		\cap W^{2,2}(U\setminus\Omega),
		\qquad |\nabla w_p|\ge c
	\]
	for a possibly smaller $U$ and some $c>0$, while
	$|\nabla w_p^\varepsilon|\ge c/2$ and the stated uniform $W^{2,2}$
	bound hold for small $\varepsilon$.  This proves the edge assertions
	in items~\ref{item:C1} and~\ref{item:initial-edge}.
	
	\subsection{Conclusion}
	
	The comparison principle gives $u_p^\varepsilon\ge c_K>0$ on every
	compact $K\subset D\setminus\Omega$, uniformly in $\varepsilon$.  Since
	\[
		|\nabla w_p^\varepsilon|
		=(p-1)\frac{|\nabla u_p^\varepsilon|}{u_p^\varepsilon},
	\]
	the Sobolev product and chain rules, together with
	\eqref{eq:SV-flux-gradient}--\eqref{eq:SV-gradient-magnitude}, show that
	the family $\{|\nabla w_p^\varepsilon|\}$ is bounded in
	$W^{1,2}(K)$.  Every weakly convergent subsequence has
	$L^2$-limit $|\nabla w_p|$ by the already established
	$C^1_{\mathrm{loc}}$ convergence.  Thus the weak limit is unique and
	the whole family converges weakly to $|\nabla w_p|$ in
	$W^{1,2}_{\mathrm{loc}}$.  This proves item~\ref{item:W12} and the
	$C^{1,\beta}_{\mathrm{loc}}$ assertion in item~\ref{item:reg} of
	Lemma~\ref{lem:eps-regularization}.  Item~\ref{item:levels} follows
	from the Neumann condition as in the overview, and smoothness away from
	$\operatorname{Crit}(w_p)$ follows from uniform ellipticity there.

	\section{\texorpdfstring{$p$}{p}-nonparabolicity and decay of the capacity potential}
\label{sec:pdepart-appendix}

This appendix proves Proposition~\ref{prop:p-nonparabolic}.  The argument
uses only the doubling and Poincar\'e properties of a manifold with
nonnegative Ricci curvature and convex boundary, local Harnack estimates for
the natural Neumann problem, and a capacity estimate for balls centred at
arbitrary distant points.  In particular, the exhaustion domains introduced
below are not required to meet the physical boundary orthogonally.

Let $1<p<\infty$ and let $(M,\partial M,g)$ be a complete noncompact
Riemannian $n$-manifold with $\operatorname{Ric}\ge0$ and convex boundary.
We regard $M$ as a metric-measure space with its intrinsic distance and
Riemannian measure.  The normal $\mathbf{n}$ is the inward unit normal of
$\partial M$.

\subsection*{Analytic input at the physical boundary}

By Han's characterization~\cite[Corollaries~2.5 and~2.6]{Han20},
$(M,d,\mathcal{H}^n)$ is an $\operatorname{RCD}(0,n)$ space.  Bishop--Gromov
comparison and the weak $(1,1)$-Poincar\'e inequality on such spaces give,
for every metric ball and every $s>1$,
\begin{equation}\label{ap:doubling-poincare}
 |B(x,2r)|\le2^n|B(x,r)|,
 \qquad
 \fint_{B(x,r)}|f-f_{B(x,r)}|^s
 \le C_s r^s\fint_{B(x,\lambda_P r)}|\nabla f|^s,
\end{equation}
where $C_s=C(n,s)$ and $\lambda_P\ge1$ are fixed; see
\cite[Theorem~1.2]{Rajala12} for the weak $(1,1)$ estimate and~\cite{HK}
for its $L^s$ consequence.  These are estimates on the metric-measure space
$M$ itself and therefore include balls meeting $\partial M$.

\begin{lem}[Neumann Harnack inequality]\label{ap:local-estimates}
 There is a structural constant $\Lambda_H\ge1$ with the following
 property.  Let $B=B(x,r)$ and let $h\ge0$ be weakly $p$-harmonic in the
 relative ball $B(x,\Lambda_H r)\subset M$, in the sense that
 \begin{equation}\label{ap:local-weak-equation}
  \int_{B(x,\Lambda_H r)}
  |\nabla h|^{p-2}\langle\nabla h,\nabla\varphi\rangle
  \,\dd\mathcal{H}^n=0
 \end{equation}
 for every compactly supported $\varphi\in W^{1,p}$ on that relative ball.
 Then $h$ has a locally H\"older-continuous representative and
 \begin{equation*}
  \sup_{B(x,r)}h\le C_H\inf_{B(x,r)}h,
 \end{equation*}
 where $C_H=C_H(n,p)$.
\end{lem}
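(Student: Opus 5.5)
The plan is to deduce the Neumann Harnack inequality from the De Giorgi--Moser theory on metric-measure spaces. On a doubling space supporting a weak Poincar\'e inequality, that theory gives H\"older continuity and Harnack's inequality for quasiminimizers of the $p$-energy. The structural input is exactly \eqref{ap:doubling-poincare}: $(M,d,\mathcal{H}^n)$ is $\operatorname{RCD}(0,n)$ by \cite{Han20}, hence it is doubling with constant $2^n$ and supports a weak $(1,p)$-Poincar\'e inequality with dilation $\lambda_P$. These estimates hold on every metric ball, including balls meeting $\partial M$.

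The first step is to interpret \eqref{ap:local-weak-equation} intrinsically. The test functions $\varphi$ are compactly supported in the relative ball, but they are not required to vanish on $\partial M$. Hence $h$ is a genuine local minimizer of $\int|\nabla h|^p$ among competitors $h+\varphi$ on $M$. Indeed, for $p$-energy, the Euler--Lagrange equation tested against all such $\varphi$ is equivalent to minimality, by convexity. The physical boundary therefore disappears: the Neumann condition is simply the natural boundary condition of the energy on the metric space $M$. One also checks that the Riemannian $W^{1,p}$ on the relative ball agrees with the Newtonian/Cheeger space $N^{1,p}$ on $(M,d,\mathcal{H}^n)$, with the same minimal upper gradient $|\nabla h|$. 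This holds because $M$ is a smooth manifold with boundary, locally bi-Lipschitz to a half-space.

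The second step is to invoke Kinnunen--Shanmugalingam, \emph{Regularity of quasi-minimizers on metric spaces}. For a minimizer $h\ge0$ on $B(x,\Lambda_H r)$ in a doubling space with a weak $(1,p)$-Poincar\'e inequality, their result gives a locally H\"older representative and the estimate $\sup_{B(x,r)}h\le C\inf_{B(x,r)}h$. Both the constant and the enlargement factor depend only on the doubling constant, the Poincar\'e constants $C_p,\lambda_P$, and $p$, hence only on $(n,p)$. Concretely, one runs Caccioppoli estimates for $(h-k)_\pm$ using cutoffs in balls. De Giorgi iteration, or Moser iteration on $h^{\pm\gamma}$ via a Sobolev inequality derived from doubling plus Poincar\'e, gives $\sup$ and weak-$\inf$ bounds. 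A John--Nirenberg/crossover lemma for $\log h$ (built from Caccioppoli for $\log(h+\delta)$ and Poincar\'e) connects them. We then take $\Lambda_H$ to be the product of the dilation factors arising from $\lambda_P$ and from the covering steps. For strictly positive bounds with $h\ge0$, apply the estimate to $h+\delta$ and let $\delta\downarrow0$.

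The main obstacle is the identification step, not the iteration. We must make sure that Caccioppoli cutoffs $\eta^p(h-k)_\pm$ are admissible test functions, meaning they are compactly supported in the relative ball but need not vanish on $\partial M$. We must also confirm that the Poincar\'e inequality is used on intrinsic balls of $M$ rather than on coordinate half-balls. Both follow from working entirely on $(M,d,\mathcal{H}^n)$. Should the metric-space citation be unwanted, a fallback is local: in Fermi coordinates, reflect $h$ evenly across $\partial M$, as in the edge argument of Appendix~\ref{sec:eps-appendix}. This produces a weak solution of a uniformly elliptic $p$-Laplace type equation with bounded measurable coefficients, to which Serrin's classical Harnack inequality applies. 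That route, however, gives constants depending on the local geometry rather than on $(n,p)$. So the RCD route is preferred, since uniformity in $x$ and $r$ is what the later capacity argument needs.
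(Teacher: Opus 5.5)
Your proposal is correct and follows essentially the same route as the paper. You read \eqref{ap:local-weak-equation} as local $p$-energy minimality on the metric-measure space $M$, with the Neumann condition appearing as the natural boundary condition. You then invoke the Kinnunen--Shanmugalingam regularity and Harnack theory on the doubling--Poincar\'e space supplied by \eqref{ap:doubling-poincare}, with $\Lambda_H$ absorbing the fixed ball enlargement.

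One small point the paper makes explicit is that it uses a weak $(1,q)$-Poincar\'e inequality for some $q\in(1,p)$, which is the hypothesis in that theory; this is available because \eqref{ap:doubling-poincare} holds for every exponent $s>1$. Conversely, your identification of the Riemannian $W^{1,p}$ with the Newtonian space is a detail the paper leaves implicit.
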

\begin{proof}
 Equation~\eqref{ap:local-weak-equation} says precisely that $h$ is a local
 minimizer of the $p$-energy on the relative metric ball.  If the ball meets
 $\partial M$, its test functions need not vanish on the physical boundary;
 this is the variational formulation of the homogeneous Neumann condition.
 Choose any $q\in(1,p)$.  The doubling property and the weak
 $(1,q)$-Poincar\'e estimate supplied by~\eqref{ap:doubling-poincare}
 place the problem in the standard doubling--Poincar\'e theory for
 $p$-energy minimizers.  Local H\"older continuity and the Harnack
 inequality follow from the regularity theory in~\cite{KS01}; enlarging
 the ball by its fixed factor is recorded in $\Lambda_H$.  No reflection
 of the metric, and no
 regularity of an auxiliary outer corner, is involved.
\end{proof}

\subsection*{Global capacity of distant balls}

For the two kinds of sets used below---smooth bounded obstacles and closed
metric balls---we use the following relaxed unnormalized capacity:
\begin{equation}\label{ap:global-capacity}
 \operatorname{cap}_p(K;M)
 =\inf\left\{\int_M|\nabla f|^p\,\dd\mathcal{H}^n:
 \begin{array}{l}
 f\in W^{1,p}(M),\ \operatorname{supp}f\text{ compact},\\
 f\ge1\text{ almost everywhere on }K
 \end{array}\right\}.
\end{equation}
Here $K$ has nonempty interior, and no trace is imposed on the physical
boundary.  We do not use this almost-everywhere convention for
lower-dimensional compact sets.  For a smooth compact
obstacle, the standard Sobolev approximation and truncation theorem shows
that the same infimum is obtained from smooth compactly
supported functions that are at least $1$ on the obstacle.  Conversely, if
such a smooth function $\psi$ is at least $1$ on the obstacle, then, for
$\varepsilon>0$, truncating
$\psi/(1-\varepsilon)$ at the levels $0$ and $1$ produces a Sobolev
competitor equal to $1$ on a neighborhood of the obstacle, with energy at
most $(1-\varepsilon)^{-p}$ times the exterior energy of $\psi$.  Letting
$\varepsilon\downarrow0$ proves the equivalence with the convention in
Section~2.  Since the truncated function is constant on the obstacle, its
energy is supported in the exterior.  Consequently, when $1<p<3$, for
every smooth obstacle $\Omega$ considered in Section~2,
\begin{equation}\label{ap:capacity-normalization}
 C_p(\partial_{\operatorname{int}}\Omega)
 =\frac1{4\pi}\left(\frac{p-1}{3-p}\right)^{p-1}
 \operatorname{cap}_p(\Omega;M).
\end{equation}

\begin{lem}[Capacity of distant balls]\label{ap:ball-capacity}
 Suppose that $|B(o,t)|\ge c_0t^\beta$ for $t\ge t_0$, where $\beta>p$.
 Fix $\kappa\in(0,1)$.  There are constants $c_1>0$ and $R_1$ such that,
 whenever $R=d(o,x)\ge R_1$ and $r=\kappa R$,
 \begin{equation}\label{ap:ball-capacity-bound}
  \operatorname{cap}_p(\overline{B}(x,r);M)\ge c_1r^{\beta-p}.
 \end{equation}
 In particular, every metric ball of positive radius has positive global
 $p$-capacity.
\end{lem}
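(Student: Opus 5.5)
We prove \eqref{ap:ball-capacity-bound} by the standard annular (dyadic) capacity estimate on a doubling metric-measure space, using only \eqref{ap:doubling-poincare} and the volume lower bound. Let $f$ be an admissible competitor in \eqref{ap:global-capacity}: $f\in W^{1,p}(M)$ with compact support and $f\ge1$ a.e.\ on $\overline{B}(x,r)$. Truncating at $0$ and $1$ lowers the energy, so we may assume $0\le f\le1$. Put $B_k=B(x,2^kr)$ for $k\ge0$. Then $f_{B_0}=1$. Since $f$ has compact support, $f_{B_k}\to0$ as $k\to\infty$, because $|B_k|\to\infty$ by the volume growth. Hence
\[
	1=\sum_{k\ge0}\bigl(f_{B_k}-f_{B_{k+1}}\bigr).
\]

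Each increment is controlled by doubling and the Poincar\'e inequality in \eqref{ap:doubling-poincare} with $s=p$:
\[
	|f_{B_k}-f_{B_{k+1}}|\le 2^n\fint_{B_{k+1}}|f-f_{B_{k+1}}|
	\le C\,2^{k}r\Bigl(\frac{1}{|B_{k+1}|}\int_{\lambda_PB_{k+1}}|\nabla f|^p\Bigr)^{1/p}.
\]
Write $E=\int_M|\nabla f|^p$. The key lower bound is on the volumes. Since $d(o,x)=R=r/\kappa$, the ball $B(x,2^kr+R)$ contains $B(o,2^kr)$, and for $k\ge0$ we have $2^kr+R\le(1+\kappa^{-1})2^kr$. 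Doubling then gives
\[
	|B_k|\ge c\,|B(o,2^kr)|\ge c\,c_0(2^kr)^\beta,
\]
with $c$ depending only on $\kappa$ and $n$, provided $r\ge t_0$. This is the role of $R_1$: we take $R_1\ge t_0/\kappa$.

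Summing the increments gives
\[
	1\le C E^{1/p}\sum_{k\ge0}(2^kr)^{1-\beta/p}=C' E^{1/p}r^{(p-\beta)/p}.
\]
The series converges precisely because $\beta>p$. Therefore $E\ge c_1r^{\beta-p}$, and taking the infimum over admissible $f$ proves \eqref{ap:ball-capacity-bound}.

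The main point to be careful about is applying the Poincar\'e inequality to balls that meet $\partial M$. This is legitimate because \eqref{ap:doubling-poincare} holds on the metric-measure space $M$ itself. No boundary condition on $f$ is needed, since competitors are unrestricted on $\partial M$.

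For the final assertion, every ball $\overline{B}(y,s)$ with $s>0$ contains a ball of the above form up to comparison. Alternatively, it contains a small ball that sits inside some distant large ball's competitor class after a chain argument. The simplest route is a second dyadic chain: run the same argument starting at radius $s$ around $y$. The terms with $2^ks<t_0+d(o,y)$ are finitely many, and each is still bounded by $C E^{1/p}$ with a constant depending on $s$ and $y$. The tail is handled as above, which again forces $E\ge c(s,y)>0$.
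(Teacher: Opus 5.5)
Your proof is correct and follows the paper's argument essentially step for step. The common steps are the translated volume bound $|B(x,t)|\ge c\,t^\beta$ for $t\ge r\ge t_0$ (from $B(o,t)\subset B(x,(1+\kappa^{-1})t)$ and doubling), the dyadic telescoping of the averages $f_{B_k}-f_{B_{k+1}}$ controlled by doubling and the Poincar\'e inequality (summable since $\beta>p$), and, for the final assertion, the same chain started at the given radius with finitely many exceptional terms. You should drop the first two ``alternatives'' in your last paragraph, since monotonicity of capacity runs the wrong way for them; the dyadic-chain route you actually adopt is exactly the paper's.
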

\begin{proof}
 Fix $x$ as in the statement.  If $t\ge r$, then $R\le\kappa^{-1}t$ and hence
 \[
  B(o,t)\subset B(x,R+t)\subset B(x,(1+\kappa^{-1})t).
 \]
 Iterating the doubling inequality a number of times depending only on
 $\kappa$ gives
 \begin{equation}\label{ap:translated-volume-lower}
  |B(x,t)|\ge c(n,\kappa)|B(o,t)|\ge c_2t^\beta,
  \qquad t\ge r,
 \end{equation}
 after increasing $R_1$ so that $r\ge t_0$.

 Let $f$ be admissible in~\eqref{ap:global-capacity}.  Set
 \[
  B_j=B(x,2^jr),\qquad
  f_j=\fint_{B_j}f,\qquad
  \mathcal{E}(f)=\int_M|\nabla f|^p\,\dd\mathcal{H}^n.
 \]
 Since $f\ge1$ almost everywhere on $B(x,r)$, we have $f_0\ge1$.
 Since $f$ has
 compact support and $|B_j|\to\infty$, H\"older's inequality gives
 $f_j\to0$.

 By doubling, H\"older's inequality, and the Poincar\'e inequality on
 $B_{j+1}$ (with its fixed enlargement if $\lambda_P>1$),
 \begin{align*}
  |f_j-f_{j+1}|
  &\le C\fint_{B_{j+1}}|f-f_{j+1}|\\
  &\le C\,2^jr
  \left(\fint_{B(x,\lambda_P2^{j+1}r)}|\nabla f|^p\right)^{1/p}\\
  &\le C(2^jr)^{1-\beta/p}\mathcal{E}(f)^{1/p},
 \end{align*}
 where the last line follows from
 \eqref{ap:translated-volume-lower}, applied at the enlarged radius.
 Telescoping from $f_0$ to $0$ and using $\beta>p$, we obtain
 \begin{align*}
  1
  &\le |f_0|
   \le\sum_{j=0}^\infty|f_j-f_{j+1}|\\
  &\le C\mathcal{E}(f)^{1/p}
       \sum_{j=0}^\infty(2^jr)^{1-\beta/p}
   \le Cr^{1-\beta/p}\mathcal{E}(f)^{1/p}.
 \end{align*}
 Raising this inequality to the power $p$ and then taking the infimum over
 $f$ proves~\eqref{ap:ball-capacity-bound}.

 For a fixed ball $B(x,r_0)$, the same calculation starts at $r_0$.
 There are only finitely many radii below
 $\max\{t_0,2d(o,x)\}$; the corresponding positive volumes contribute
 finitely many terms.  At all larger radii the translated lower volume
 estimate applies.  The complete series is therefore finite, which gives
 $\operatorname{cap}_p(\overline{B}(x,r_0);M)>0$.
\end{proof}

\subsection*{Exhaustion potentials and decay}

\begin{cor}[Application to Proposition~\ref{prop:p-nonparabolic}]
\label{ap:application}
 Let $n=3$, $p\in(1,2)$, and suppose that
 $|B(o,r)|\ge cr^{1+\alpha}$ for all sufficiently large $r$, where
 $\alpha>1$.  Then $(M,\partial M)$ is $p$-nonparabolic.  For every
 bounded smooth obstacle $\Omega$ as in Section~2, the mixed capacity
 potential $u_p$ exists, tends to zero at infinity, and satisfies
 \begin{equation*}
  u_p(x)\le C|x|^{-\frac{\alpha+1-p}{p-1}}
 \end{equation*}
 for all sufficiently large $|x|=d(o,x)$.
\end{cor}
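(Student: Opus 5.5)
The plan has three parts: nonparabolicity from Lemma~\ref{ap:ball-capacity}, existence of $u_p$ by an exhaustion, and the decay bound by comparing $u_p$ with the capacity of a distant ball. Throughout, set $\beta=1+\alpha$. Then $\beta>2>p$, so the hypothesis of Lemma~\ref{ap:ball-capacity} holds with $c_0=c$. By the last assertion of that lemma, every metric ball has positive global $p$-capacity. Since $\Omega$ contains a small ball, monotonicity of capacity and \eqref{ap:capacity-normalization} give $C_p(\partial_{\operatorname{int}}\Omega)>0$, which is $p$-nonparabolicity.

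For existence, use the transverse exhaustion $D_j$ from Section~2. On each $D_j\setminus\Omega$, minimize the $p$-energy among $W^{1,p}$ functions equal to $1$ on $\partial_{\operatorname{int}}\Omega$ and $0$ on $\Gamma_j$, with no trace imposed on $\partial M$. This gives mixed potentials $u_j$, which satisfy the Neumann condition weakly. The comparison principle (test with $(u_j-u_{j+1})_+$) shows that $u_j$ increases in $j$ and satisfies $0\le u_j\le1$. The energies $\int|\nabla u_j|^p$ decrease to $\operatorname{cap}_p(\Omega;M)$. Lemma~\ref{ap:local-estimates} gives local H\"older bounds, so $u_j\to u_p$ locally uniformly. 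The limit $u_p$ is weakly $p$-harmonic with the Neumann condition, equals $1$ on the initial face, and has energy equal to the capacity; this is the variational identification \eqref{eq:capacity-def}.

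The decay estimate, which also gives $u_p\to0$, comes from a sublevel-set argument. Fix $x$ with $R=d(o,x)$ large, set $r=R/(4\Lambda_H)$, and choose $R_1$ so large that $B(x,\Lambda_H r)$ is disjoint from $\Omega$. Let $m=\inf_{B(x,r)}u_p$. The Harnack inequality of Lemma~\ref{ap:local-estimates} gives $u_p\ge C_H^{-1}u_p(x)$ on $B(x,r)$. The truncation $f=\min\{u_j/ (C_H^{-1}u_p(x)),1\}$, for large $j$, or an equivalent limiting argument, is then admissible for $\operatorname{cap}_p(\overline B(x,r');M)$ with a slightly smaller radius $r'$. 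Its energy is bounded by that of the truncation.

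The main point is to estimate this energy. Truncating at level $s=C_H^{-1}u_p(x)$, the coarea formula and the constant-flux identity, exactly as in Lemma~\ref{lem:capacity}, give
\[
\int_{\{u_p<s\}}|\nabla u_p|^p=s\,A .
\]
Here $A=\operatorname{cap}_p(\Omega;M)$ is the constant flux. Hence the truncated competitor $f=\min\{u_p/s,1\}$ has energy $s^{-p}\cdot sA=s^{1-p}A$. Lemma~\ref{ap:ball-capacity} then gives
\[
c_1 r^{\beta-p}\le s^{1-p}A,
\qquad\text{so}\qquad
u_p(x)\le C\,R^{-\frac{\beta-p}{p-1}}=C\,d(o,x)^{-\frac{\alpha+1-p}{p-1}} .
\]
The step I expect to be delicate is justifying admissibility of $f$. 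It does not have compact support, so I would approximate it by $\min\{u_j/s,1\}$ with the finite potentials, which do have compact support. For this, $u_j\ge s$ on the ball is needed for large $j$, which I would obtain by running the Harnack inequality on $u_j$ directly, together with the uniform convergence $u_j\to u_p$ on $\overline{B}(x,r)$. The flux identity for $u_j$ on $\{u_j<s\}$ holds with flux $A_j\le A$, using the weak equation tested with $\min\{u_j,s\}$. This gives the bound with constants independent of $j$, and $u_p\to0$ at infinity follows.
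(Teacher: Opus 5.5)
Your proposal is correct and follows the paper's proof essentially step for step. The shared steps are: positive capacity of an interior ball via Lemma~\ref{ap:ball-capacity}; energy-minimizing mixed potentials on the transverse exhaustion; the strip identity $\int_{\{u_j<a\}}|\nabla u_j|^p=aA_j$, obtained by testing with $\min\{u_j,a\}-au_j$, which vanishes on both Dirichlet faces; the Harnack inequality on $B(x,\Lambda_H r)$ with $r=R/(4\Lambda_H)$; the compactly supported competitor $\min\{u_j/a,1\}$ for $\operatorname{cap}_p(\overline B(x,r);M)$; and finally $j\to\infty$.

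Two small corrections are needed. First, since the energies decrease to the capacity, the $j$-uniform bound is $A_j\le A_1$, not $A_j\le A$. Second, truncate at levels $a<C_H^{-1}u_j(x)\le\inf_{B(x,r)}u_j$ rather than at $C_H^{-1}u_p(x)$, because $u_j\le u_p$ need not exceed the latter on the ball for any finite $j$. With this level, $u_j(x)\le C_H\bigl(A_1/\operatorname{cap}_p(\overline B(x,r);M)\bigr)^{1/(p-1)}$ holds uniformly in $j$ and passes directly to the limit.
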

\begin{proof}
 Put $\beta=1+\alpha$.  Since $\alpha>1$ and $p<2$, we have $\beta>p$.
 Choose a closed ball contained in
 $\operatorname{int}(M)\cap\operatorname{int}(\Omega)$.  This ball has
 positive capacity by Lemma~\ref{ap:ball-capacity}; capacity monotonicity
 and~\eqref{ap:capacity-normalization} then give
 \begin{equation*}
  C_p(\partial_{\operatorname{int}}\Omega)>0.
 \end{equation*}

 To obtain the exhaustion, choose a smooth nonnegative proper exhaustion
 function $\rho$ on $M$.
 By Sard's theorem, one can choose $c_j\to\infty$ that are regular values
 both of $\rho$ and of $\rho|_{\partial M}$.  Fix a compact connected
 neighborhood $K\supset\Omega$ and take the component of
 $\{\rho<c_j\}$ containing $K$.  For $c_1$ sufficiently large these
 components form nested bounded Lipschitz domains $D_j$ exhausting $M$,
 and their smooth artificial boundaries are transverse to $\partial M$.
 Orthogonality at the outer edge is not required.  On
 $D_j\setminus\Omega$, let $v_j$ minimize the $p$-energy among functions
 whose trace is $1$ on $\partial_{\operatorname{int}}\Omega$ and $0$ on
 $\partial_{\operatorname{int}}D_j$; no trace is prescribed on the
 physical boundary.  The direct method gives a unique minimizer, whose
 Euler--Lagrange equation is the mixed weak problem with the natural
 homogeneous Neumann condition on $\partial M$.

 Extend $v_j$ as $1$ on $\Omega$ and as $0$ on $M\setminus D_j$, and set
 \[
  C_j=\int_M|\nabla v_j|^p\,\dd\mathcal{H}^3.
 \]
 The extensions belong to $W^{1,p}(M)$ because the prescribed traces agree
 across both artificial faces.  Truncation and comparison give
 $0\le v_j\le1$ and $v_j\le v_{j+1}$.  Moreover,
 \begin{equation}\label{ap:exhaustion-capacity-limit}
  C_j\downarrow\operatorname{cap}_p(\Omega;M).
 \end{equation}
 Indeed, enlarging the outer domain enlarges the admissible class, so
 $C_{j+1}\le C_j$, and every extended $v_j$ is admissible for the global
 capacity, so $C_j\ge\operatorname{cap}_p(\Omega;M)$.  Conversely, after
 truncation at the levels $0$ and $1$, any compactly supported global
 competitor equals $1$ almost everywhere on $\Omega$, is supported in $D_j$
 for all sufficiently large $j$, and is then admissible for the $j$-th
 problem.
 Taking first the infimum over $j$ and then over global competitors proves
 the reverse inequality in the limit.

 We next establish the energy identity used in the capacity comparison.
 For $0<a<1$, the Sobolev function
 \[
  \varphi=\min\{v_j,a\}-av_j
 \]
 has zero trace on both Dirichlet faces and is an admissible test function
 in the weak equation.  The Sobolev chain rule gives
 \[
  \nabla\varphi=
  \begin{cases}
   (1-a)\nabla v_j,&v_j<a,\\
   -a\nabla v_j,&v_j>a,
  \end{cases}
 \]
 almost everywhere; moreover, $\nabla v_j=0$ almost everywhere on the
 plateau $\{v_j=a\}$.  Therefore
 \[
  0=(1-a)\int_{\{v_j<a\}}|\nabla v_j|^p
    -a\int_{\{v_j>a\}}|\nabla v_j|^p.
 \]
 Adding $a\int_{\{v_j<a\}}|\nabla v_j|^p$ to both sides and using the
 definition of $C_j$ yields
 \begin{equation}\label{ap:truncated-strip-energy}
  \int_{\{v_j<a\}}|\nabla v_j|^p\,\dd\mathcal{H}^3=aC_j.
 \end{equation}

 Fix $x\in M$ with $R=d(o,x)$ sufficiently large and put
 $\kappa=(4\Lambda_H)^{-1}$ and $r=\kappa R$.
 Choose $j$ so large that the relative ball
 $\overline{B}(x,\Lambda_H r)$ is contained in $D_j$ and is disjoint from
 $\Omega$.  Thus this ball does not meet either artificial Dirichlet face.
 On the ball, $v_j$ is a nonnegative local $p$-energy minimizer; if the ball
 meets $\partial M$, its natural boundary condition is precisely the
 Neumann condition in Lemma~\ref{ap:local-estimates}.  Consequently,
 \begin{equation}\label{ap:harnack-at-x}
  v_j(x)\le C_Hm_j,
  \qquad m_j=\inf_{B(x,r)}v_j.
 \end{equation}
 Lemma~\ref{ap:local-estimates} also supplies the continuous representative
 used in this definition.

 If $m_j=0$, estimate~\eqref{ap:harnack-at-x} gives $v_j(x)=0$.  Suppose
 $m_j>0$ and fix $0<a<m_j$.  Continuity and compactness of
 $\overline{B}(x,r)$ show that $v_j>a$ on a neighborhood of this closed
 ball.  Hence the compactly supported Sobolev function
 \[
  f_{j,a}=\min\{v_j/a,1\}
 \]
 is admissible for
 $\operatorname{cap}_p(\overline{B}(x,r);M)$.  By
 \eqref{ap:truncated-strip-energy},
 \begin{align*}
  \operatorname{cap}_p(\overline{B}(x,r);M)
  &\le\int_M|\nabla f_{j,a}|^p\,\dd\mathcal{H}^3\\
  &=a^{-p}\int_{\{v_j<a\}}|\nabla v_j|^p\,\dd\mathcal{H}^3
   =a^{1-p}C_j.
 \end{align*}
 Letting $a\uparrow m_j$ gives
 \begin{equation}\label{ap:minimum-capacity-bound}
  m_j^{p-1}\le
  \frac{C_j}{\operatorname{cap}_p(\overline{B}(x,r);M)}.
 \end{equation}
 Since $C_j\le C_1$ and $r=\kappa R$, Lemma~\ref{ap:ball-capacity},
 \eqref{ap:harnack-at-x}, and~\eqref{ap:minimum-capacity-bound} imply
 \begin{equation}\label{ap:global-exhaustion-decay}
  v_j(x)
  \le C_H\left(
  \frac{C_j}{\operatorname{cap}_p(\overline{B}(x,r);M)}
  \right)^{1/(p-1)}
  \le CR^{-\frac{\beta-p}{p-1}}.
 \end{equation}
 The constant is independent of $x$ and of all sufficiently large $j$.

 We now pass to the global potential.  The monotone sequence $v_j$ is
 bounded between $0$ and $1$, so it has a pointwise limit $u_p$.  On every
 relative ball compactly contained in
 $\overline{M\setminus\Omega}$ and disjoint from the fixed inner Dirichlet
 face, the functions $v_j$ are local $p$-energy minimizers for all large
 $j$.  The local H\"older estimates for minimizers are uniform on smaller
 relative balls.  Arzel\`a--Ascoli and the monotone pointwise convergence
 therefore give local uniform convergence to $u_p$.  Caccioppoli's
 inequality gives weak $W^{1,p}$ compactness, and the standard
 strict-monotonicity argument for $z\mapsto|z|^{p-2}z$ upgrades this to
 strong $W^{1,p}_{\mathrm{loc}}$ convergence.  This is the usual Harnack
 convergence theorem for $p$-energy minimizers in the setting of
 \cite{KS01}.
 Hence the weak equation holds first for test functions supported a positive
 distance from the inner face.  By the standard density theorem for a
 Lipschitz domain with a designated Dirichlet face, functions with zero
 trace on that face can be approximated in $W^{1,p}$ by such test
 functions.  Since
 $|\nabla u_p|^{p-2}\nabla u_p\in L^{p/(p-1)}$, passage through this
 approximation gives
 \begin{equation}\label{ap:global-weak-equation}
  \int_{M\setminus\Omega}|\nabla u_p|^{p-2}
  \langle\nabla u_p,\nabla\varphi\rangle\,\dd\mathcal{H}^3=0
 \end{equation}
 for every compactly supported test function having zero trace on the inner
 Dirichlet face.  Because such test functions are free on $\partial M$,
 \eqref{ap:global-weak-equation} includes the homogeneous Neumann condition.

 It remains to identify the inner trace.  Fix a bounded Lipschitz
 neighborhood of the inner face.  The uniform energy bound and
 $0\le v_j\le1$ give weak $W^{1,p}$ compactness on this neighborhood.
 Pointwise convergence and $0\le v_j\le1$ also imply, by dominated
 convergence on this bounded neighborhood, that $v_j\to u_p$ strongly in
 $L^p$.  Thus every weak $W^{1,p}$ cluster point is the same function
 $u_p$.  The trace operator is bounded and linear, hence weakly continuous,
 and
 every $v_j$ has trace $1$ on
 $\partial_{\operatorname{int}}\Omega$.  It follows that $u_p$ has trace
 $1$ there.  This identifies only the Sobolev trace; the stronger
 pointwise and differentiable regularity at the fixed orthogonal
 Dirichlet--Neumann edge is supplied later by
 Lemma~\ref{lem:eps-regularization} and is not needed in the present
 exhaustion argument.
 Passing to the limit in~\eqref{ap:global-exhaustion-decay} gives
 \[
  u_p(x)\le C|x|^{-\frac{\beta-p}{p-1}}
  =C|x|^{-\frac{\alpha+1-p}{p-1}}.
 \]
 In particular, $u_p(x)\to0$ along every sequence with $|x|\to\infty$.
 To see uniqueness, let $u$ and $v$ be two solutions with these boundary
 data.  Given $\varepsilon>0$, their decay gives a compact set outside
 which $|u-v|\le\varepsilon$.  On a bounded domain containing this compact
 set, the comparison principle applied to $u$ and $v+\varepsilon$, and then
 to $v$ and $u+\varepsilon$, gives $|u-v|\le\varepsilon$.  Letting
 $\varepsilon\downarrow0$ proves $u=v$.

 Finally, weak lower semicontinuity on each compact member of an exhaustion,
 followed by monotone convergence of the domains and
 \eqref{ap:exhaustion-capacity-limit}, gives
 \begin{equation}\label{ap:energy-upper-capacity}
  \int_{M\setminus\Omega}|\nabla u_p|^p\,\dd\mathcal{H}^3
  \le\operatorname{cap}_p(\Omega;M).
 \end{equation}
 Conversely, fix $0<a<1$.  The decay just proved makes
 $(u_p-a)_+/(1-a)$ compactly supported.  It belongs to $W^{1,p}(M)$ after
 being extended as $1$ on $\Omega$, and it is admissible in
 \eqref{ap:global-capacity}.  Therefore
 \[
  \operatorname{cap}_p(\Omega;M)
  \le(1-a)^{-p}
  \int_{\{u_p>a\}}|\nabla u_p|^p\,\dd\mathcal{H}^3.
 \]
 The strong maximum principle gives $u_p>0$ on $M\setminus\Omega$.
 Letting $a\downarrow0$ and using monotone convergence proves the reverse
 inequality in~\eqref{ap:energy-upper-capacity}.  Thus
 \[
  \operatorname{cap}_p(\Omega;M)
  =\int_{M\setminus\Omega}|\nabla u_p|^p\,\dd\mathcal{H}^3,
 \]
 and~\eqref{ap:capacity-normalization} gives exactly the normalized energy
 identity stated in Section~2.
\end{proof}

\section{A non-flat Ricci-pinched manifolds with boundary}
	\label{sec:eps-appendix-c}
    In this appendix, we construct a complete Ricci-pinched $3$-manifold with convex boundary and Euclidean volume growth, but not second-fundamental-form-pinched.
    \begin{example}\label{ex:convex-boundary-counterexample}
		Let $\delta$ be the Euclidean metric on $\mathbb R^3$, and set
		\[
			M=\{(x,y,z):x\ge\sqrt{1+y^2+z^2}\},
			\qquad g=e^{2u}\delta,
			\qquad u=-\frac{1}{8r^4},
		\]
		where $r^2=x^2+y^2+z^2$.  Then $(M,g)$ is complete, has cubic
		volume growth and strictly convex boundary, and satisfies
		$\operatorname{Ric}_g\ge\frac18\mathrm R_g g>0$.
		Indeed, $r\ge x\ge 1$ and $e^{-1/4}\delta\le g\le\delta$ on the closed
		convex domain $M$, so its intrinsic distance is complete.  Moreover,
		$M$ is asymptotic to the solid Euclidean cone
		$\{x\ge\sqrt{y^2+z^2}\}$, and $e^u\to1$ at infinity; hence
		$(M,g)$ has cubic volume growth.

		The conformal change formula
		\[
			\operatorname{Ric}_g=-D^2u+du\otimes du
			-(\Delta_\delta u+|Du|_\delta^2)\delta
		\]
		yields the radial and tangential Ricci eigenvalues with respect to
		$g$,
		\[
			\mu_r=4e^{-2u}r^{-6},
			\qquad \mu_t=e^{-2u}\bigl(r^{-6}-\tfrac14r^{-10}\bigr),
		\]
		with $\mu_t$ of multiplicity two.  Since $r\ge1$,
		$\mu_t\ge\frac34e^{-2u}r^{-6}$ and
		$\mathrm R_g=\mu_r+2\mu_t\le6e^{-2u}r^{-6}$, proving the
		claimed Ricci pinching and strict positivity.

		On $\partial M$, the Euclidean inward unit normal is
		$\mathbf n_\delta=(x,-y,-z)/r$, the Euclidean principal curvatures
		are $r^{-3}$ and $r^{-1}$, and
		$\mathbf n_\delta(u)=1/(2r^7)$.  Thus
		$\operatorname{II}_g=e^u(\operatorname{II}_\delta-
		\mathbf n_\delta(u)\delta_{\partial M})$ gives the principal
		curvatures
		\[
			\kappa_1=e^{-u}\bigl(r^{-3}-\tfrac12r^{-7}\bigr)>0,
			\qquad
			\kappa_2=e^{-u}\bigl(r^{-1}-\tfrac12r^{-7}\bigr)>0.
		\]
		The boundary is therefore strictly convex, but
		$\kappa_1/(\kappa_1+\kappa_2)\to0$ as $r\to\infty$, so it
		satisfies no uniform second-fundamental-form pinching.
	\end{example}

\end{document}